\documentclass[10pt,reqno]{amsart}

\usepackage{enumerate}
\usepackage{amsmath, amssymb, amsthm,amsfonts}
\usepackage{mathrsfs}
\usepackage{esint}
\usepackage{xcolor}
\usepackage{mathtools}
\usepackage{hyperref}
\usepackage{bm}
\usepackage{accents}
\usepackage[foot]{amsaddr}
\usepackage{todonotes}

\makeatletter

\numberwithin{equation}{section}

\newtheorem{thm}{Theorem}[section]
\newtheorem{theorem}[thm]{Theorem}
\newtheorem{lemma}[thm]{Lemma}

\newtheorem{proposition}[thm]{Proposition}
\theoremstyle{definition}

\newtheorem{definition}[thm]{Definition}
\newtheorem{example}[thm]{Example}
\newtheorem{assumption}[thm]{Assumption}
\theoremstyle{remark}

\newtheorem{remark}[thm]{\bf{Remark}}

\newcommand\bD{\mathbb{D}}
\newcommand\bE{\mathbb{E}}

\newcommand\bH{\mathbb{H}}

\newcommand\bL{\mathbb{L}}

\newcommand\bN{\mathbb{N}}

\newcommand\bP{\mathbb{P}}

\newcommand\bR{\mathbb{R}}
\newcommand\bS{\mathbb{S}}

\newcommand\bZ{\mathbb{Z}}

\newcommand\fB{\mathbf{B}}

\newcommand\fH{\mathbf{H}}

\newcommand\cD{\mathcal{D}}
\newcommand\cE{\mathcal{E}}
\newcommand\cF{\mathcal{F}}

\newcommand\cL{\mathcal{L}}
\newcommand\cM{\mathcal{M}}
\newcommand\cN{\mathcal{N}}

\newcommand\cP{\mathcal{P}}

\newcommand\cR{\mathcal{R}}
\newcommand\cS{\mathcal{S}}
\newcommand\cT{\mathcal{T}}

\newcommand\frH{\mathfrak{H}}

\begin{document}

\title[]{The Neumann problem for stochastic partial differential equations}

\author{Kyeong-Hun Kim$^{1}$}
\address{$^1$ Department of Mathematics, Korea University, 145 Anam-ro, Seongbuk-gu, Seoul,
02841, Republic of Korea}
\email{kyeonghun@korea.ac.kr}

\author{Junhee Ryu$^{2}$}
\address{$^2$ School of Mathematics, Korea Institute for Advanced Study, 85 Hoegi-ro, Dongdaemun-gu, Seoul, 02455, Republic of Korea} \email{junhryu@kias.re.kr}
\thanks{K.-H. Kim was supported by the National Research Foundation of Korea(NRF) grant funded by the Korea government(MSIT) (No. RS-2025-00556160). J. Ryu was supported by a KIAS Individual Grant (MG101501) at Korea Institute for Advanced Study and by the National Research Foundation of Korea (NRF) grant funded by the Korea government (MSIT) (No. RS-2026-25477288).}

\subjclass[2020]{60H15, 35R60, 35B65, 35K60, 46E35}

\keywords{Stochastic partial differential equations, Neumann problem, Maximal $L_p$ regularity, H\"older estimates}

\begin{abstract}
In this paper, we study the Neumann problem for second-order stochastic partial differential equations in the half space. The coefficients are random and merely measurable in time. We develop a class of weighted function spaces for this problem, and establish existence, uniqueness, and regularity results. In contrast to the Dirichlet problem, the Neumann problem admits solutions with improved boundary regularity under suitable assumptions on the coefficients and data.
\end{abstract}

\maketitle

\section{Introduction}

Stochastic partial differential equations (SPDEs) arise as models of phenomena involving random effects, and their mathematical theory has developed over the past several decades.
In the case of equations defined on domains, boundary conditions play an important role in both solvability and the regularity of solutions.
In this paper, we study second-order SPDEs with homogeneous Neumann boundary conditions in the half space. More precisely, we consider
\begin{equation} \label{maineq}
\begin{cases}
du = \left(a^{ij}(t)D_{ij}u+f(u)\right)dt + \left(\sigma^{ik}(t)D_iu+g^k(u)\right)dw_t^k,\ t>0, x\in\bR^d_+
,\\
D_du|_{x_d=0}=0,
\\
u(0,x)=u_0.
\end{cases}
\end{equation}
Here, $\bR^d_+:=\{x\in\bR^d:x_d>0\}$ is the upper half space, $\{w_t^k\}_{k\in\bN}$ is a sequence of independent one-dimensional Wiener processes, and $D_du$ denotes the derivative of $u$ in the normal direction. The leading coefficients $a^{ij}$ and $\sigma^{ik}$ are predictable in $(t,\omega)$, possibly random and merely measurable in time, and satisfy the stochastic parabolicity condition.
The indices $i$ and $j$ go from $1$ to $d$ and $k$ runs through $\{1,2,\dots\}$. Einstein's summation convention on $i,j$, and $k$ is assumed throughout the paper.
The terms $f(u)$ and $g(u)$ are semi-linear in $u$. These conditions are stated precisely in Assumptions \ref{as_coeff} and \ref{as_semi}.

Our goal is to develop a weighted Sobolev framework for the Neumann problem that covers data of arbitrary real regularity, and to establish existence, uniqueness, and regularity results in this framework. 
Combined with the Sobolev-H\"older embedding, our estimates yield improved boundary regularity compared with the corresponding Dirichlet theory. For any $\varepsilon\in(0,1)$ and sufficiently regular data, the solution is spatially $C^{2-\varepsilon}$ up to the boundary when $\sigma^{ik}\equiv0$, and $C^{1-\varepsilon}$ up to the boundary in the presence of the gradient term $\sigma^{ik}D_iu$ in the stochastic part.

This is in contrast to the Dirichlet problem. In \cite{K94}, it was shown that even for smooth deterministic $g$, the second spatial derivatives of the solution to the homogeneous Dirichlet problem may not be square integrable.
To overcome this difficulty, the problem was formulated in weighted Sobolev spaces, where derivatives of the solution are allowed to become singular near the boundary. We refer the reader to \cite{K04,Kry_halfspace,KLline}.
In this setting, a weighted Sobolev-H\"older embedding theorem provides at best spatially $C^{1-\varepsilon}$ regularity up to the boundary.
In the presence of stochastic gradient terms, the situation for the Dirichlet problem is even worse. In one dimension, the author of \cite{K03} showed that the boundary H\"older exponent can be arbitrarily small even for a linear equation with constant coefficients and smooth initial data. See Remark \ref{rem1013} for more details.
A similar phenomenon is expected in higher dimensions, although the optimal regularity remains open. Nevertheless, in \cite{M19}, some positive H\"older regularity was obtained.

There have been relatively few studies of SPDEs with Neumann boundary conditions. In \cite{SV03}, the authors considered non-autonomous semi-linear equations driven by spatially colored noise, with conormal boundary conditions on smooth bounded convex domains. They established existence and uniqueness of solutions, together with $L_2$-based Sobolev regularity.
We also refer the reader to \cite[Example 8.2]{V10}, where stochastic maximal $L_2$-regularity was derived for related conormal problems.
In \cite{NVW12}, stochastic maximal $L_p$-regularity was established using the bounded $H^\infty$-calculus. This theory yields $W^{1,q}$ estimates for stochastic convolutions generated by the Neumann Laplacian.
Corresponding mixed-norm estimates with temporal weights were obtained for time-dependent conormal problems in \cite[Example 8.17]{LV21}.

The theory for Neumann problems involving the gradient term $\sigma^{ik}D_iu$ in the stochastic part is less developed. In \cite{GS24}, the authors treated fully nonlinear Neumann SPDEs in a viscosity framework with spatially homogeneous noise Hamiltonians $H^k(Du)$. Their analysis does not cover general spatially dependent semi-linear noise terms of the form $g^k(t,x,u)$.
See also \cite{CT16} for singular diffusion equations with gradient Stratonovich noise under geometric assumptions.

In this paper, we deal with \eqref{maineq} in an $L_p$-Sobolev framework in which the data $f$, $g$, and $u_0$ may have arbitrary real regularity and the noise coefficients $\sigma^{ik}$ are allowed to be nonzero.
Each of these two points presents a different difficulty.
The first difficulty is related to the Neumann boundary condition. If $u\in W_p^s(\bR^d_+)$, then the trace of $D_du$ is well defined only when $s>1+1/p$.
For deterministic equations, the Neumann problem has been studied with the boundary condition understood in the trace or weak sense. See \cite{DHP07,DL21,DL22,FMM98,H21,DKZ16,KK22,KN14,LLRV25}.
In our setting, however, the trace of $D_du$ may not exist, since we aim to cover arbitrary real regularity. We instead use a weighted condition on $D_du$, which gives a formulation of the Neumann condition without a boundary trace.
The second difficulty is related to the stochastic gradient term. On the half space, this term interacts with the boundary and leads to a more restrictive weighted theory than in the case $\sigma^{ik}\equiv0$.

To address the first difficulty, we use the weighted Sobolev spaces $H^\gamma_{p,\theta}(\bR^d_+)$ introduced in \cite{Khalf} as base spaces. Here, $p\in(1,\infty)$ is the integrability exponent, $\gamma\in\bR$ is the regularity index, and $\theta\in\bR$ determines the weight. When $\gamma$ is a nonnegative integer, an equivalent norm is given by
\begin{equation*}
\|u\|_{H_{p,\theta}^\gamma} = \sum_{m=0}^{\gamma}  \left(\int_{\bR^d_+} |x_d^mD_x^mu(x)|^p x_d^{\theta-d} dx\right)^{1/p}.
\end{equation*}
The weights $x_d^m$ allow derivatives of higher order to be more singular near the boundary, while $x_d^{\theta-d}$ controls their integrability near the boundary. This space can be generalized in a unified manner for general $\gamma\in\bR$ (see Section \ref{sec_weighted}). In particular, $H_{p,\theta}^2(\bR^d_+)$ controls $u$, $x_dD_xu$, and $x_d^2D_x^2u$ in $L_{p,\theta}(\bR^d_+) = L_p(\bR^d_+,x_d^{\theta-d}dx)$.
In contrast, the classical weighted second-order formulation controls $u$, $D_xu$, and $D_x^2u$ in this same base space.
Accordingly, we place $u$, $D_xu$, and $D_x^2u$ in the same space $H^\gamma_{p,\theta}$ and consider solutions satisfying
\begin{equation*}
u,D_xu,D_x^2u \in \bH^\gamma_{p,\theta}(\tau),\qquad x_d^{-1}D_du \in \bH^{\gamma+1}_{p,\theta}(\tau).
\end{equation*}
Here, $\bH_{p,\theta}^{\gamma}(\tau) := L_p((0,\tau];H_{p,\theta}^{\gamma}(\bR^d_+))$, where $\tau$ is a bounded stopping time and $(0,\tau]:=\{(\omega,t): 0<t\leq\tau(\omega)\}$.
When $\gamma=0$ and $\theta=d$, the first three conditions correspond to the standard $W_p^2$ regularity.
The regularity in these spaces can be understood as follows. Away from the boundary, we obtain the usual local Sobolev regularity of order $\gamma+2$. Near the boundary, higher derivatives are instead controlled in the weighted structure of $H^\gamma_{p,\theta}$.

The condition $x_d^{-1}D_du(t,\cdot) \in H^{\gamma+1}_{p,\theta}$ reflects the homogeneous Neumann boundary condition. For instance, if $u\in C_c^\infty(\overline{\bR^d_+})$ and $d-1<\theta<d-1+p$, then the condition $x_d^{-1}D_du \in L_{p,\theta}(\bR^d_+)$ is equivalent to $D_du(x',0)=0$. 
This observation leads us to impose a weighted condition on the
normal derivative for every $\gamma\in\bR$, without imposing any condition on the boundary
value of the solution. When the regularity and weight parameters are chosen as in Remark \ref{rem5250111} $(ii)$, the normal derivative extends continuously to the boundary and vanishes there, so the Neumann condition holds in the classical sense.

For the second difficulty, recall that on the whole space the stochastic gradient term can be removed by a stochastic change of variables. On the half space, however, this argument is not suitable when $\sigma^{ik}\neq0$, since the change of variables transforms the half space into a moving random domain. We instead observe that, for the linear equation, $D_du$ satisfies a Dirichlet SPDE with differentiated data. The weighted Dirichlet theory in \cite{Kry_halfspace} then provides estimates for $x_d^{-1}D_du$, while tangential derivatives are estimated by whole-space SPDE estimates in the tangential variables.

In this way, we overcome both difficulties and establish existence, uniqueness, and an a priori estimate for every $\gamma\in\bR$ and $p\in[2,\infty)$ on bounded random time intervals. For the precise statement, we refer the reader to Theorem \ref{thm_main}. Moreover, boundary and interior H\"older estimates are stated in Theorems \ref{thm_bdry} and \ref{thm_int}.

The paper is organized as follows.
In Section \ref{sec_main}, we introduce the solution spaces and state the main results.
In Section \ref{sec_funct}, we establish the interpolation and lifting properties needed to treat arbitrary real regularity and compare our spaces with weighted Triebel-Lizorkin spaces.
In Section \ref{sec_simple}, we construct solutions for equations with the Laplacian using the Neumann heat kernel and establish the corresponding estimates.
In Section \ref{sec_apriori}, we prove the a priori estimates for general coefficients.
Finally, in Section \ref{sec_proof}, we complete the proofs of the main results. The solvability for general coefficients follows from the method of continuity, the semi-linear terms are handled by an iteration argument, and the H\"older regularity up to the boundary is obtained from the comparison with weighted Triebel-Lizorkin spaces.

We finish this section by introducing some notation used throughout the paper. We use ``$:=$'' or ``$=:$'' to denote a definition. By $\bN$ and $\bZ$, we denote the natural numbers and integers, respectively. We write $\bN_0:=\bN\cup\{0\}$. As usual, $\bR^d$ stands for the Euclidean space of points $x=(x_1,\dots,x_d)=(x',x_d)$. We also write $\bR:=\bR^1$ and $\bR_+:=\bR^1_+=(0,\infty)$.
We use $D_x^nu$ to denote the partial derivatives of order $n\in\bN_0$ with respect to the space variables, and $D_xu:=D_x^1u$. We also write
\begin{equation*}
D_iu=\frac{\partial u}{\partial x_i},\qquad D_{ij}u=\frac{\partial^2u}{\partial x_i\partial x_j}.
\end{equation*}
For an open set $D\subset\bR^d$, by $C_c^\infty(D)$ we denote the space of infinitely differentiable functions with compact support in $D$. We also write $C_c^\infty(\overline{D})$ for the set of all infinitely differentiable functions on $D$ whose derivatives of all orders extend continuously to $\overline{D}$ and whose support is compact in $\overline{D}$.
By $\cD'(\bR^d_+)$, we denote the space of all distributions on $\bR^d_+$. For $u\in\cD'(\bR^d_+)$ and $\phi\in C_c^\infty(\bR^d_+)$, we write
\begin{equation*}
(u,\phi):=(u,\phi)_{\bR^d_+}:=u(\phi).
\end{equation*}

\section{Preliminaries and main results} \label{sec_main}

In this section, we introduce the function spaces used to study \eqref{maineq}. More specifically, in Section \ref{sec_weighted}, we present the basic settings and the solution space. Then the main results are stated in Section \ref{sec_res}.

Throughout this paper, we fix a complete probability space $(\Omega,\cF,\bP)$ equipped with a filtration $\{\cF_t, t\geq0\}$ satisfying the usual conditions.
Let $\cP$ denote the predictable $\sigma$-field generated by $\cF_t$, that is, $\cP$ is the smallest $\sigma$-field containing all sets of the form $A\times (s,t]$ where $s<t$ and $A\in \cF_s$. Let $\{w^k_t\}_{k\in\bN}$ be a sequence of  independent one-dimensional Wiener processes adapted to the filtration $\{\cF_t, t\geq0\}$.

\subsection{Weighted Sobolev spaces} \label{sec_weighted}

We first present the definitions of Sobolev and Besov spaces on $\bR^d$. For $p\in(1,\infty)$ and $\gamma\in\bR$, the Sobolev space $H_p^\gamma=H_p^\gamma(\bR^d)$ is defined as the set of all tempered distributions $u\in \cS'(\bR^d)$ such that
\begin{equation*}
    \|u\|_{H_p^\gamma}=\|(1-\Delta)^{\gamma/2}u\|_{L_p}= \left\| \cF^{-1} \left\{ (1+|\cdot|^2)^{\gamma/2}\cF(u)(\cdot) \right\} \right\|_{L_p} <\infty.
\end{equation*}
Here, $\cF$ and $\cF^{-1}$ denote the standard Fourier transform and its inverse on $\bR^d$, respectively. In particular, when $\gamma\in\bN$,
$$
H_p^\gamma=W_p^\gamma:=\{u: D_x^\beta u \in L_p, |\beta|\leq \gamma\}.
$$
We also consider the $l_2$-valued Sobolev space $H_p^\gamma(l_2)=H_p^\gamma(\bR^d;l_2)$, consisting of all $l_2$-valued distributions $v$ on $\bR^d$ satisfying
$$
\|v\|_{H_p^\gamma(l_2)}:=\| |(1-\Delta)^{\gamma/2}v|_{l_2} \|_{L_p}<\infty,
$$
where $l_2$ is the set of all sequences $a=(a^1,a^2,\cdots)$ such that
$$
|a|_{l_2}:=\left(\sum_{k=1}^\infty |a^k|^2 \right)^{1/2}<\infty.
$$
Now we introduce the Besov space as a suitable space for the initial data. Let us choose a function $\Psi$ whose Fourier transform $\cF(\Psi)$ is infinitely differentiable, supported in an annulus $\{\xi\in\bR^d : \frac{1}{2} \leq |\xi| \leq 2\}$, $\cF(\Psi)\geq0$, and
$$
\sum_{j\in\bZ} \cF(\Psi)(2^{-j}\xi)=1, \quad \forall \xi\neq0.
$$
For $u\in\cS'(\bR^d)$ and $j\in \bZ$, define
\begin{equation} \label{eq4070111}
    \Delta_j u(x):= \cF^{-1}\left( \cF(\Psi)(2^{-j}\cdot) \cF(u) \right)(x), \quad S_0 u(x) :=  \cF^{-1}\left( \chi \cF(u)\right)(x),
\end{equation}
where $\chi(\xi) := 1-\sum_{j=1}^\infty \cF(\Psi)(2^{-j}\xi)$.
Then for $p>1$ and $\gamma\in\bR$, the Besov space $B_{p,p}^\gamma=B_{p,p}^\gamma(\bR^d)$ is the set of all $u\in \cS'(\bR^d)$ such that
\begin{equation*}
    \|u\|_{B_{p,p}^\gamma}:=\|S_0u\|_{L_p} + \left( \sum_{j=1}^\infty 2^{j p \gamma} \|\Delta_j u\|_{L_p}^p \right)^{1/p} <\infty.
\end{equation*}

From now on, we consider function spaces defined on domains. We recall the definition of $H_{p,\theta}^\gamma$, which was introduced in \cite{Khalf} to deal with homogeneous Dirichlet boundary conditions.
For $p\in(1,\infty)$, $\theta,\gamma\in\bR$, and $\zeta(x_d)\in C_c^\infty(\bR_+)$ such that $\zeta\geq0$ and
\begin{equation} \label{eq5062057}
    \sum_{m\in\bZ} \zeta^p(e^{x-m})\geq c>0, \quad x\in \bR,
\end{equation}
we denote by $H_{p,\theta}^\gamma$ the space of distributions $u\in \cD'(\bR^d_+)$ such that
\begin{align} \label{defH}
\|u\|^p_{H_{p,\theta}^\gamma}:= \sum_{m\in\bZ} e^{m\theta} \| u(e^m\cdot) \zeta\|^p_{H_p^\gamma}<\infty.
\end{align}
In particular, when $\gamma\in\bN_0$,
\begin{equation*}
H_{p,\theta}^{\gamma} = \{ u\in \cD'(\bR^d_+): u, x_dD_xu,\cdots, x_d^\gamma D_x^{\gamma}u \in  L_{p,\theta}\},
\end{equation*}
where $L_{p,\theta}:=H_{p,\theta}^0=L_p(\bR^d_+,x_d^{\theta-d}dx)$ (see \cite[Corollary 2.3]{Khalf}). Thus, \eqref{defH} is a unified definition of these spaces for arbitrary $\gamma\in\bR$.
The function space can be extended to the $l_2$-valued function space $H_{p,\theta}^\gamma(l_2)$, which is defined as
\begin{equation*}
    \|u\|^p_{H_{p,\theta}^\gamma(l_2)}:= \sum_{m\in\bZ} e^{m\theta} \| u(e^m\cdot)\zeta\|^p_{H_p^\gamma(l_2)}<\infty.
\end{equation*}
 We remark that both $H_{p,\theta}^\gamma$ and $H_{p,\theta}^\gamma(l_2)$ are independent of the choice of $\zeta$ (see \cite[Lemma 1.4]{Khalf}). Similar to $L_{p,\theta}:=H_{p,\theta}^0$, when $\gamma=0$, we denote  $L_{p,\theta}(l_2):= H_{p,\theta}^{0}(l_2)$.
Some other properties of $H_{p,\theta}^{\gamma}$ are presented in Section \ref{sec_funct}.

Let us consider the deterministic equation
\begin{equation*}
\partial_t u = \Delta u + f, \quad t>0, x\in \bR^d_+,
\end{equation*}
with the Neumann condition $D_du(x',0)=0$. The classical theory is usually formulated in the setting where the solution $u$ belongs to
\begin{equation*}
W_p^2(\bR^d_+):= \{u\in L_p(\bR^d_+): u,D_xu, D_x^2u \in L_p(\bR^d_+) \},
\end{equation*}
and $f\in L_p(\bR^d_+)$. This framework can be extended to the weighted setting 
$$
W_p^2(\bR^d_+,x_d^{\theta-d}):=\{u\in L_{p,\theta}: u,D_xu,D_x^2u \in L_{p,\theta}\}.
$$
Our goal is to study stochastic equation \eqref{maineq} within this framework. As a further extension, in order to handle data of arbitrary regularity, we introduce
\begin{equation*}
    H_{p,\theta}^{\gamma,(n)} := \{u: u, D_xu, \cdots, D_x^n u\in H_{p,\theta}^{\gamma-n}\}, \quad n\in \bN_0,
\end{equation*}
equipped with the norm
\begin{equation*}
    \|u\|_{H_{p,\theta}^{\gamma,(n)}} := \sum_{k=0}^{n} \|D_x^ku\|_{H_{p,\theta}^{\gamma-n}}.
\end{equation*}
Similarly, we define the $l_2$-valued function space $H_{p,\theta}^{\gamma,(n)}(l_2)$.
We remark that $H_{p,\theta}^{n,(n)}$ coincides with the classical weighted Sobolev space
\begin{equation*}
    W_{p}^{n}(\bR^d_+,x_d^{\theta-d}):=\{u\in L_{p,\theta}: u,\cdots,D_x^nu \in L_{p,\theta} \}.
\end{equation*}

Next, we introduce stochastic Banach spaces.
Let us denote 
\begin{eqnarray*}
    \bH_{p,\theta}^{\gamma,(n)}(\tau):= L_p((0,\tau],\cP;H_{p,\theta}^{\gamma,(n)})
\end{eqnarray*}
and
\begin{equation*}
    \fH_{p,\theta}^{\gamma,(n)} = L_{p}(\Omega,\cF_0; H_{p,\theta}^{\gamma,(n)}).
\end{equation*}
In the case when $n=0$, we simply write $\bH_{p,\theta}^{\gamma}(\tau):=\bH_{p,\theta}^{\gamma,(0)}(\tau)$ and $\fH_{p,\theta}^{\gamma} := \fH_{p,\theta}^{\gamma,(0)}$. When $\gamma=n=0$, we also write $\bL_{p,\theta}(\tau):=\bH_{p,\theta}^0(\tau)$.
The $l_2$-valued function space $\bH_{p,\theta}^{\gamma,(n)}(\tau,l_2)$ is defined in the usual way.

For a sufficiently nice function $u$, say $u\in C_c^\infty(\overline{\bR^d_+})$, the condition $u(x',0)=0$ is equivalent to $x_d^{-1}u \in L_{p,\theta}$, provided that $\theta\in(d-1,d-1+p)$ (see e.g. \cite{K08nonzero,KKW24}).
Motivated by this observation, we introduce a space that provides a unified framework for the Neumann problem for all regularity indices $\gamma\in\bR$. For $\gamma\in\bR$, $p\in(1,\infty)$, and $\theta\in\bR$, we denote
\begin{equation*}
    \widetilde{H}_{p,\theta}^{\gamma+2,(2)} := \{u\in H_{p,\theta}^{\gamma+2,(2)}: x_d^{-1}D_du\in H_{p,\theta}^{\gamma+1}\},
\end{equation*}
where the norm on this space is defined by
\begin{equation*}
    \|u\|_{\widetilde{H}_{p,\theta}^{\gamma+2,(2)}} := \|u\|_{H_{p,\theta}^{\gamma+2,(2)}} + \|x_d^{-1}D_du\|_{H_{p,\theta}^{\gamma+1}}.
\end{equation*}
We also define
\begin{equation*}
    \widetilde{\fH}_{p,\theta}^{\gamma+2,(2)} = L_{p}(\Omega,\cF_0; \widetilde{H}_{p,\theta}^{\gamma+2,(2)}), \quad \widetilde{\bH}_{p,\theta}^{\gamma+2,(2)}(\tau) := L_p((0,\tau],\cP;\widetilde{H}_{p,\theta}^{\gamma+2,(2)}).
\end{equation*}

We now discuss the space for the initial data. Let $T>0$ be nonrandom, and $X_0\subset X_1$ be Banach spaces. Suppose that $u\in L_p((0,T);X_0)$ and $\partial_t u \in L_p((0,T);X_1)$. Then it is known that the space for initial data is characterized by real interpolation:
\begin{equation*}
    u(0,\cdot) \in (X_0,X_1)_{1/p,p},
\end{equation*}
where $(A,B)_{\vartheta,q}$ is the real interpolation space of $A$ and $B$ with parameters $\vartheta\in(0,1)$ and $q\in(0,\infty)$ (see e.g. \cite[Definition 2.4]{Triebel1} for the precise definition, and \cite{W95, LV20, CLSW26} for this characterization).
Thus, we consider
\begin{equation*}
   U_{p,\theta}^{\gamma+2} = (\widetilde{\fH}_{p,\theta}^{\gamma+2,(2)},\fH_{p,\theta}^{\gamma})_{1/p,p}.
\end{equation*}
 In particular, by \cite[Theorem 2.2.10]{HNVW16},
 \begin{equation*}
     U_{p,\theta}^{\gamma+2} = L_{p}\left(\Omega,\cF_0; (\widetilde{H}_{p,\theta}^{\gamma+2,(2)},H_{p,\theta}^{\gamma})_{1/p,p}\right).
 \end{equation*}

We next introduce the distributional formulation of the stochastic equation.

\begin{definition}
    Let $\tau$ be a stopping time, $D\subset \bR^d$ be an open set, $u_0$ be a $\cD'(D)$-valued $\cF_0$-measurable function defined on $\Omega$, and $u,f$, and $g^k$ be $\cD'(D)$-valued predictable functions defined on $(0,\tau]$.
    Suppose that for any $\phi\in C_c^\infty(D)$,
    \begin{equation*}
        (f,\phi) \in L_1((0,\tau]), \quad |(g^k,\phi)|_{l_2} \in L_2((0,\tau]).
    \end{equation*}
    Then we say that $u$ satisfies
\begin{equation} \label{sense}
    du = fdt + g^kdw_t^k, \quad t\in (0,\tau], x\in D; \quad u(0,\cdot)=u_0
\end{equation}
in the sense of distributions if for any $\phi\in C_c^\infty(D)$,
\begin{equation*}
    (u(t,\cdot),\phi) = (u(0,\cdot),\phi) +  \int_0^t (f(s,\cdot),\phi) ds + \int_0^t (g^k(s,\cdot),\phi) dw_s^k
\end{equation*}
holds for all $t\leq \tau$ (a.s.).
\end{definition}

Now we introduce the solution space for the SPDEs studied in this paper.

\begin{definition}
    We say that $u\in \frH_{neu}^{\gamma+2}(p,\theta,\tau)$ if $u \in \widetilde{\bH}_{p,\theta}^{\gamma+2,(2)}(\tau)$ and there are $u_0\in U_{p,\theta}^{\gamma+2}$, $f\in \bH_{p,\theta}^\gamma(\tau)$, and $g\in {\bH}_{p,\theta}^{\gamma+1,(1)}(\tau,l_2)$ such that $u$ satisfies \eqref{sense} in the sense of distributions. In this case, we write $\bD u:= f$ and $\bS u:=g$. Moreover,
\begin{align} \label{def_norm}
    \|u\|_{\frH_{neu}^{\gamma+2}(p,\theta,\tau)} &:= \|u\|_{\widetilde{\bH}_{p,\theta}^{\gamma+2,(2)}(\tau)} + \|u_0\|_{U_{p,\theta}^{\gamma+2}} + \|\bD u\|_{\bH_{p,\theta}^{\gamma}(\tau)} + \|\bS u\|_{{\bH}_{p,\theta}^{\gamma+1,(1)}(\tau,l_2)}.
\end{align}

We then define a solution to \eqref{maineq} in the space $\frH_{neu}^{\gamma+2}(p,\theta,\tau)$. We say that $u\in \frH_{neu}^{\gamma+2}(p,\theta,\tau)$ is a solution to \eqref{maineq} if $u$ satisfies
\begin{equation*}
    du = \left(a^{ij}(t)D_{ij}u+f(u)\right)dt + \left(\sigma^{ik}(t)D_iu+g^k(u)\right)dw_t^k
\end{equation*}
in the sense of distributions.
\end{definition}

\subsection{Main results} \label{sec_res}

In this subsection, we introduce our main results. First, we present the assumptions on the coefficients and semi-linear conditions for $f$ and $g$.

\begin{assumption} \label{as_coeff}
    There exist constants $\delta_0,\delta_1\in(0,1]$ such that for any $\omega\in\Omega$, $t\in(0,T)$, and $\xi\in \bR^d$,
    \begin{equation*}
        \delta_0|\xi|^2 \leq \delta_1 a^{ij}(t) \xi_i \xi_j \leq \overline{a}^{ij}(t) \xi_i \xi_j \leq a^{ij}(t) \xi_i \xi_j \leq \delta_0^{-1}|\xi|^2,
    \end{equation*}
    where
    \begin{equation*}
        \overline{a}^{ij}(t):= a^{ij}(t) - \alpha^{ij}(t), \quad \alpha^{ij}(t):= \frac{1}{2}\sigma^{ik}(t) \sigma^{jk}(t).
    \end{equation*}
\end{assumption}

\begin{assumption} \label{as_semi}
$(i)$ For any $u \in \widetilde{H}_{p,\theta}^{\gamma+2,(2)}$, $f(t,x,u)$ and $g(t,x,u)$ are predictable functions taking values in $H_{p,\theta}^{\gamma}$ and $H_{p,\theta}^{\gamma+1,(1)}(l_2)$, respectively. Moreover,
$$
f(0)\in \bH_{p,\theta}^{\gamma}(\tau), \quad g(0) \in \bH_{p,\theta}^{\gamma+1,(1)}(\tau,l_2).
$$

$(ii)$ For any $\varepsilon>0$, there exists a constant $N_\varepsilon>0$ such that for any $\omega,t$, and $u,v\in \widetilde{H}_{p,\theta}^{\gamma+2,(2)}$,
\begin{align} \label{eq4221624}
&\|f(t,\cdot,u)-f(t,\cdot,v)\|_{H_{p,\theta}^{\gamma}}^p  + \|g(t,\cdot,u)-g(t,\cdot,v)\|_{{H}_{p,\theta}^{\gamma+1,(1)}(l_2)}^p \nonumber
\\
&\leq \varepsilon \left( \|D_x^2u-D_x^2v\|^p_{H_{p,\theta}^{\gamma}} + \|x_d^{-1}D_du-x_d^{-1}D_dv\|^p_{H_{p,\theta}^{\gamma+1}} \right)+ N_\varepsilon \|u-v\|_{H_{p,\theta}^{\gamma}}^p.
\end{align}
\end{assumption}

\begin{remark}
Clearly, by the definition of $\widetilde{H}_{p,\theta}^{\gamma+2,(2)}$, the right-hand side of \eqref{eq4221624} is finite.
\end{remark}

\begin{example}
We present simple examples in both the linear and nonlinear settings. Further examples can be constructed following \cite[Section 5]{Kry99}. We also refer the reader to \cite{KKL19}.

$(i)$ The linear case is included by taking
    \begin{equation*}
        f(u) := b_i D_{i}u + cu + f_0, \quad g(u) := \nu u + g_0,
    \end{equation*}
    where $b_i$ and $c$ are bounded predictable functions on $(0,\tau]$, $\nu$ is an $l_2$-valued bounded predictable function on $(0,\tau]$, $f_0\in \bH_{p,\theta}^{\gamma}(\tau)$, and $g_0\in {\bH}_{p,\theta}^{\gamma+1,(1)}(\tau,l_2)$. To show $f$ and $g$ satisfy Assumption \ref{as_semi}, it suffices to prove \eqref{eq4221624}. By a straightforward calculation,
    \begin{align*}
        &\|f(t,\cdot,u)-f(t,\cdot,v)\|_{H_{p,\theta}^{\gamma}}^p + \|g(t,\cdot,u)-g(t,\cdot,v)\|_{H_{p,\theta}^{\gamma+1,(1)}(l_2)}^p \nonumber
        \\
        &\leq N \left(\|D_xu-D_xv\|^p_{H_{p,\theta}^{\gamma}} + \|u-v\|^p_{H_{p,\theta}^{\gamma}} \right).
    \end{align*}
    Thus, it remains to show that for $w\in \widetilde{H}_{p,\theta}^{\gamma+2,(2)}$,
    \begin{equation} \label{eq5131329}
        \|D_xw\|^p_{H_{p,\theta}^{\gamma}} \leq N\|w\|_{H_{p,\theta}^{\gamma}}^{p/2} \left(\|x_d^{-1}D_dw\|_{H_{p,\theta}^{\gamma+1}}^{p/2} + \|D_x^2w\|_{H_{p,\theta}^{\gamma}}^{p/2} \right).
    \end{equation}
By \eqref{eq5071804} with $(\kappa,\theta_1,\theta_2,\gamma_1,\gamma_2) = (1/2,\theta-p,\theta+p,\gamma+1,\gamma-1)$,
\begin{align*}
    \|D_dw\|^p_{H_{p,\theta}^{\gamma}} \leq N \|x_d^{-1}D_dw\|_{H_{p,\theta}^{\gamma+1}}^{p/2} \|x_dD_dw\|_{H_{p,\theta}^{\gamma-1}}^{p/2}.
\end{align*}
Thus, when $D_x=D_d$, \eqref{eq5131329} follows from \eqref{equiv_der}. For $D_{x'}w$, we use Gagliardo-Nirenberg interpolation inequality, whose classical version states that
\begin{equation*}
    \|D_iw\|_{L_p(\bR^d)} \leq N\|w\|_{L_p(\bR^d)}^{1/2}\|D_i^2w\|_{L_p(\bR^d)}^{1/2}.
\end{equation*}
By considering $(1-\Delta)^{\gamma/2}w$ instead of $w$, the above inequality also holds in $H_p^{\gamma}(\bR^d)$. Using this and H\"older's inequality,
    \begin{align*}
        \|D_iw\|^p_{H_{p,\theta}^{\gamma}} &= \sum_{n\in\bZ} e^{n\theta} \|(D_iw)(e^n\cdot)\zeta\|_{H_p^{\gamma}}^p = \sum_{n\in\bZ} e^{n(\theta-p)} \|D_i(w(e^n\cdot)\zeta)\|_{H_p^{\gamma}}^p 
        \\
        &\leq N \sum_{n\in\bZ} e^{n(\theta-p)} \left(\|w(e^n\cdot)\zeta\|_{H_p^{\gamma}}^{p/2} \times \|D_i^2(w(e^n\cdot)\zeta)\|_{H_p^{\gamma}}^{p/2} \right)
        \\
        &\leq N \left(\sum_{n\in\bZ} e^{n\theta} \|w(e^n\cdot)\zeta\|_{H_p^{\gamma}}^{p}\right)^{1/2} \left(\sum_{n\in\bZ} e^{n(\theta-2p)} \|D_i^2(w(e^n\cdot)\zeta)\|_{H_p^{\gamma}}^{p}\right)^{1/2}
        \\
        &= N \|D_i^2w\|^{p/2}_{H_{p,\theta}^{\gamma}}\|w\|_{H_{p,\theta}^{\gamma}}^{p/2}.
    \end{align*}
    This leads to \eqref{eq5131329} when $D_x=D_i$.

    $(ii)$
   We also give a nonlinear example. Suppose that $\gamma>d/p$, and take a fixed nonrandom function $f_0=f_0(x) \in H_{p,\theta}^{\gamma}$. Let
\begin{equation*}
f(u)(x) := f_0(x) \sup_{y\in\bR_+^d} |y_d^{\theta/p}u(y)|.
\end{equation*}
By the Sobolev embedding in $H_{p,\theta}^{\gamma}$ (see \eqref{eq5250100}), for any $u,v\in\widetilde{H}_{p,\theta}^{\gamma+2,(2)}$,
\begin{align*}
\|f(u)-f(v)\|_{H_{p,\theta}^{\gamma}} \leq \|f_0\|_{H_{p,\theta}^{\gamma}} \sup_{y\in\bR_+^d}|y_d^{\theta/p}(u(y)-v(y))| \leq N\|f_0\|_{H_{p,\theta}^{\gamma}} \|u-v\|_{H_{p,\theta}^{\gamma}}.
\end{align*}
Thus, \eqref{eq4221624} holds with $N_\varepsilon$ independent of $\varepsilon$. Moreover, $f(0)=0$, and $f$ is a continuous map from $H_{p,\theta}^{\gamma}$ to itself. Hence, Assumption \ref{as_semi} is satisfied.
\end{example}

We state the first main result about the solvability of equation \eqref{maineq}.

\begin{theorem} \label{thm_main}
    Let $\gamma\in\bR$, $p\in[2,\infty)$, $T\in(0,\infty)$, and $\tau\leq T$ be a bounded stopping time.
    Suppose that Assumptions \ref{as_coeff} and \ref{as_semi} hold, and we assume that
    \begin{equation} \label{theta}
        d-1+p\left( 1- \frac{1}{p(1-\delta_1)+\delta_1} \right)<\theta < d-1+p.
    \end{equation}
    Assume that $u_0\in U_{p,\theta}^{\gamma+2}$. Then there is a unique solution $u\in  \frH_{neu}^{\gamma+2}(p,\theta,\tau)$ to \eqref{maineq}. Moreover, we have
    \begin{align} \label{main_est}
    \|u\|_{\frH_{neu}^{\gamma+2}(p,\theta,\tau)} &\leq N\left( \|u_0\|_{U_{p,\theta}^{\gamma+2}} + \|f(0)\|_{\bH_{p,\theta}^{\gamma}(\tau)} + \|g(0)\|_{{\bH}_{p,\theta}^{\gamma+1,(1)}(\tau,l_2)} \right),
    \end{align}
where $N$ depends only on $d,\gamma,p,\theta,\delta_0,\delta_1,T$, and the function $N_\varepsilon$.
\end{theorem}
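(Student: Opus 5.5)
The proof follows the standard Krylov-type scheme: a model equation, a priori estimates for general coefficients, the method of continuity, and an iteration for the semi-linear terms. The intended order is as follows.

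\emph{Step 1 (model equation).} First treat $du=(\Delta u+f)\,dt+g^k\,dw^k_t$ with the Neumann condition. An explicit solution comes from the Neumann heat kernel on $\bR^d_+$, i.e.\ the even reflection of the Gaussian kernel. This amounts to solving the whole-space problem for the even extensions of $u_0,f,g$ and restricting to $\bR^d_+$. Estimates in $H^{\gamma}_{p,\theta}$ are proved first for integer $\gamma$ and then extended to all $\gamma\in\bR$ by lifting and interpolation. The lifting uses that $D_x$ and multiplication by $x_d$ shift the indices $(\gamma,\theta)$ in a controlled way.

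For the Neumann part $x_d^{-1}D_du$, note that $v:=D_du$ solves the Dirichlet problem with data $D_df$, $D_dg$ (odd extensions). Krylov's weighted Dirichlet theory \cite{Kry_halfspace} in the range $d-1<\theta<d-1+p$ then gives $x_d^{-1}v\in\bH^{\gamma+1}_{p,\theta}(\tau)$. The tangential derivatives $D_{x'}u$ are controlled by whole-space estimates in $x'$, and $D_{dd}u$ by the equation together with the bound on $D_du$.

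\emph{Step 2 (a priori estimate for general coefficients).} Take $u\in\frH^{\gamma+2}_{neu}$ solving the linear equation $du=(a^{ij}D_{ij}u+f)\,dt+(\sigma^{ik}D_iu+g^k)\,dw^k_t$. The coefficients depend only on $(\omega,t)$, so no freezing in $x$ is needed. Apply the Dirichlet estimate to $D_du$, which solves
\[
d(D_du)=(a^{ij}D_{ij}D_du+D_df)\,dt+(\sigma^{ik}D_iD_du+D_dg^k)\,dw^k_t,
\]
with zero boundary value. This is where \eqref{theta} enters: the lower bound on $\theta$ depending on $\delta_1$ is exactly the range in which the weighted Dirichlet theory with stochastic gradient term holds (the Krylov--Lototsky range). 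The tangential derivatives $D_{x'}u$ also solve SPDEs of the same form, but without any boundary condition. They should be handled by a tangential Fourier/whole-space argument in $x'$, using that the operator $\sigma^{ik}D_i$ contains $\sigma^{dk}D_d$. That normal component is absorbed through the already estimated $D_du$, treated as a source term of the tangential equation.

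Combining these gives a bound of the form
\[
\|u\|_{\frH}\le N\bigl(\|u_0\|+\|f\|+\|g\|+\|u\|_{\bH^{\gamma}_{p,\theta}(\tau)}\bigr).
\]
The lower-order term $\|u\|_{\bH^{\gamma}_{p,\theta}(\tau)}$ is removed with the inequality $\|u\|_{\bH^{\gamma}(t)}^p\le N\int_0^t\|u\|_{\frH(s)}^p\,ds$, which follows from the equation, and Gronwall's lemma. This step is the main obstacle. The difficulty is the interaction between the normal component of the gradient noise and the boundary, and it must be shown that the estimate for $D_du$ closes under exactly the hypothesis \eqref{theta}. If the direct route fails, I would try estimating $D_du$ first by the Dirichlet theory with $D_{x'}$ terms treated as known, and then bootstrapping.

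\emph{Step 3 (solvability and the semi-linear case).} For the linear equation, use the method of continuity along $a^{ij}_\lambda=\lambda a^{ij}+(1-\lambda)\delta^{ij}$, $\sigma_\lambda=\lambda\sigma$. Assumption \ref{as_coeff} is preserved uniformly in $\lambda$, so the a priori estimate holds with a constant independent of $\lambda$. Together with Step 1 at $\lambda=0$, this gives existence and uniqueness. The space $\frH^{\gamma+2}_{neu}$ is complete, which is needed for the method of continuity.

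For the semi-linear terms $f(u)$, $g(u)$, define the map $v\mapsto u$ solving the linear problem with data $f(v)$, $g(v)$. Assumption \ref{as_semi}(ii) with small $\varepsilon$ makes the leading-order part contractive. The $N_\varepsilon\|u-v\|_{\bH^{\gamma}}$ part is handled by the Gronwall-type inequality of Step 2, either on short time intervals or with an equivalent exponentially weighted-in-time norm. This yields a fixed point and the estimate \eqref{main_est}.
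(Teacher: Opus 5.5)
For $\gamma\in\bN_0$ your scheme is essentially the paper's. It uses the heat-kernel model problem and applies Krylov's weighted Dirichlet theory to $D_du$; that step closes directly, since $u\in\frH^{\gamma+2}_{neu}$ already puts $D_du$ in Krylov's solution class, so there is no need to hedge. It then uses tangential whole-space estimates with the $D_d$-terms as sources, the method of continuity, and an iteration with Gronwall.

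The gap is the passage to arbitrary $\gamma\in\bR$ for the general-coefficient equation. Your tangential step only gives $H^{\gamma}_{p,\theta}$ bounds when the norm can be computed slice by slice in $x_d$. That is the case $\gamma\in\bN_0$, where the norm is equivalent to weighted $L_p$ norms of $x_d^mD_x^m u$; even there the paper needs a one-sided mollification in $x_d$ to make the slicing legitimate. For fractional and especially negative $\gamma$ there is no such representation. You therefore have no a priori estimate at level $\gamma$, and the method of continuity cannot be run there. Lifting or interpolating only the model equation, as in your Step 1, does not help, because the continuity argument needs the estimate for every $\lambda$. Moreover, the lifting you propose, by $D_x$ and multiplication by $x_d$, does not preserve the Neumann class: $D_d$ turns Neumann solutions into Dirichlet ones, and $D_d(x_du)|_{x_d=0}=u|_{x_d=0}$.

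The paper supplies the missing ingredients as follows.

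\emph{Non-integer $\gamma\ge0$.} It interpolates the solution operator $(f,g)\mapsto(u,x_d^{-1}D_du)$ between integer levels. This requires $[H^{\gamma_1,(n)}_{p,\theta},H^{\gamma_2,(n)}_{p,\theta}]_\kappa=H^{\gamma,(n)}_{p,\theta}$, which is not automatic because interpolation does not commute with intersections; the paper proves it via traces and extensions. Uniqueness comes from the inclusion $\frH^{\gamma+2}_{neu}\subset\frH^{2}_{neu}$.

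\emph{Existence for $\gamma<0$.} It inducts using $\cL=x_d^2\Delta-c$. This operator is an isomorphism $H^{\gamma+2,(n)}_{p,\theta}\to H^{\gamma,(n)}_{p,\theta}$, respects the Neumann structure ($\|v\|_{\frH^{\gamma+4}_{neu}}\approx\|\cL v\|_{\frH^{\gamma+2}_{neu}}$), and commutes with $D_{x'}$. One solves at level $\gamma+2$ with data $\cL^{-1}f,\cL^{-1}g$ and applies $\cL$. The commutators $-2x_d(a^{id}+a^{di})D_i\Delta v-2a^{dd}\Delta v$ and $-2x_d\sigma^{dk}\Delta v$ are one order smoother, so they are removed with the level-$(\gamma+1)$ result.

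\emph{Uniqueness for $\gamma<0$.} This needs a separate argument, since no a priori estimate is available. The Dirichlet theory forces $D_du=0$, so $\cL^{-m}u$ solves a purely tangential equation, and whole-space uniqueness in $x'$ applies.

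\emph{Initial data.} Putting $u_0$ into your a priori estimate requires $D_du_0$ to lie in Krylov's initial-data class, which you do not check. The paper avoids this by reducing to $u_0=0$ with an extension operator for the trace space $U^{\gamma+2}_{p,\theta}$.
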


\begin{remark} \label{rem5250111}
$(i)$ The range of $\theta$ is the same as that for the Dirichlet problem considered in \cite{Kry_halfspace}. The condition \eqref{theta} is satisfied whenever $d-2+p\leq\theta<d-1+p$. In the special case where $\sigma^{ik}=0$ for all $i$ and $k$, one can take $\delta_1=1$ in \eqref{theta}, which yields the larger range $d-1<\theta<d-1+p$. This coincides with the admissible range for the deterministic Dirichlet problem. On the other hand, for the stochastic Dirichlet problem with general $\sigma^{ik}$, it is known that the full range $d-1<\theta<d-1+p$ cannot be covered, although the sharp admissible range remains an open problem (see \cite{K03,Kry_halfspace}).

$(ii)$
Let $u$ be a solution taken from Theorem \ref{thm_main}.
Suppose further that $\gamma>d/p-1$ and $\theta<p$. Since
$x_d^{-1}D_du(t,\cdot)\in H_{p,\theta}^{\gamma+1}$, \eqref{eq5250100} implies that
\begin{equation*}
x_d^{\theta/p-1}D_du\in C(\mathbb R_+^d).
\end{equation*}
Thus, due to $\theta/p-1<0$, $D_du$ admits a continuous extension to
$\overline{\mathbb R_+^d}$ satisfying
\begin{equation*}
D_du(t,x',0)=0.
\end{equation*}
Consequently, if $\gamma>d/p-1$ and $\theta<p$, then, almost surely, the solution is sufficiently regular and the Neumann boundary condition is satisfied in the classical sense.
\end{remark}

In the remainder of this section, we present space-time Sobolev H\"older embedding results. The proofs are provided in Section \ref{sec_embed}.

We begin with the results near the boundary $x_d=0$. We first present a regularity result for $I_Ru := u\eta_R$, where $\eta_R=\eta_R(x_d)\in C^\infty(\bR^d_+)$ is a one-dimensional cutoff function such that $\eta_R=1$ on $\{0<x_d<R\}$ and $\eta_R=0$ on $\{x_d>2R\}$.

\begin{theorem} \label{thm_bdry}
    Let $u\in \frH_{neu}^{\gamma+2}(p,\theta,\tau)$ be the solution taken from Theorem \ref{thm_main}. Let $1/2>\beta>\alpha>1/p$, and assume that $\gamma_0+2-\frac{\theta_0}{p}-2\beta>0$ is not an integer, where $\theta_0 := \max\{\theta,d\}$ and $\gamma_0 := \min\{\gamma,0\}$. Then for any $R>0$,
\begin{align} \label{eq5251945}
    \bE \|I_Ru\|_{C^{\alpha-1/p}\left([0,\tau];C^{\gamma_0+2-\frac{\theta_0}{p}-2\beta}(\overline{\bR^d_+})\right)}^p <\infty.
\end{align}
In the special case when $\gamma=0$ and $\theta\geq d$, we have the following global regularity result
\begin{align} \label{eq5271711}
    \bE \|u\|_{C^{\alpha-1/p}\left([0,\tau];C^{2-\frac{\theta}{p}-2\beta}(\overline{\bR^d_+})\right)}^p <\infty.
\end{align}
\end{theorem}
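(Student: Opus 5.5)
The plan is to reduce \eqref{eq5251945} to a Hölder embedding for stochastic processes with values in unweighted spaces on the half space, and then use Sobolev–Hölder embedding in space. Write $v:=I_Ru=u\eta_R$. First I would show that $v$ lies in an unweighted solution class of the form
\begin{equation*}
v\in L_p\bigl((0,\tau];H_p^{\gamma_0+2-\frac{\theta_0-d}{p}}(\bR^d_+)\bigr),\qquad dv=\tilde f\,dt+\tilde g^k\,dw^k_t,
\end{equation*}
with $\tilde f\in L_p((0,\tau];H_p^{\gamma_0-\frac{\theta_0-d}{p}})$ and $\tilde g\in L_p((0,\tau];H_p^{\gamma_0+1-\frac{\theta_0-d}{p}}(l_2))$. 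The same holds for an initial datum in the corresponding trace space. Here $\tilde f=\eta_R\bD u$ and $\tilde g=\eta_R\bS u$. No commutator appears, because $\eta_R$ depends only on $x_d$ and multiplies, while the equation contains no derivatives in the $dt$ term of $v$ other than $\bD u$ itself.

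The spatial part of this step is the comparison between $H^{\gamma,(n)}_{p,\theta}$ and weighted Triebel–Lizorkin spaces in Section \ref{sec_funct}. Near the boundary, $\eta_R$ localizes to $x_d<2R$. If $\theta\ge d$, the weight $x_d^{\theta-d}$ is bounded there, and it can be traded for regularity via the Sobolev-type embedding $F^{s}_{p,2}(\bR^d_+,x_d^{\theta-d})\hookrightarrow F^{s-(\theta-d)/p}_{p,q}(\bR^d_+)$ on bounded strips. If $\theta<d$, then $L_{p,\theta}$ already embeds locally into $L_p$, which is why $\theta_0=\max\{\theta,d\}$ appears. Replacing $\gamma$ by $\gamma_0=\min\{\gamma,0\}$ is needed because $u\in H^{\gamma+2,(2)}_{p,\theta}$ places $u,D_xu,D^2_xu$ in $H^{\gamma}_{p,\theta}$. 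For $\gamma\le0$ this identifies with a full $\gamma+2$ order weighted Triebel–Lizorkin regularity. For $\gamma>0$, I would use only the $\gamma=0$ information, via $H^\gamma_{p,\theta}\subset L_{p,\theta}$.

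Second, I would apply the Krylov-type parabolic embedding for $L_p$-stochastic processes: if $v\in\cH^{s}_p(\tau)$ on $\bR^d$ with $p\ge2$, then for $1/2>\beta>\alpha>1/p$,
\begin{equation*}
\bE\|v\|^p_{C^{\alpha-1/p}([0,\tau];H_p^{s-2\beta})}\le N\|v\|^p_{\cH^s_p(\tau)}.
\end{equation*}
To use this on the half space I would extend $v$ evenly in $x_d$. This is natural for the Neumann problem, although only a bounded extension operator on $H^s_p(\bR^d_+)$ in the relevant range is actually needed. The operator acts linearly on $v$, $\tilde f$ and $\tilde g$, so the extended function satisfies the extended equation in distributions on $\bR^d$. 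The spatial Sobolev embedding $H_p^{s-2\beta}\hookrightarrow C^{s-2\beta-d/p}$ then gives the exponent $\gamma_0+2-\theta_0/p-2\beta$. The non-integer assumption lets the Hölder–Zygmund space be identified with the classical Hölder space. Finiteness follows from \eqref{main_est}.

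For \eqref{eq5271711}, with $\gamma=0$ and $\theta\ge d$, I would skip the cutoff. Away from the boundary, $x_d$ is bounded below, so $x_d^{\theta-d}\gtrsim 1$ fails to help only near the boundary, which is already handled. Alternatively, I would show directly that the global weighted Sobolev space $W^2_p(\bR^d_+,x_d^{\theta-d})$ embeds into $F^{2-(\theta-d)/p}_{p,q}(\bR^d_+)$; the weight grows at infinity and only helps there. Combining this with the interior estimate gives the global bound.

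The main obstacle is the first step: the precise embedding from $H^{\gamma+2,(2)}_{p,\theta}$, truncated near the boundary, into unweighted $H^{s}_p(\bR^d_+)$ with loss exactly $(\theta_0-d)/p$. It requires the Triebel–Lizorkin comparison with correct endpoints, and perhaps a slight $\varepsilon$-loss that the strict inequality $\beta>\alpha$ can absorb. Once this is established, the remaining steps are standard.
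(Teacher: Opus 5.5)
Your outline is the paper's proof. You localize by $I_R$, reduce to $\gamma\le 0$, move the weight to $\theta_0$ and use \eqref{eq5121127} to land in weighted Triebel--Lizorkin spaces. You then trade the weight for $(\theta_0-d)/p$ derivatives, extend to $\bR^d$, apply Krylov's time-H\"older embedding with the Besov trace space for the initial datum, and finish with the spatial Sobolev embedding. The paper extends first, in the weighted scale, and embeds afterwards; this order is immaterial. Your device of absorbing an $\varepsilon$-loss by replacing $\beta$ with $\beta-\varepsilon/2>\alpha$ is valid but unnecessary, since the weighted-to-unweighted embedding the paper cites is sharp. For \eqref{eq5271711}, your second alternative is the paper's argument via Lemma \ref{lem5141224}, and it needs no interior estimate. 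The compactly supported cutoffs of Theorem \ref{thm_int} could not cover the unbounded region away from the boundary anyway.

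The step that fails as written is the even reflection. It is bounded from $H^s_p(\bR^d_+)$ to $H^s_p(\bR^d)$ only for $-1+1/p<s<1+1/p$. Going above $1+1/p$ is harmless for $v$ thanks to the Neumann condition; the trouble is at the bottom of the scale. The drift $\tilde f=\eta_R\bD u$ has regularity $s_1:=\gamma_0-(\theta_0-d)/p$. The hypothesis $\gamma_0+2-\theta_0/p-2\beta>0$ only forces $s_1>-2+d/p+2\beta$. So once $p>d+1$ there are admissible cases with $s_1\le -1+1/p$, where reflection is not a bounded operator on $H^{s_1}_p(\bR^d_+)$.

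What you need, and what the paper uses, is a single universal extension operator $\cE$ bounded simultaneously on the whole scale $s\in[s_1,s_1+2]$. This is the Rychkov-type operator from \cite[Proposition 5.6]{LMV18}. Boundedness on the lowest space lets $\cE$ pass through the Bochner and stochastic integrals, so the extended equation holds on $\bR^d$. Boundedness on the higher spaces gives $\cE v$ and $\cE\tilde g$ the regularity Krylov's theorem requires. It is this universality, not mere linearity, that makes your sentence about the extended equation true.
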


Next, we present the embedding result away from the boundary.

\begin{theorem} \label{thm_int}
Let $u\in \frH_{neu}^{\gamma+2}(p,\theta,\tau)$ be the solution taken from Theorem \ref{thm_main}. Let $1/2>\beta>\alpha>1/p$, and assume that $\gamma+2-\frac{d}{p}-2\beta>0$ is not an integer. Then for any $\chi \in C_c^\infty(\bR^d_+)$,
\begin{align} \label{eq5251946}
    \bE \|\chi u\|_{C^{\alpha-1/p}\left([0,\tau];C^{\gamma+2-\frac{d}{p}-2\beta}(\overline{\bR^d_+})\right)}^p <\infty.
\end{align}
\end{theorem}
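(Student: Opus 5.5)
The strategy is to localize $u$ away from the boundary, where the weights are harmless, and then apply the classical whole-space stochastic Sobolev--H\"older embedding of Krylov. Fix $\chi\in C_c^\infty(\bR^d_+)$ with support in $\{c_1<x_d<c_2\}$ for some $0<c_1<c_2<\infty$, and set $v:=\chi u$, extended by zero to $\bR^d$. The goal is to show that
\begin{equation*}
v\in L_p((0,\tau];H_p^{\gamma+2}(\bR^d)), \qquad dv=\chi\,\bD u\,dt+\chi\,\bS u^k\,dw_t^k,
\end{equation*}
with $\chi\bD u\in L_p((0,\tau];H_p^{\gamma}(\bR^d))$, $\chi\bS u\in L_p((0,\tau];H_p^{\gamma+1}(\bR^d;l_2))$ and $v(0)=\chi u_0\in L_p(\Omega;B^{\gamma+2-2/p}_{p,p}(\bR^d))$. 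In Krylov's terminology this means $v\in\mathcal H^{\gamma+2}_p(\tau)$. Once this is in place, the embedding theorem for $\mathcal H^{\gamma+2}_p(\tau)$ gives, for $1/2>\beta>\alpha>1/p$,
\begin{equation*}
\bE\|v\|^p_{C^{\alpha-1/p}([0,\tau];H_p^{\gamma+2-2\beta})}\le N\|v\|^p_{\mathcal H_p^{\gamma+2}(\tau)}.
\end{equation*}
Since $\gamma+2-2\beta-d/p>0$ is not an integer, the Sobolev embedding $H_p^{\gamma+2-2\beta}\subset C^{\gamma+2-2\beta-d/p}$ then yields \eqref{eq5251946}. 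Finiteness follows from the estimate \eqref{main_est} in Theorem \ref{thm_main}.

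\textbf{Step 1: localization.} On $\operatorname{supp}\chi$ we have $x_d\sim1$, so the dyadic definition \eqref{defH} shows that only finitely many $m$ (those with $e^m\sim 1$) see $\chi$. The standard localization property of $H^\gamma_{p,\theta}$ (multiplication by $\chi$ and comparison with unweighted norms on compact sets, proven in \cite{Khalf} and recalled in Section \ref{sec_funct}) gives
\begin{equation*}
\|\chi h\|_{H_p^{s}(\bR^d)}\le N\|h\|_{H^{s}_{p,\theta}}
\end{equation*}
for any $s$ and $\theta$, with $N$ depending on $\chi$. Applying this with $s=\gamma$ to $u,D_xu,D_x^2u\in H^\gamma_{p,\theta}$ gives control of $D^2_x(\chi u)$ and of the lower-order terms (note $D_x\chi$ is also compactly supported). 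Hence $\chi u\in H_p^{\gamma+2}$, since $\|w\|_{H^{\gamma+2}_p}\le N(\|w\|_{H^\gamma_p}+\|D_x^2w\|_{H^\gamma_p})$. Similarly, $g,D_xg\in H^{\gamma}_{p,\theta}(l_2)$ gives $\chi g\in H^{\gamma+1}_p(l_2)$. The SPDE for $v$ follows by testing the distributional equation against $\chi\phi$ for $\phi\in C_c^\infty(\bR^d)$.

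\textbf{Step 2: initial data (main obstacle).} We must show that $u_0\in U^{\gamma+2}_{p,\theta}$ implies $\chi u_0\in B^{\gamma+2-2/p}_{p,p}(\bR^d)$. The plan is to use the fact that the map $h\mapsto\chi h$ is bounded from $\widetilde H^{\gamma+2,(2)}_{p,\theta}$ to $H^{\gamma+2}_p$ (Step 1) and from $H^\gamma_{p,\theta}$ to $H^\gamma_p$. Real interpolation then maps $U^{\gamma+2}_{p,\theta}$ into
\begin{equation*}
(H^{\gamma+2}_p,H^\gamma_p)_{1/p,p}=B^{\gamma+2-2/p}_{p,p}.
\end{equation*}
This avoids any explicit description of $U^{\gamma+2}_{p,\theta}$; one only needs interpolation functoriality and the $L_p(\Omega)$ identification already stated. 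Alternatively, one can avoid the initial data entirely by using the version of Krylov's embedding that only requires $v\in\mathcal H^{\gamma+2}_p(\tau)$, whose norm already incorporates $v(0)$ through the Besov norm. Either way, after these steps the result is a direct application of the whole-space theory.
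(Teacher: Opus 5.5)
Your proposal is correct and follows the paper's proof of this theorem. The paper likewise localizes by $\chi$ using the comparison of $H^{s}_{p,\theta}$ with $H^{s}_p$ on compact subsets of $\bR^d_+$, maps $u_0$ into $B^{\gamma+2-2/p}_{p,p}$ by real interpolation of $h\mapsto \chi h$, applies Krylov's whole-space embedding into $C^{\alpha-1/p}([0,\tau];H_p^{\gamma+2-2\beta})$, and finishes with the spatial Sobolev embedding.

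Two small caveats, neither of which is a gap. The ``alternative'' at the end of Step 2 is not really one: membership of $\chi u$ in Krylov's class already requires $\chi u_0$ to lie in the trace space. Also, Krylov's theorem is stated with initial data in $H^{\gamma+2-2/p}_p\subsetneq B^{\gamma+2-2/p}_{p,p}$ (for $p>2$). So you should either invoke the Besov-trace version, as the paper does, or use $B^{s}_{p,p}\subset H^{s-\varepsilon}_p$ and absorb the loss by replacing $\beta$ with $\beta-\varepsilon/2>\alpha$.
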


\begin{remark} \label{rem1013}
We elaborate on Theorems \ref{thm_bdry} and \ref{thm_int}.

$(i)$ Assume that $\sigma^{ik}\equiv0$ so that we can take $\delta_1=1$ and $\theta=d$ in \eqref{theta}. Let us fix $\varepsilon>0$ and $\gamma+2\in (0,2]$. Then by taking both $\alpha$ and $\beta$ sufficiently close to $1/p$, and $p$ sufficiently large, \eqref{eq5251945} and \eqref{eq5251946} yield that for any $x_0\in \bR^d_+$,
\begin{equation*}
\sup_{t\in [0,\tau]} \|u(t,\cdot)\|_{C^{\gamma+2-\varepsilon}(B_1(x_0)\cap \overline{\bR^d_+})} < \infty \, (a.s.).
\end{equation*}
This is notably different from the Dirichlet case. Indeed, even when $g\in C_c^\infty([0,\tau)\times \overline{\bR^d_+})$ is a nonrandom function independent of $u$, \cite[Theorem 3.3]{Kry_halfspace} and \eqref{eq5250100} below yield, at best, the spatial regularity
\begin{equation*}
    u(t,\cdot) \in C^{1-\varepsilon}(B_1(x_0)\cap \overline{\bR^d_+}).
\end{equation*}
 This phenomenon is related to the possible failure of $D_x^2u\in \bL_{p,d}$ (see \cite[Section 1]{K94}).

 $(ii)$ Next, we impose no restriction on $\delta_1$. We consider the particular choice $\theta=d-2+p$, which is allowed by Remark \ref{rem5250111}. Assume that $\gamma+2\in (1,2]$. As in $(i)$, by taking both $\alpha$ and $\beta$ sufficiently close to $1/p$, and $p$ sufficiently large, we get
 \begin{equation*}
\sup_{t\in [0,\tau]} \|u(t,\cdot)\|_{C^{\gamma+1-\varepsilon}(B_1(x_0)\cap \overline{\bR^d_+})} < \infty \, (a.s.).
\end{equation*}
We emphasize that this type of boundary regularity remains valid even in the presence of the stochastic term $\sigma^{ik}$. In particular, when $\gamma+2=2$, we deduce that $u(t,\cdot) \in C^{1-\varepsilon}$ near the boundary.

This should be compared with the Dirichlet case, where the stochastic coefficients $\sigma^{ik}$ cause a loss of boundary regularity. In particular, \cite{K03} shows that, in the one-dimensional case $d=1$, the optimal regularity near the boundary cannot reach $C^{1-\varepsilon}$.
More precisely, for any $\varepsilon>0$, one can construct an equation with nonzero $\sigma^{ik}=\sigma^{1k}$ whose solution has boundary H\"older regularity of order less than $\varepsilon$.
The same phenomenon is expected in general dimensions, and weaker regularity estimates are presented in \cite{M19}.
\end{remark}

\section{Some properties of function spaces} \label{sec_funct}

\subsection{Homogeneous weighted Sobolev spaces}

First, we state some basic properties of the homogeneous weighted Sobolev space $H_{p,\theta}^{\gamma}$.

\begin{lemma} \label{lem4161836}
Let $p\in(1,\infty)$, and $\gamma,\theta\in\bR$.

(i) The space $H_{p,\theta}^\gamma$ is a Banach space, and the set $C_c^\infty(\bR^d_+)$ is dense in this space.

(ii) 
The following norms are equivalent: for any $\delta\in\bR$,
\begin{equation} \label{eq5062159}
    \|u\|_{H_{p,\theta}^\gamma} \approx \|x_d^{-\delta} u\|_{H_{p,\theta+\delta p}^\gamma}
\end{equation}
and
\begin{equation} \label{equiv_der}
    \|u\|_{H_{p,\theta}^{\gamma}} \approx \left(\|u\|_{H_{p,\theta}^{\gamma-1}} + \|x_dD_xu\|_{H_{p,\theta}^{\gamma-1}}\right).
\end{equation}
In addition, for any $\mu\leq \gamma$,
\begin{equation} \label{eq5062157}
    \|u\|_{H_{p,\theta}^{\gamma}} \approx \left(\|u\|_{H_{p,\theta}^{\mu}} + \|x_dD_xu\|_{H_{p,\theta}^{\gamma-1}}\right).
\end{equation}

(iii) (Hardy's inequality)
If $\theta\neq d-1$ and $u\in H_{p,\theta}^{\gamma}$, then
\begin{equation*}
    \|u\|_{H_{p,\theta}^{\gamma}} \leq N(d,\gamma,p,\theta) \|x_dD_xu\|_{H_{p,\theta}^{\gamma-1}}.
\end{equation*}
Moreover, if $\theta\neq d-1+p$ and $u\in H_{p,\theta}^{\gamma}$, then
\begin{equation*}
    \|u\|_{H_{p,\theta}^{\gamma}} \leq N(d,\gamma,p,\theta) \|D_x(x_du)\|_{H_{p,\theta}^{\gamma-1}}.
\end{equation*}

(iv) Assume
$\gamma-\frac{d}{p} \geq n+\delta$ for  $n\in\bN_0$ and $\delta\in(0,1)$. Then for all $k\leq n$,
\begin{equation} \label{eq5250100}
    |x_d^{k+\frac{\theta}{p}} D_x^k u|_{C(\bR^d_+)}+[x_d^{n+\frac{\theta}{p}+\delta} D_x^{n}u]_{C^{\delta}(\bR^d_+)}\leq N(d,\gamma,p,\theta)\|u\|_{H_{p,\theta}^{\gamma}}.
\end{equation}

(v) (Interpolation)
Let $\kappa\in(0,1)$, $\gamma_i,\theta_i\in\bR$, $i=1,2$, and
\begin{equation*}
    \theta=\kappa\theta_1+(1-\kappa)\theta_2, \quad \gamma=\kappa\gamma_1 + (1-\kappa)\gamma_2.
\end{equation*}
Then we have
\begin{align} \label{eq5071804}
H_{p,\theta}^\gamma=[H_{p,\theta_1}^{\gamma_1},H_{p,\theta_2}^{\gamma_2}]_\kappa,
\end{align}
where $[A,B]_\kappa$ is the complex interpolation space of $A$ and $B$.

(vi) (Duality)
Let
\begin{equation*}
1/p+1/p'=1, \quad \theta/p+\theta'/p' = d.
\end{equation*}
Then the dual space of $H_{p,\theta}^\gamma$ is $H_{p',\theta'}^{-\gamma}$. Moreover, for $u\in H_{p,\theta}^\gamma$, the pairing $(u,\phi)$, initially defined on test functions $\phi\in C_c^\infty(\bR^d_+)$, can be extended continuously to all of  $H_{p',\theta'}^{-\gamma}$.

(vii)
The assertions in (i)-(v) remain valid when $H_{p,\theta}^\gamma$ is replaced by $H_{p,\theta}^\gamma(l_2)$. In particular, the set of all $l_2$-valued functions $u=(u^1,u^2,\cdots)$ such that $u^k=0$ for all sufficiently large $k$, and each component $u^k \in C_c^\infty(\bR^d_+)$, is dense in $H_{p,\theta}^\gamma(l_2)$.

\end{lemma}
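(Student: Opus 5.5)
Most assertions of the lemma are already in \cite{Khalf} and the later literature on $H^\gamma_{p,\theta}$ (Krylov, Lototsky). The plan is therefore to cite these results and verify only the less standard statements through the dyadic definition \eqref{defH}. The basic tool throughout is the localization $u\mapsto\{u(e^m\cdot)\zeta\}_{m\in\bZ}$. It maps $H^\gamma_{p,\theta}$ isometrically onto a closed subspace of the weighted sequence space $\ell_p(e^{m\theta};H^\gamma_p)$. The map in the other direction is the "gluing" operator $\{v_m\}\mapsto\sum_m \xi(e^{-m}x_d)v_m(e^{-m}x)$, where $\xi\zeta\equiv1$ is normalized using \eqref{eq5062057}. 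The gluing operator is bounded and gives a retraction.

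For (i), completeness follows because $H^\gamma_{p,\theta}$ is a retract of a Banach space. Density of $C_c^\infty(\bR^d_+)$ comes from truncating the sum in $m$ and mollifying each piece, where the mollification error is controlled in $H^\gamma_p$. For (ii), identity \eqref{eq5062159} follows from the fact that $x_d^{-\delta}$ restricted to $\mathrm{supp}\,\zeta(e^{-m}\cdot)$ equals $e^{-m\delta}$ times a smooth multiplier with uniformly bounded $C^k$ norms. Multiplication by such functions is bounded on $H^\gamma_p$, and the factor $e^{-m\delta p}$ is absorbed by the shift $\theta\mapsto\theta+\delta p$. For \eqref{equiv_der}, I would use $\|v\|_{H^\gamma_p}\approx\|v\|_{H^{\gamma-1}_p}+\|D_xv\|_{H^{\gamma-1}_p}$ applied to $v=u(e^m\cdot)\zeta$. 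Writing $D_x(u(e^m\cdot)\zeta)=e^m(D_xu)(e^m\cdot)\zeta+u(e^m\cdot)D\zeta$, one replaces $e^m$ by $x_d$ at the cost of a smooth multiplier. The commutator term is handled by noting that the norm is independent of the choice of $\zeta$, since $D\zeta$ is itself an admissible cutoff. Iterating \eqref{equiv_der} and interpolating between $\mu$ and $\gamma$ gives \eqref{eq5062157}.

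For the Hardy inequality (iii), I would first prove the case $\gamma=0$ for $u\in C_c^\infty(\bR^d_+)$ by the one-dimensional Hardy inequality in $x_d$ with weight $x_d^{\theta-d}$. This inequality requires $\theta-d\neq-1$, i.e., $\theta\neq d-1$. The second inequality reduces to the first applied to $x_du$ together with \eqref{eq5062159} at $\delta=1$, which produces the exclusion $\theta\neq d-1+p$. To pass to general $\gamma$, I would use the lifting identity from \cite{Khalf}: the operators $x_dD_x$ commute with the dyadic structure up to lower-order terms, so the estimate transfers to every $\gamma$ by induction on integer $\gamma$, then by interpolation (v) and duality (vi) for the remaining values. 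The embedding (iv) is Sobolev's embedding $H^\gamma_p\subset C^{n+\delta}$ applied to each $u(e^m\cdot)\zeta$. Scaling back gives $e^{m\theta/p}\,e^{mk}|D^ku|\lesssim\|u(e^m\cdot)\zeta\|_{H^\gamma_p}$ on the dyadic strip, and each term in the sum is dominated by the full norm.

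For (v), the retraction argument combined with the complex interpolation of weighted $\ell_p(H^\gamma_p)$ (Triebel, Stein–Weiss) gives $[\ell_p^{\theta_1}(H^{\gamma_1}_p),\ell_p^{\theta_2}(H^{\gamma_2}_p)]_\kappa=\ell_p^{\theta}(H^\gamma_p)$. The point needing care is that the retraction is uniform in the parameters, which holds because the same $\zeta,\xi$ are used for all parameter values. Duality (vi) follows in the same manner from $(\ell_p^{\theta}(H^\gamma_p))^*=\ell_{p'}^{\theta'}(H^{-\gamma}_{p'})$, where the weight relation $\theta/p+\theta'/p'=d$ comes from the Jacobian $e^{md}$ in the rescaled pairing. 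Part (vii) repeats everything with $l_2$-valued $H^\gamma_p(l_2)$, whose multiplier and interpolation theory is identical. For the density claim one additionally truncates in $k$. I expect the main obstacle to be the commutator bookkeeping in \eqref{equiv_der} and the extension of Hardy's inequality to negative $\gamma$. In practice I would simply cite \cite[Lemma 1.4, Corollary 2.3]{Khalf} and Lototsky's work for these two steps.
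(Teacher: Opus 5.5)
Your approach agrees with the paper's, which proves this lemma purely by citation (Sections 1--3 of \cite{Kry_some}, noting that the same proofs work for $H^\gamma_{p,\theta}(l_2)$). Your localization/retraction sketches for (i), (ii) and (iv)--(vii) are broadly consistent with that literature and with the $S$/$R$ argument the paper later uses in Lemma~\ref{lem_inter}. One correction to your own sketch of (iii) shows that citing is genuinely necessary there, not just convenient. The one-dimensional Hardy inequality $\|u\|_{L_{p,\theta}}\le N\|x_dD_du\|_{L_{p,\theta}}$ is the lemma's case $\gamma=1$, not $\gamma=0$, because the right-hand side is measured in $H^{\gamma-1}_{p,\theta}$. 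Combined with \eqref{eq5062157} it yields all $\gamma\ge 1$. For $\gamma<1$, however, the lower bound $\|u\|_{H^\gamma_{p,\theta}}\le N\|x_dD_xu\|_{H^{\gamma-1}_{p,\theta}}$ cannot simply be interpolated or dualized. By the closed range theorem it is equivalent to solving $\phi=\sum_i D_i(x_d\psi^i)$ with $\sum_i\|\psi^i\|_{H^{1-\gamma}_{p',\theta'}}\le N\|\phi\|_{H^{-\gamma}_{p',\theta'}}$, which is a degenerate elliptic statement not contained in (v)--(vi).
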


\begin{proof}
See Sections 1-3 in \cite{Kry_some} for the results on $H_{p,\theta}^{\gamma}$. We remark that the same proofs go through for $H_{p,\theta}^\gamma(l_2)$.
\end{proof}

In general, interpolation does not commute with intersections of Banach spaces. However, for the spaces $H_{p,\theta}^\gamma$, such a commutation property does hold. We note that for two Banach spaces $X$ and $Y$ continuously embedded into a common Hausdorff topological vector space, $X \cap Y$ is equipped with the norm
\begin{equation*}
\|u\|_{X \cap Y}:=\|u\|_{X}+\|u\|_{Y}.
\end{equation*}

\begin{lemma} \label{lem_inter}
Let $n\in[0,\infty)$, $\nu\in\bR$, $\kappa\in(0,1)$, $p\in(1,\infty)$, and let $\gamma_i,\theta_i\in\bR$ for $i=1,2$. Define
\begin{equation*}
\theta := \kappa\theta_1 + (1-\kappa)\theta_2, \quad \gamma := \kappa \gamma_1 + (1-\kappa) \gamma_2.
\end{equation*}
Then we have
\begin{equation} \label{eq9031054}
\left[ H^{\gamma_1}_{p,\theta_1} \cap H^{\gamma_1-n}_{p,\theta_1+\nu p}, H^{\gamma_2}_{p,\theta_2} \cap H^{\gamma_2-n}_{p,\theta_2+\nu p} \right]_\kappa = H^{\gamma}_{p,\theta}\cap H^{\gamma-n}_{p,\theta+\nu p}.
\end{equation}
\end{lemma}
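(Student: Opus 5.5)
The plan is to reduce the intersection to a single weighted space through a localization that turns each $H_{p,\theta}^\gamma$ into a weighted $\ell_p$-sum of copies of $H_p^\gamma(\bR^d)$, and then to use a retraction–coretraction argument. Fix $\zeta$ as in \eqref{eq5062057}, and choose $\tilde\zeta\in C_c^\infty(\bR_+)$ with $\tilde\zeta=1$ on the support of $\zeta$. We may also choose $\zeta$ so that $\sum_m\zeta(e^{-m}x_d)=1$; this is allowed because the spaces do not depend on $\zeta$. Define the coretraction
\begin{equation*}
S u := \big(u(e^m\cdot)\zeta\big)_{m\in\bZ},
\end{equation*}
and the retraction
\begin{equation*}
R(v_m)_m := \sum_m v_m(e^{-m}\cdot)\tilde\zeta(e^{-m}x_d).
\end{equation*}
Then $RS=\mathrm{id}$. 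By \eqref{defH}, $S$ maps $H^\gamma_{p,\theta}$ isometrically into $\ell_p^{\theta}(H_p^\gamma):=\{(v_m):\sum e^{m\theta}\|v_m\|_{H^\gamma_p}^p<\infty\}$. The standard argument of \cite{Khalf} shows that $R$ is bounded back, since multiplication by $\tilde\zeta$ and the finite overlap of dilates are bounded on $H^\gamma_p$.

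The key point is that $S$ and $R$ are the \emph{same} maps for both spaces in each intersection. So $S$ maps $X_i:=H^{\gamma_i}_{p,\theta_i}\cap H^{\gamma_i-n}_{p,\theta_i+\nu p}$ into
\begin{equation*}
Y_i:=\ell_p^{\theta_i}(H_p^{\gamma_i})\cap \ell_p^{\theta_i+\nu p}(H_p^{\gamma_i-n}),
\end{equation*}
and $R$ maps $Y_i$ back into $X_i$. By the retraction theorem (Triebel 1.2.4), $[X_1,X_2]_\kappa=R[Y_1,Y_2]_\kappa$. It therefore suffices to compute $[Y_1,Y_2]_\kappa$.

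To do this, absorb the weights. The map $(v_m)\mapsto(e^{m\theta_i/p}v_m)$ is not the same for both components, so instead write the intersection pointwise in $m$. For each $m$, the space $H_p^{\gamma_i}\cap H_p^{\gamma_i-n}$ with the weighted norm $e^{m\theta_i/p}\|v\|_{H_p^{\gamma_i}}+e^{m(\theta_i+\nu p)/p}\|v\|_{H_p^{\gamma_i-n}}$ is just $H_p^{\gamma_i}$ with the equivalent norm $e^{m\theta_i/p}\|(\lambda_m^{n}+(1-\Delta)^{n/2})(1-\Delta)^{(\gamma_i-n)/2}v\|_{L_p}$, where $\lambda_m:=e^{m\nu/n}$ (when $n>0$; the case $n=0$ is trivial). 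The equivalence holds uniformly in $m$ by the Mikhlin multiplier theorem, since $(\lambda^n+\langle\xi\rangle^n)/(\lambda^n+\langle\xi\rangle^n)$-type quotients such as $\langle\xi\rangle^n(\lambda^n+\langle\xi\rangle^n)^{-1}$ and $\lambda^n(\lambda^n+\langle\xi\rangle^n)^{-1}$ have Mikhlin constants independent of $\lambda>0$.

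Set $T_m:=(1-\Delta)^{-\gamma_2/2}(\lambda_m^n+(1-\Delta)^{n/2})(1-\Delta)^{-n/2}$-type operators. Concretely, the family $A_{m}^{z}:=e^{m\theta(z)/p}(\lambda_m^n+\langle D\rangle^n)\langle D\rangle^{\gamma(z)-n}$, with $\theta(z),\gamma(z)$ affine in $z$, is an analytic family of isomorphisms onto $L_p$. Its imaginary powers $\langle D\rangle^{is}$ and $e^{ism}$ are bounded with at most polynomial growth in $s$, uniformly in $m$ (again by Mikhlin). Stein's interpolation, or equivalently the standard identification of weighted $\ell_p(L_p)$ spaces via the isomorphism $A^z$, then gives
\begin{equation*}
[Y_1,Y_2]_\kappa=\ell_p^{\theta}(H_p^{\gamma})\cap\ell_p^{\theta+\nu p}(H_p^{\gamma-n})
\end{equation*}
with equivalent norms. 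Applying $R$ and $S$ yields \eqref{eq9031054}.

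The main obstacle is exactly this uniform-in-$m$ step: we need a single analytic family whose imaginary-power growth is controlled independently of the dilation parameter $\lambda_m$. My first attempt would be to write $[Y_1,Y_2]_\kappa$ as $\ell_p$ of the pointwise interpolation spaces, which is valid for $\ell_p$-sums with uniform constants. I would then verify the uniform Mikhlin bounds for $(\lambda^n+\langle\xi\rangle^n)^{is}$ type symbols, which follow because $\xi\mapsto\lambda^n+\langle\xi\rangle^n$ has derivatives bounded by $|\xi|^{-|\alpha|}$ times itself, uniformly in $\lambda$.
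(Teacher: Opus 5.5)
Your proposal is correct and is essentially the paper's proof. It uses the same localization $S,R$ with $RS=\mathrm{id}$ and the retraction theorem, and reduces to weighted $l_p(H_p)$ interpolation through the uniform-in-$m$ Mikhlin equivalence $\|v\|_{H^{\gamma}_p}+e^{m\nu}\|v\|_{H^{\gamma-n}_p}\approx\|\Lambda_m v\|_{H^{\gamma}_p}$ with $\Lambda_m=1+e^{m\nu}(1-\Delta)^{-n/2}$, which is your $(\lambda_m^n+\langle D\rangle^n)\langle D\rangle^{-n}$.

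The only difference is packaging: the paper folds $\Lambda_m$ into $S$ and $\Lambda_m^{-1}$ into $R$, so the target is directly $l_p^{\theta_i}(H^{\gamma_i}_p)$ and Triebel's Theorem 1.18.1 applies without any Stein-type argument. This also shows that the obstacle you anticipated, imaginary powers of $\lambda_m^n+\langle D\rangle^n$, does not arise, because that factor does not depend on the interpolation parameter.
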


\begin{proof} 
\textit{Step 1.} We claim that for $\gamma_0\in\bR$, $m\in\bZ$, and $v\in H_p^{\gamma_0}$, there exists $N=N(d,p,n)$ such that
\begin{equation} \label{eq9031211}
\|\Lambda_m v\|_{H^{\gamma_0}_{p}} \leq \|v\|_{H^{\gamma_0}_{p}} + e^{m\nu}\|v\|_{H^{\gamma_0-n}_{p}} \leq N \|\Lambda_m v\|_{H^{\gamma_0}_{p}},
\end{equation}
where $\Lambda_m:=1+e^{m\nu}(1-\Delta)^{-n/2}$. Here,
\begin{equation*}
\Lambda_m v = \cF^{-1} \left\{\left(1 + e^{m\nu}(1+|\xi|^2)^{-n/2}\right)\cF v\right\}.
\end{equation*}
 The first inequality follows from $\|(1-\Delta)^{-n/2}v\|_{H^{\gamma_0}_{p}} = \|v\|_{H^{\gamma_0-n}_{p}}$ and the triangle inequality. 
 Thus, it remains to prove the second inequality. Since the operators $(1-\Delta)^{\gamma_0/2}$ and $\Lambda_m$ commute, it suffices to consider the case when $\gamma_0=0$. If $n=0$, then \eqref{eq9031211} is obvious. Thus, we assume that $n>0$. Note that
 \begin{equation} \label{eq9032336}
     v = \frac{1}{1+e^{m\nu}(1-\Delta)^{-n/2}} \Lambda_m v
 \end{equation}
 and
  \begin{equation} \label{eq9032337}
     e^{m\nu}(1-\Delta)^{-n/2} v = \frac{e^{m\nu}(1-\Delta)^{-n/2}}{1+e^{m\nu}(1-\Delta)^{-n/2}} \Lambda_m v.
 \end{equation}
 Let
\begin{equation*}
    h_1(t):=\frac{1}{1+t^{-n}}, \quad h_2(t):=\frac{t^{-n}}{1+t^{-n}}.
\end{equation*}
 Then one can check that $\sup_{t>0}|t^kh_i^{(k)}(t)|<\infty$ for every $k\in\bN_0$. Using this, we obtain that for any multi-index $\alpha$,
\begin{equation*}
    |\xi|^{|\alpha|} \left|D^\alpha_\xi \left[h_i\left(e^{-m\nu/n}(1+|\xi|^2)^{1/2}\right)\right]\right|\leq N,
\end{equation*}
where $N=N(d,n,\alpha)$ is independent of $m$ and $\nu$. Hence by the Mikhlin-H\"ormander multiplier theorem (see e.g.\ \cite[Theorem 6.2.7]{G14}),
\begin{equation*}
    \frac{1}{1+e^{m\nu}(1-\Delta)^{-n/2}}, \quad \frac{e^{m\nu}(1-\Delta)^{-n/2}}{1+e^{m\nu}(1-\Delta)^{-n/2}} 
\end{equation*}
are bounded operators on $L_p$. Thus, the second inequality of \eqref{eq9031211} follows from \eqref{eq9032336} and \eqref{eq9032337}.

 \textit{Step 2.} For $\gamma_0,\theta_0\in\bR$, we denote by $l_p^{\theta_0}(H_p^{\gamma_0})$ the space of sequences $v=(v_m)_{m\in\bZ}$ with values in $H_p^{\gamma_0}$ equipped with the norm
\begin{equation*}
\|v\|_{l_p^{\theta_0}(H_p^{\gamma_0})}^p := \sum_{m\in\bZ} e^{m\theta_0} \|v_m\|_{H_p^{\gamma_0}}^p.
\end{equation*}
Let $\zeta\in C_c^\infty(\bR_+)$ be taken from \eqref{defH}. By replacing $\zeta$ with $\zeta(x_d)/\sum_{m\in\bZ}\zeta(e^mx_d)$, we may assume that
\begin{equation} \label{eq9040101}
\sum_{m\in\bZ}\zeta(e^{m}x_d)=1, \qquad x_d>0.
\end{equation}
Let $\eta\in C_c^\infty(\bR_+)$ such that $\eta=1$ on $\text{supp}(\zeta)$, and define
\begin{equation*}
Su := \left(\Lambda_m(u(e^m\cdot)\zeta)\right)_{m\in\bZ}, \quad
Rv := \sum_{m\in\bZ}\eta(e^{-m} x_d)\,(\Lambda_m^{-1}v_m)(e^{-m}\cdot).
\end{equation*}
By Step 1, both $\Lambda_m$ and $\Lambda_m^{-1}=(1+e^{m\nu}(1-\Delta)^{-n/2})^{-1}$ are bounded on $H^{\gamma_0}_p$ for each $m\in\bZ$. Hence, for $u\in H^{\gamma_0}_{p,\theta_0}$, $Su$ is a well-defined sequence in $H^{\gamma_0}_p$, and for $v\in l^{\theta_0}_p(H^{\gamma_0}_p)$, $Rv$ is well defined since the sum is locally finite. 

In this step, we show that
\begin{equation} \label{eq9040106}
S: H^{\gamma_0}_{p,\theta_0}\cap H^{\gamma_0-n}_{p,\theta_0+\nu p} \to l^{\theta_0}_p(H^{\gamma_0}_p), \quad R: l^{\theta_0}_p(H^{\gamma_0}_p)\to H^{\gamma_0}_{p,\theta_0}\cap H^{\gamma_0-n}_{p,\theta_0+\nu p}
\end{equation}
are bounded operators. For $S$, by \eqref{defH} and \eqref{eq9031211},
\begin{align*}
    \|Su\|_{l^{\theta_0}_p(H^{\gamma_0}_p)}^p &= \sum_{m\in\bZ} e^{m\theta_0} \|\Lambda_m(u(e^m\cdot)\zeta)\|_{H_p^{\gamma_0}}^p
    \\
    &\leq N \sum_{m\in\bZ} e^{m\theta_0} \left(\|u(e^m\cdot)\zeta\|_{H_p^{\gamma_0}}^p + e^{m\nu p}\|u(e^m\cdot)\zeta\|_{H_p^{\gamma_0-n}}^p\right)
    \\
    &\leq N \|u\|_{H^{\gamma_0}_{p,\theta_0}\cap H^{\gamma_0-n}_{p,\theta_0+\nu p}}^p.
\end{align*}
Now we deal with $R$. We claim that
\begin{equation*}
\|Rv\|_{H^{\gamma_0}_{p,\theta_0} \cap H^{\gamma_0-n}_{p,\theta_0+\nu p}}\leq N \|v\|_{l^{\theta_0}_p(H^{\gamma_0}_p)}.
\end{equation*}
Let $w=(w_m)_{m\in\bZ} := (\Lambda_m^{-1}v_m)_{m\in\bZ}$. By \eqref{eq9031211},
\begin{equation*}
\|w\|^p_{l^{\theta_0}_p(H^{\gamma_0}_p)} + \|w\|^p_{l^{\theta_0+\nu p}_p(H^{\gamma_0-n}_p)} \leq N\|v\|^p_{l^{\theta_0}_p(H^{\gamma_0}_p)}.
\end{equation*}
Thus, it suffices to show that for any $\tilde{\gamma}, \tilde{\theta} \in \bR$,
\begin{equation} \label{eq9040104}
\left\| \sum_{m\in\bZ}\eta(e^{-m}x_d) w_m(e^{-m}\cdot) \right\|_{H_{p,\tilde{\theta}}^{\tilde{\gamma}}} \leq N \|w\|_{l_p^{\tilde{\theta}}(H_p^{\tilde{\gamma}})}.
\end{equation}
Since $\zeta,\eta\in C_c^\infty(\bR_+)$, there is $n_0\in\bN$ so that 
\begin{equation*}
\zeta(x_d) \left( \sum_{m\in\bZ}\eta(e^{k-m}x_d)w_m(e^{k-m}x) \right) = \sum_{|j|\leq n_0} \zeta(x_d) \eta(e^{j}x_d) w_{k-j}(e^{j}x).
\end{equation*}
Since the multiplication by $\zeta\,\eta(e^{j}\,\cdot)$ and the dilation $w\to w(e^{j}\cdot)$, $|j|\leq n_0$, are bounded on $H^{\tilde\gamma}_p$ (see e.g. \cite[Corollary 4.2.2 and Theorem 4.3.2]{Triebel2}), by \eqref{defH},
\begin{equation*}
\left\| \sum_{m\in\bZ}\eta(e^{-m}x_d) w_m(e^{-m}\cdot) \right\|_{H_{p,\tilde{\theta}}^{\tilde{\gamma}}}^p
\leq N \sum_{|j|\leq n_0} \sum_{k\in\bZ} e^{k\tilde{\theta}} \|w_{k-j}\|_{H_p^{\tilde{\gamma}}}^p
= N\sum_{|j| \leq n_0} e^{j\tilde{\theta}} \|w\|^p_{l_p^{\tilde{\theta}}(H_p^{\tilde{\gamma}})}.
\end{equation*}
Thus, we obtain \eqref{eq9040104}, which proves the claim.

\textit{Step 3.}
By \cite[Theorem 1.18.1]{T78} and $[H_p^{\gamma_1},H_p^{\gamma_2}]_\kappa=H_p^{\gamma}$, we have
\begin{equation} \label{eq9040105}
\left[l^{\theta_1}_p(H^{\gamma_1}_p),\ l^{\theta_2}_p(H^{\gamma_2}_p)\right]_\kappa = l^{\theta}_p(H^{\gamma}_p).
\end{equation}
Here and below, $[A_0,A_1]_\kappa$ denotes the complex interpolation space with the convention for $\kappa$ reversed from that in \cite{T78}; that is, $[A_0,A_1]_\kappa$ in our notation corresponds to $[A_0,A_1]_{1-\kappa}$ in \cite[Section 1.9]{T78}.
Due to \eqref{eq9040101},
\begin{equation*}
RSu = \sum_{m\in\bZ}\eta(e^{-m}x_d)\zeta(e^{-m}x_d)u = \sum_{m\in\bZ}\zeta(e^{-m}x_d) u = u.
\end{equation*}
Thus, by \eqref{eq9040106} with $(\gamma_0,\theta_0)=(\gamma_i,\theta_i)$, $i=1,2$, \eqref{eq9040105}, and \cite[Theorem 1.2.4]{T78}, the operator $S$ is an isomorphism from
\begin{equation*}
\left[H^{\gamma_1}_{p,\theta_1}\cap H^{\gamma_1-n}_{p,\theta_1+\nu p}, H^{\gamma_2}_{p,\theta_2}\cap H^{\gamma_2-n}_{p,\theta_2+\nu p}\right]_\kappa
\end{equation*}
onto $SR\left(l^{\theta}_p(H^{\gamma}_p)\right)$. Since $RS$ is the identity operator,
\begin{equation*}
    \left[H^{\gamma_1}_{p,\theta_1}\cap H^{\gamma_1-n}_{p,\theta_1+\nu p}, H^{\gamma_2}_{p,\theta_2}\cap H^{\gamma_2-n}_{p,\theta_2+\nu p}\right]_\kappa = R\left(l^{\theta}_p(H^{\gamma}_p)\right).
\end{equation*}
Finally, by \eqref{eq9040106} with $(\gamma_0,\theta_0)=(\gamma,\theta)$, we get \eqref{eq9031054}. The lemma is proved.
\end{proof}

\begin{remark}
    Lemma \ref{lem_inter} is a generalization of Lemma \ref{lem4161836} $(v)$. Indeed, \eqref{eq5071804} is recovered by taking $n=0$ and $\nu=0$ in \eqref{eq9031054}.
\end{remark}

Next, we introduce an isomorphism between $H_{p,\theta}^{\gamma+2}$ and $H_{p,\theta}^{\gamma}$.
Let us consider the degenerate operator
\begin{equation} \label{eq5090021}
    \cL_{b,c}u := x_d^2 \Delta u + bx_d D_du - cu, \quad b,c\in \bR.
\end{equation}

\begin{lemma} \label{lem4282020}
    Let $b,c\in\bR$. Then there is $c_0=c_0(d,p,\theta,\gamma,b)>0$ such that if $c>c_0$, then $\cL_{b,c}$ is bounded and invertible from $H_{p,\theta}^{\gamma+2}$ to $H_{p,\theta}^{\gamma}$.
\end{lemma}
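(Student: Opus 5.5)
Boundedness of $\cL_{b,c}$ from $H_{p,\theta}^{\gamma+2}$ to $H_{p,\theta}^{\gamma}$ is immediate from Lemma \ref{lem4161836}$(ii)$. Indeed, \eqref{equiv_der} applied twice gives
\[
\|x_d^2D_x^2u\|_{H_{p,\theta}^{\gamma}}+\|x_dD_xu\|_{H_{p,\theta}^{\gamma}}\le N\|u\|_{H_{p,\theta}^{\gamma+2}},
\]
using that $x_d^2D_{ij}u=x_dD_i(x_dD_ju)-\delta_{id}x_dD_ju$. Since multiplication by $x_d$ commutes appropriately with the dilation structure in \eqref{defH}, all terms of $\cL_{b,c}u$ are controlled. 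For invertibility, I would follow the method of continuity together with an a priori estimate, and prove existence via a change of variables reducing the problem to a whole-space elliptic equation.

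\textbf{A priori estimate.} First I establish an $L_{p,\theta}$ bound for $u\in C_c^\infty(\bR^d_+)$ with $\cL_{b,c}u=f$. I multiply by $|u|^{p-2}u\,x_d^{\theta-d}$ and integrate by parts. The term $x_d^2\Delta u$ produces the good quantity $(p-1)\int|u|^{p-2}|D_xu|^2x_d^{\theta-d+2}$ plus lower-order terms of the form $C(d,p,\theta)\int|u|^px_d^{\theta-d}$, which come from derivatives of the weight. The term $bx_dD_du$ integrates to $-\tfrac{b}{p}(\theta-d+1)\int|u|^px_d^{\theta-d}$. Hence, for $c>c_0(d,p,\theta,b)$, I get
\[
(c-c_0)\|u\|_{L_{p,\theta}}^p\le \int |f||u|^{p-1}x_d^{\theta-d},
\]
so $\|u\|_{L_{p,\theta}}\le N\|f\|_{L_{p,\theta}}$. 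To raise the regularity to $H_{p,\theta}^{\gamma+2}$, I localize as in \eqref{defH}. The function $v_m:=u(e^m\cdot)\zeta$ satisfies a uniformly elliptic equation on $\mathrm{supp}\,\zeta$ with smooth coefficients $x_d^2\delta_{ij}$ and $bx_d$ that do not depend on $m$, because $\cL_{b,c}$ is dilation invariant in $x_d$. Standard interior $H_p^\gamma$ estimates on $\bR^d$ then give
\[
\|v_m\|_{H_p^{\gamma+2}}\le N\bigl(\|(f\eta)(e^m\cdot)\|_{H_p^{\gamma}}+\|(u\eta)(e^m\cdot)\|_{H_p^{\gamma+1}}\bigr).
\]
Summing with weights $e^{m\theta}$ yields $\|u\|_{H^{\gamma+2}_{p,\theta}}\le N(\|f\|_{H^\gamma_{p,\theta}}+\|u\|_{H^{\gamma+1}_{p,\theta}})$. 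Interpolation (\eqref{eq5071804}) with $\varepsilon$-absorption reduces this to $\|u\|_{H^{\mu}_{p,\theta}}$ for very negative $\mu$. I expect the main obstacle to be handling this lower-order norm when $\gamma$ is negative. My plan is to first prove the full estimate for $\gamma\in\bN_0$ directly, differentiating the equation in the tangential variables and in $x_dD_d$, both of which commute with $\cL_{b,c}$. I then obtain negative $\gamma$ by duality (Lemma \ref{lem4161836}$(vi)$), since the formal adjoint of $\cL_{b,c}$ has the same form with modified $b',c'$ (with $c'-c$ depending only on $b$, $d$). General real $\gamma$ then follows by complex interpolation (\eqref{eq5071804}).

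\textbf{Existence.} With the a priori estimate in hand, the method of continuity along $\cL_{tb,c}$, $t\in[0,1]$, reduces existence to the case $b=0$, provided $c_0$ is taken uniform over $t$. For existence when $b=0$, I use the logarithmic change of variables $x_d=e^s$. Then $\cL_{0,c}$ becomes a constant-coefficient operator on $\bR^d$ in the variables $(x',s)$ after conjugating by a power weight. Alternatively, solvability in $L_{p,\theta}$ follows from the dual a priori estimate, which gives dense range, combined with the closed-range property. Density of $C_c^\infty$ (Lemma \ref{lem4161836}$(i)$) then extends the result to all of $H^{\gamma+2}_{p,\theta}$.
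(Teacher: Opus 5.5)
Your boundedness argument and your a priori bounds for $\gamma\ge0$ are fine. The weighted energy identity together with the dilation-invariant localization already closes the $H^{\gamma+2}_{p,\theta}$ estimate for every real $\gamma\ge0$, so the extra differentiation step is unnecessary. It is also wrong as stated: $x_dD_d$ does not commute with $\cL_{b,c}$, since $[x_dD_d,\cL_{b,c}]=2x_d^2\Delta_{x'}$.

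The gap is surjectivity, which is the real content of the lemma; the paper simply quotes Krylov's Theorems 4.4 and 5.2 for it. Neither of your existence routes works as stated.
\begin{itemize}
\item The substitution $x_d=e^s$ turns $x_d^2D_d^2$ into $D_s^2-D_s$, but it turns $x_d^2\Delta_{x'}$ into $e^{2s}\Delta_{x'}$. Conjugation by a power weight only changes lower-order terms, so for $d\ge2$ you never reach constant coefficients. Indeed, the principal symbol $x_d^2|\xi|^2$ is that of the hyperbolic metric, which is not flat.
\item The dense-range route is circular. An annihilator of the range of $\cL_{b,c}:H^{\gamma+2}_{p,\theta}\to H^{\gamma}_{p,\theta}$, $\gamma\ge0$, is some $\phi\in H^{-\gamma}_{p',\theta'}$ with $\cL_{4-b,c+b-2}\phi=0$ (this is the formal adjoint). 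To kill it with an a priori estimate you need that estimate at level $-\gamma-2\le-2$. You planned to get negative levels by duality, but dualizing an estimate for $\cL_{4-b,c+b-2}$ at level $-\gamma-2$ requires solving $\cL_{b,c}u=\psi$ in $H^{\gamma+2}_{p,\theta}$ for every $\psi\in H^{\gamma}_{p,\theta}$. That is exactly what is being proved.
\item The same issue is hidden in extending from $\gamma=0$ to $\gamma>0$ ``by density'': that step needs a regularity result, not an a priori bound.
\end{itemize}

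What is missing is a regularity (rather than a priori) version of your localization step.
\begin{itemize}
\item $\cL_{4-b,c+b-2}$ is invariant under $x\mapsto e^mx$, and near $\mathrm{supp}\,\zeta$ it agrees with one fixed uniformly elliptic operator on $\bR^d$ with smooth bounded coefficients. Whole-space elliptic regularity applied to $\zeta\phi(e^m\cdot)$, with constants independent of $m$, therefore gives $\|\phi\|_{H^{s+1}_{p',\theta'}}\le N\|\phi\|_{H^{s}_{p',\theta'}}$ for every $s$, with the left side finite whenever the right side is.
\item Hence $\phi\in H^{2}_{p',\theta'}$. The $L_{p',\theta'}$ energy estimate for $\cL_{4-b,c+b-2}$ then forces $\phi=0$, once $c_0$ is enlarged so that $c+b-2$ also exceeds the energy threshold for $(p',\theta',4-b)$.
\item Closed range plus dense range gives invertibility for all $\gamma\ge0$.
\item For $\gamma\le-2$, $\cL_{b,c}$ is the Banach adjoint of the isomorphism $\cL_{4-b,c+b-2}:H^{-\gamma}_{p',\theta'}\to H^{-\gamma-2}_{p',\theta'}$.
\item For $\gamma\in(-2,0)$, interpolate the inverses between $\gamma=0$ and $\gamma=-2$. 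They are compatible because $\cL_{b,c}$ is injective on $L_{p,\theta}$.
\end{itemize}
With this, neither the method of continuity nor the logarithmic substitution is needed.
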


\begin{proof}
    See \cite[Theorems 4.4 and 5.2]{Kry_some}.
\end{proof}

\subsection{General weighted Sobolev spaces}

We prove several properties of $H_{p,\theta}^{\gamma,(n)}$ corresponding to Lemma \ref{lem4161836}, dividing the proof into several parts. Recall that
\begin{equation*}
    H_{p,\theta}^{\gamma,(n)} = \{u: u, D_xu, \cdots, D_x^n u\in H_{p,\theta}^{\gamma-n}\}, \quad n\in \bN_0.
\end{equation*}

\begin{lemma}
Let $p\in(1,\infty)$, $n\in\bN_0$, and $\gamma,\theta\in\bR$.
Then the space $H_{p,\theta}^{\gamma,(n)}$ is a Banach space.
\end{lemma}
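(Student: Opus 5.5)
Completeness is the only real issue: $H_{p,\theta}^{\gamma,(n)}$ is a normed linear space, since the norm $\sum_{k=0}^n\|D_x^ku\|_{H_{p,\theta}^{\gamma-n}}$ is a sum of seminorms and its $k=0$ term is already a norm. The plan is to reduce completeness to that of the base space $H_{p,\theta}^{\gamma-n}$ from Lemma \ref{lem4161836} $(i)$. The link is continuity of distributional differentiation in $\cD'(\bR^d_+)$, together with the continuous embedding of $H_{p,\theta}^{\gamma-n}$ into $\cD'(\bR^d_+)$.

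Take a Cauchy sequence $(u_j)$ in $H_{p,\theta}^{\gamma,(n)}$. For each multi-index $\beta$ with $|\beta|\le n$, the sequence $(D^\beta u_j)$ is Cauchy in $H_{p,\theta}^{\gamma-n}$, since $\|D^\beta u_j - D^\beta u_i\|_{H_{p,\theta}^{\gamma-n}}$ is controlled by the norm. By Lemma \ref{lem4161836} $(i)$ there are limits $v_\beta\in H_{p,\theta}^{\gamma-n}$ with $D^\beta u_j\to v_\beta$. Set $u:=v_0$.

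The key step is to identify $v_\beta=D^\beta u$ in $\cD'(\bR^d_+)$. For this I use that convergence in $H_{p,\theta}^{\mu}$ implies convergence in $\cD'(\bR^d_+)$. This can be seen from the duality in Lemma \ref{lem4161836} $(vi)$: for $\phi\in C_c^\infty(\bR^d_+)\subset H^{-\mu}_{p',\theta'}$ we have $|(w,\phi)|\le \|w\|_{H_{p,\theta}^\mu}\|\phi\|_{H^{-\mu}_{p',\theta'}}$. Alternatively, it follows directly from \eqref{defH}: a test function's support meets only finitely many of the dilated cutoffs, and on each of them the pairing is bounded by the $H_p^\mu$ norm. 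Then, for each test function $\phi$,
\[
(D^\beta u,\phi)=(-1)^{|\beta|}(u,D^\beta\phi)=\lim_{j}(-1)^{|\beta|}(u_j,D^\beta\phi)=\lim_j (D^\beta u_j,\phi)=(v_\beta,\phi).
\]
Hence $D^\beta u=v_\beta\in H_{p,\theta}^{\gamma-n}$, so $u\in H_{p,\theta}^{\gamma,(n)}$ and $\|u_j-u\|_{H_{p,\theta}^{\gamma,(n)}}\to0$.

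The only point needing care is justifying that $H_{p,\theta}^{\mu}\hookrightarrow\cD'(\bR^d_+)$ continuously; I would cite the duality statement $(vi)$. For the $l_2$-valued version the same argument applies verbatim via $(vii)$.
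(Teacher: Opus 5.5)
Your proposal is correct and follows the paper's proof: take the limits $v_\beta$ of $D^\beta u_j$ in the Banach space $H^{\gamma-n}_{p,\theta}$, then identify $v_\beta=D^\beta v_0$ by pairing with test functions. Your justification that convergence in $H^{\mu}_{p,\theta}$ implies convergence in $\cD'(\bR^d_+)$, via the duality in Lemma \ref{lem4161836} $(vi)$ or directly from \eqref{defH}, fills in a step the paper simply calls obvious.
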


\begin{proof}
    Suppose that $u_l$ is a Cauchy sequence in $H_{p,\theta}^{\gamma,(n)}$. Since $H_{p,\theta}^{\gamma-n}$ is a Banach space, we can put $v^r:= \lim_{l\to\infty} D_x^ru_l$ for each $r=0,1,\dots,n$. Obviously,
    \begin{equation*}
        (v^r,\phi) = \lim_{l\to\infty} (D_x^ru_l,\phi) = (-1)^r \lim_{l\to\infty} (u_l,D_x^r\phi) = (-1)^r (v^0,D_x^r\phi),
    \end{equation*}
    which implies that $D_x^rv^0 = v^r \in H_{p,\theta}^{\gamma-n}$. Thus, $v^0=\lim_{l\to\infty}u_l \in H_{p,\theta}^{\gamma,(n)}$. The lemma is proved.
\end{proof}

Next, we aim to formulate the trace theorem for the weighted Sobolev spaces. By a trace operator, we usually mean a map that evaluates boundary values, such as $u \mapsto u(\cdot,0)$. 
For higher regularity, it is natural to expect that the boundary values of derivatives should also be taken into account. More precisely, for $H_{p,\theta}^{\gamma,(n)}$, we consider the trace operator of the form
\begin{equation} \label{eq5251711}
T_ru(x'):=\left(u(x',0),\ldots,D_d^{r-1}u(x',0)\right),
\end{equation}
where $r$ is determined by $\theta$, $\gamma$, and $n$.
To define this operator on the entire space, we first establish the density of sufficiently regular functions and then extend the operator by continuity.

We first state a known result for the special case $\gamma=n$. We recall the Slobodeckij space of order $s\in (0,\infty)\setminus\bN$, denoted by $W_p^s(\bR^{d-1})$, which is the set of functions $u\in L_p(\bR^{d-1})$ such that
\begin{equation*}
    \sum_{r\leq \lfloor s \rfloor}\|D_x^ru\|_{L_p(\bR^{d-1})} + [D_x^{\lfloor s \rfloor}u]_{W_p^{s-\lfloor s \rfloor}(\bR^{d-1})}<\infty,
\end{equation*}
where $\lfloor s \rfloor$ is the greatest integer less than or equal to $s$, and
\begin{equation*}
    [v]_{W_p^{\sigma}(\bR^{d-1})}:= \left( \int_{\bR^{d-1}}\int_{\bR^{d-1}} \frac{|v(x')-v(y')|^p}{|x'-y'|^{d-1+\sigma p}} dy'dx' \right)^{1/p}, \quad \sigma\in(0,1).
\end{equation*}

\begin{lemma} \label{lem5061739}
    Suppose that $p\in(1,\infty)$, $n\in\bN$, and $\theta\in(d-1,\infty)$.

    (i) The set $C_c^\infty(\overline{\bR^d_+})$ is dense in $H_{p,\theta}^{n,(n)}$.

    (ii) Assume that $\theta\in(d-1,d-1+p)$. For each $k\in\bN_0$, the operator $T_n$ extends to a bounded linear operator from $H_{p,\theta+kp}^{n+k,(n+k)}$ onto
    \begin{equation} \label{eq5061624}
        \prod_{s=0}^{n-1}W_{p}^{n-\frac{\theta-d+1}{p}-s}(\bR^{d-1}).
    \end{equation}
    Moreover, there is an extension operator $S_{n,k}$ from \eqref{eq5061624} to $H_{p,\theta+kp}^{n+k,(n+k)}$ such that $T_nS_{n,k}=I$, where $I$ denotes the identity operator.
\end{lemma}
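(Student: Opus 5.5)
The plan is to reduce the statement to classical facts about the power-weighted Sobolev spaces $W_p^n(\bR^d_+,x_d^{\alpha})$ with $\alpha=\theta-d$. By the remark after the definition of $H_{p,\theta}^{\gamma,(n)}$, we have $H^{n,(n)}_{p,\theta}=W_p^n(\bR^d_+,x_d^{\theta-d})$ with equivalent norms. The hypothesis $\theta>d-1$ means $\alpha>-1$, so the weight is locally integrable up to the boundary. The extra condition $\theta<d-1+p$ in (ii) means $|x_d|^{\alpha}$ is a Muckenhoupt $A_p$ weight on $\bR^d$.

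For (i), first cut off at infinity with $\eta(x/R)$. The commutator terms carry factors $R^{-j}$ and vanish in $L_{p,\theta}$ as $R\to\infty$, so we may assume bounded support. Then split according to the sign of $\alpha$.

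If $\alpha\ge 0$, use inward translation $u_\varepsilon(x)=u(x',x_d+\varepsilon)$. Since $x_d^{\alpha}\le (x_d+\varepsilon)^{\alpha}$, translation is a contraction on every $D^ju$ in $L_{p,\theta}$. It is strongly continuous by density of $C_c(\bR^d_+)$ in $L_{p,\theta}$. After translating, $u_\varepsilon$ is $W^n_p$ up to the boundary, and a standard mollification in $\bR^d$ after a Hestenes reflection finishes this case.

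If $-1<\alpha<0$, extend $u$ to $\bR^d$ by a higher-order Hestenes reflection $Eu(x',-x_d)=\sum_{i=1}^{n+1}c_iu(x',\lambda_ix_d)$. This is bounded into $W^n_p(\bR^d,|x_d|^{\alpha})$. Then mollify; convergence holds because $|x_d|^{\alpha}\in A_p$, so the maximal function controls mollifiers.

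For (ii) with $k=0$, I would invoke the classical trace theorem for $W^n_p(\bR^d_+,x_d^{\alpha})$ with $-1<\alpha<p-1$ (Uspenski\u{\i}; see also Triebel). It says that $u\mapsto D_d^su(\cdot,0)$, $s\le n-1$, maps onto $W_p^{n-s-(1+\alpha)/p}(\bR^{d-1})$, and that a bounded right inverse exists. The value of the exponent, $n-s-(1+\alpha)/p=n-\frac{\theta-d+1}{p}-s$, matches \eqref{eq5061624}. The extension is defined first on $C_c^\infty(\overline{\bR^d_+})$ and then by density from (i).

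For general $k$, the trace part reduces to $k=0$. Using a boundary cutoff $\eta_1(x_d)$, the tail away from the boundary is irrelevant for traces. Near the boundary, apply the weighted one-dimensional Hardy inequality in $x_d$ (or Lemma \ref{lem4161836}(iii) together with \eqref{eq5062159}) $k$ times. Each application trades one derivative for a decrease of the weight exponent by $p$, giving
\begin{equation*}
\|\eta_1 D^{j}u\|_{L_{p,\theta+(i-1)p}}\le N\bigl(\|\eta_1 D^{j+1}u\|_{L_{p,\theta+ip}}+\|u\|_{H^{n+k,(n+k)}_{p,\theta+kp}}\bigr).
\end{equation*}
This requires the intermediate exponents $\theta+ip-d$ to avoid the critical value $p-1$ where Hardy fails. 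That is guaranteed by $\theta\in(d-1,d-1+p)$, though the direction of integration (from $0$ or from a point away from the boundary) must be chosen accordingly. The conclusion is that $\eta_1u\in H^{n,(n)}_{p,\theta}$, so the trace is defined and bounded by the $k=0$ case.

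The main obstacle I expect is the extension operator $S_{n,k}$. It must produce functions whose derivatives of orders $n+1,\dots,n+k$ lie in the higher-weight spaces $L_{p,\theta+kp}$, which the classical extension need not guarantee. I would construct it explicitly:
\begin{equation*}
S_{n,k}(g_0,\dots,g_{n-1})(x)=\psi(x_d)\sum_{s=0}^{n-1}\frac{x_d^s}{s!}\,(\Phi_{x_d}*g_s)(x'),
\end{equation*}
where $\Phi_t$ is a smooth $L_1$-normalized kernel at scale $t$ in $x'$ with vanishing moments adjusted so that $T_n S_{n,k}=I$. The key estimates are $|x_d^{j}D^{n+j}(x_d^s\Phi_{x_d}*g_s)|$ in $L_{p,\theta+jp}$ for all $j\ge0$. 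Each extra derivative falling on $\Phi_{x_d}$ costs exactly a factor $x_d^{-1}$, which the extra weight $x_d^{p}$ absorbs. The remaining weighted integral is then a Littlewood--Paley-type characterization of the Besov norm of $g_s$ of order $n-s-\frac{\theta-d+1}{p}$. Showing that this characterization holds uniformly in $j$ is the technical core.
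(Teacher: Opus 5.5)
Your proof is correct, but it takes a more hands-on route than the paper. The paper proves (i) purely by citation (Kufner, Theorem 7.2 and Remark 11.12(iii); see also LLRV25). For (ii) it makes a single observation. By (i), $H^{n+k,(n+k)}_{p,\theta+kp}$ is exactly Triebel's space $\widetilde W^{n+k}_{p,x_d^{\theta-d+kp}}(\bR^d_+)$, and Triebel's Theorem 2.9.2/2 applies to it directly with weight exponent $\alpha+kp$ (where $\alpha=\theta-d$) and order $n+k$. That theorem is not restricted to the $A_p$ range $-1<\alpha<p-1$. The shift $kp$ in the weight exactly compensates the $k$ extra derivatives:
\begin{itemize}
\item the admissible trace indices are the $s$ with $s<(n+k)-\frac{1+\alpha+kp}{p}=n-\frac{\theta-d+1}{p}$, so there are still $n$ of them;
\item the trace spaces $W_p^{(n+k)-s-(1+\alpha+kp)/p}=W_p^{n-s-\frac{\theta-d+1}{p}}$ are unchanged.
\end{itemize}
So the trace bound and the extension operator $S_{n,k}$ come for free for every $k$, and the ``main obstacle'' you identify does not arise. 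Your extra work comes from assuming the classical trace theorem holds only for $-1<\alpha<p-1$.

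Your route essentially re-proves that classical theorem in the higher-weight case: translation or Hestenes reflection plus $A_p$ mollification for (i), Hardy's inequality to reduce the trace for $k\ge1$ to $k=0$, and an explicit Poisson-type extension. What it buys is self-containedness, and a single extension operator, independent of $k$, that is bounded into all $H^{n+k,(n+k)}_{p,\theta+kp}$ simultaneously. The paper's route buys brevity.

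Three small corrections to your sketch:
\begin{itemize}
\item The Hardy step needs only $\theta>d-1$. With the cutoff $\eta_1$ you integrate from infinity, and the exponents $\theta-d+(i-1)p\ge\theta-d>-1$ never reach a critical value, so $\theta<d-1+p$ plays no role there.
\item The key estimate should read $D^{n+j}S_{n,k}g\in L_{p,\theta+jp}$, or equivalently $x_d^jD^{n+j}S_{n,k}g\in L_{p,\theta}$.
\item There is no uniformity issue in $j$. Put $n-s-1$ tangential derivatives on $g_s$; normal derivatives of $\Phi_{x_d}*g_s$ can be converted into tangential ones via $\partial_t\Phi_t=-t^{-1}(\nabla\cdot(z\Phi))_t$. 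Put the remaining derivatives on the kernel, each costing $x_d^{-1}$. A power count then shows that every $j$ leads to the same integral $\int_0^1 t^{\theta-d-p}\|\tilde\Phi_t*D_{x'}^{n-s-1}g_s\|_{L_p}^p\,dt$ with $\int\tilde\Phi=0$. This is controlled by the $B^{1-(\theta-d+1)/p}_{p,p}$ norm of $D_{x'}^{n-s-1}g_s$.
\end{itemize}
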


\begin{proof}
    $(i)$ We refer the reader to \cite[Theorem 7.2 and Remark 11.12 $(iii)$]{K85}. See also \cite[Lemma 3.4]{LLRV25}.

    $(ii)$ Due to $(i)$, we see that $H_{p,\theta+kp}^{n+k,(n+k)}$ is the same as $\widetilde{W}_{p,x_d^{\theta-d+kp}}^{n+k}(\bR^d_+)$ introduced in \cite[Definition 2.9.2/2]{T78}, whose definition already includes the density of smooth functions. Thus, the claim follows from \cite[Theorem 2.9.2/2]{T78}. The lemma is proved.
\end{proof}

In the following lemma, we derive some consequences of Hardy's inequality.

\begin{lemma}
Suppose that $p\in(1,\infty)$, $n\in\bN$, and $\theta\in(d-1,\infty)$. 

(i) Assume further that $\theta\in(d-1,d-1+p)$. Let $u\in H_{p,\theta}^{n,(n)}$ satisfy $T_nu=0$ in \eqref{eq5061624}. Then $u$ can be approximated by functions in $C_c^\infty(\bR^d_+)$, and
\begin{align} \label{eq5061659}
    \sum_{r=0}^{n}\|x_d^{r-n}D_x^ru\|_{L_{p,\theta}} \leq N\|u\|_{H_{p,\theta}^{n,(n)}},
\end{align}
where $N=N(d,p,\theta,n)$.
In particular, $u\in H_{p,\theta-np}^{n}$.

(ii) For any $k\in\bN_0$, we have
\begin{equation} \label{eq5071421}
    H_{p,\theta+kp}^{n+k,(n+k)} \subset H_{p,\theta}^{n+k,(n)}.
\end{equation}
\end{lemma}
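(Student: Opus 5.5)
For part (i), I would first take $u\in C_c^\infty(\overline{\bR^d_+})$ with $D_d^s u(x',0)=0$ for $s=0,\dots,n-1$. Such functions are dense in $\{u\in H_{p,\theta}^{n,(n)}:T_nu=0\}$. To see this, start from an arbitrary $v_j\in C_c^\infty(\overline{\bR^d_+})$ converging to $u$ (Lemma \ref{lem5061739}(i)). Then subtract $S_{n,0}T_nv_j$, with $S_{n,0}$ from Lemma \ref{lem5061739}(ii), whose norm tends to $0$ because $T_nv_j\to T_nu=0$. One has to check that the corrector can be taken smooth with vanishing traces. If needed, I would instead approximate $T_nv_j$ by smooth boundary data and use an explicit extension of the form $\sum_s \frac{x_d^s}{s!}g_s(x')\psi(x_d)$.

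For such smooth $u$, I would derive \eqref{eq5061659} from the one-dimensional Hardy inequality in $x_d$ with weight $x_d^{\theta-d}$. Since $D_x^r u$ has vanishing $D_d$-derivatives up to order $n-r-1$ at $x_d=0$, I would apply Hardy $n-r$ times to $D_x^r u$. Each application reads
\[
\int_0^\infty |x_d^{-j-1}w|^p x_d^{\theta-d}\,dx_d\le N\int_0^\infty |x_d^{-j}D_dw|^p x_d^{\theta-d}\,dx_d,
\]
valid for $w(0)=0$ as long as the exponent satisfies $\theta-d-jp\neq -1$, i.e. $\theta-jp\ne d-1$. For $j\ge 0$ this holds because $\theta-(j+1)p<d-1<\theta-jp$, so every step uses the ``boundary vanishing'' direction of Hardy. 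Iterating from $D_d^{n-r}D_x^r u$ down gives $\|x_d^{r-n}D_x^ru\|_{L_{p,\theta}}\le N\|D_x^n u\|_{L_{p,\theta}}$, and the estimate passes to the closure. Since these bounds control $x_d^{-n}\cdot x_d^rD^r_xu$ in $L_{p,\theta}$ for all $r\le n$, \eqref{eq5062159} and the integer characterization of $H^n_{p,\theta}$ give $u\in H^n_{p,\theta-np}$.

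Approximation by $C_c^\infty(\bR^d_+)$ then follows by the standard cutoff $u\,\eta(x_d/\varepsilon)(1-\eta(\varepsilon x_d))$ together with a cutoff in $x'$. The error terms with derivatives on the cutoff are bounded by $\|x_d^{r-n}D_x^ru\|_{L_{p,\theta}}$ restricted to $x_d\sim\varepsilon$, which vanishes as $\varepsilon\to 0$ by \eqref{eq5061659} and dominated convergence. The corresponding terms near infinity are harmless.

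For part (ii), I must show that for $u\in H^{n+k,(n+k)}_{p,\theta+kp}$ each $D_x^ru$, $r\le n$, lies in $H^{k}_{p,\theta}$. That means $x_d^mD_x^{m+r}u\in L_{p,\theta}$ for $m\le k$, i.e. $x_d^{m-k}D_x^{m+r}u\in L_{p,\theta+kp}$. When $m=k$ this is immediate. For $m<k$ I would use Hardy in the form of Lemma \ref{lem4161836}(iii) repeatedly, bounding $\|x_d^{-j-1}w\|_{L_{p,\theta+kp}}$ by $\|x_d^{-j}D_xw\|_{L_{p,\theta+kp}}$. This form needs no boundary condition provided the relevant weight avoids $d-1+p$; that is the ``decay at infinity'' Hardy. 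The weight exponents here are $\theta+kp-jp>d-1+p$ for $j<k$, because $\theta>d-1$. Therefore the inequality holds for functions vanishing near infinity, which can be arranged on a dense class via Lemma \ref{lem5061739}(i).

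The main obstacle is the density and corrector step in (i): making sure that approximants with vanishing traces exist in $C_c^\infty(\overline{\bR^d_+})$. Once that is settled, everything else is iterated Hardy with careful tracking of the weight exponents.
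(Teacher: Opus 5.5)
Your Hardy iteration in (i) and your whole argument for (ii) are essentially what the paper does; for (ii) this means the decay-at-infinity Hardy inequality on the dense class $C_c^\infty(\overline{\bR^d_+})$, with weights $\theta+(k-j)p>d-1+p$.

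The gap is the step you flag yourself. You never actually obtain a dense class of smooth, trace-free functions in $\{u\in H^{n,(n)}_{p,\theta}:T_nu=0\}$. Without it, the Hardy estimate, proved for smooth functions with $D_d^su(x',0)=0$, does not transfer to a general $u$ with $T_nu=0$.

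Neither of your two suggestions closes this.
\begin{itemize}
\item The corrected approximants $v_j-S_{n,0}T_nv_j$ do lie in $\ker T_n$ and converge to $u$. However, nothing you know about the abstract operator $S_{n,0}$ from Lemma \ref{lem5061739}(ii) makes them smooth. You would have to open up Triebel's construction. Alternatively, you could run the one-dimensional Hardy inequality on almost every line $x'=\mathrm{const}$, after proving that the continuously extended $T_n$ coincides with the boundary limits along those lines.
\item The explicit fallback $E(g_0,\dots,g_{n-1})=\sum_s\frac{x_d^s}{s!}g_s(x')\psi(x_d)$ fails outright. Applying $D^n_{x'}$ to its $s$-th term gives $\frac{x_d^s}{s!}\psi(x_d)D^n_{x'}g_s(x')$, whose $L_{p,\theta}$-norm is a fixed constant times $\|D^n_{x'}g_s\|_{L_p(\bR^{d-1})}$. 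Hence $E$ is not bounded from $\prod_sW_p^{n-\frac{\theta-d+1}{p}-s}(\bR^{d-1})$ into $H^{n,(n)}_{p,\theta}$. So $T_nv_j\to0$ in the trace norm does not force $E(T_nv_j)\to0$.
\end{itemize}
A usable explicit extension must regularize at scale $x_d$. For example, take $\sum_s\frac{x_d^s}{s!}\psi(x_d)(\phi_{x_d}*g_s)(x')$ with $\phi\in C_c^\infty(\bR^{d-1})$, $\int\phi=1$, and vanishing moments of orders $1,\dots,n-1$. Its boundedness would still have to be proved.

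The paper disposes of this step by citing \cite[Theorem 2.9.2/2(b)]{T78}. That result says precisely that the kernel of $T_n$ in this weighted space is the closure of $C_c^\infty(\bR^d_+)$. This gives the first assertion of (i) directly. The Hardy iteration is then carried out only for $u\in C_c^\infty(\bR^d_+)$, so your closing cutoff argument, which is fine, is not needed.

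A minor slip in your Hardy step: the critical case is $\theta-(j+1)p=d-1$ (equivalently $\theta-d-jp=p-1$), not $\theta-jp=d-1$. The inequality $d-1<\theta-jp$ that you write is false for $j\ge1$. Only $\theta-(j+1)p<d-1$ is needed, and that one does hold.
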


\begin{proof}
    $(i)$ 
    The first claim follows from \cite[Theorem 2.9.2/2(b)]{T78}. Thus, we may assume that $u\in C_c^\infty(\bR^d_+)$. Let us consider $D_{d}^{r_d}D_{x'}^{r'}u$, where $r=r_d+r'\leq n-1$.
    Due to one-dimensional Hardy's inequality (see e.g. Theorem 5.2 in the preface of \cite{K85}), for each $x'\in \bR^{d-1}$,
    \begin{align*}
        &\int_{0}^{\infty} |(D_{d}^{r_d}D_{x'}^{r'}u)(x',x_d)|^p x_d^{\theta-d+(r-n)p} dx_d
        \\
        &\leq N \int_{0}^{\infty} |(D_{d}^{r_d+1}D_{x'}^{r'}u)(x',x_d)|^p x_d^{\theta-d+(r-n+1)p} dx_d.
    \end{align*}
    Here, we note that $(D_{d}^{r_d}D_{x'}^{r'}u)(x',0)=0$ and $\theta-d+(r-n+1)p\leq \theta-d<p-1$, which allow us to use Hardy's inequality.
    Iterating this inequality and integrating both sides with respect to $x'$, we get
    \begin{align*}
        \|x_d^{r-n}D_x^ru\|_{L_{p,\theta}} \leq N \|D_x^nu\|_{L_{p,\theta}} \leq  N\|u\|_{H_{p,\theta}^{n,(n)}},
    \end{align*}
    which yields \eqref{eq5061659}.

    $(ii)$
By Lemma \ref{lem5061739} $(i)$ we may assume that $u\in C_c^\infty(\overline{\bR^d_+})\subset H_{p,\theta+kp}^{n+k,(n+k)}$. Since $\theta-d>-1$ and $\lim_{x_d\to\infty}u(x',x_d)=0$, we can use Hardy's inequality repeatedly to obtain that for $r_1\leq k$ and $r_2\leq n$,
\begin{align*}
    \|x_d^{r_1}D_x^{r_1+r_2}u\|_{L_{p,\theta}} \leq N \|x_d^kD_x^{r_2+k}u\|_{L_{p,\theta}} \leq N\|u\|_{H_{p,\theta+kp}^{n+k,(n+k)}}.
\end{align*}
    This obviously implies \eqref{eq5071421}.
    The lemma is proved.
\end{proof}

Now we are ready to consider $H_{p,\theta}^{\gamma,(n)}$ for general $\gamma\geq n$.

\begin{lemma} \label{lem5071526}
    Suppose that $p\in(1,\infty)$, $n\in\bN$, $\gamma\geq n$, and $\theta\in(d-1,d-1+p)$. Then $C_c^\infty(\overline{\bR^d_+})$ is dense in $H_{p,\theta}^{\gamma,(n)}$.
\end{lemma}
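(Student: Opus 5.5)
The plan is to reduce the case of general $\gamma\geq n$ to the integer case $\gamma=n$ of Lemma \ref{lem5061739} $(i)$. This is done by a mollification/cutoff argument that respects the dilation structure of $H_{p,\theta}^{\gamma-n}$. The key observation is that $u\in H_{p,\theta}^{\gamma,(n)}$ means $D_x^ku\in H_{p,\theta}^{\gamma-n}$ for $k\le n$, and $\gamma-n\geq0$. Hence each $D_x^ku\in L_{p,\theta}$, so $u\in H_{p,\theta}^{n,(n)}$, where smooth functions up to the boundary are already dense. The difficulty is that approximation in $H_{p,\theta}^{n,(n)}$ controls only the $L_{p,\theta}$ norms of the derivatives, not the stronger $H_{p,\theta}^{\gamma-n}$ norms. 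So I would not simply quote Lemma \ref{lem5061739}; I would build the approximation explicitly.

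\emph{Step 1 (truncation).} Let $\eta_R(x)=\eta(x/R)$ be a cutoff equal to $1$ on $B_R$. Write $D_x^k(\eta_Ru)-\eta_RD_x^ku$ as a sum of terms $R^{-j}(D^j\eta)(\cdot/R)D_x^{k-j}u$. Multiplication by $\eta_R$ and by $R^{-j}(D^j\eta)(x/R)$ is bounded on $H_{p,\theta}^{\gamma-n}$, uniformly in $R\ge1$. This holds because after the dilation $x\mapsto e^mx$ these functions have uniformly bounded $C^\ell$ norms on $\operatorname{supp}\zeta$ (for $e^m\lesssim R$; for large $m$ one uses $x_d$-weights, since $R^{-j}\le x_d^{-j}$ is not available). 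Combined with density of $C_c^\infty(\bR^d_+)$ in $H_{p,\theta}^{\gamma-n}$ and dominated convergence in the sum over $m$, this gives $\eta_Ru\to u$. So I may assume $u$ has bounded support in $x'$ and $x_d$.

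\emph{Step 2 (tangential and normal smoothing).} I would mollify tangentially by convolution in $x'$ with $\phi_\varepsilon(x')$, which commutes with $D_x$ and with the dilations in \eqref{defH}. It is therefore a contraction-type operator converging strongly on $H_{p,\theta}^{\gamma-n}$, and this handles $x'$-regularity. For the normal direction, I would use a shift $u^{(h)}(x',x_d):=u(x',x_d+h)$, restricted to $x_d>0$, followed by a mollification supported in $\{x_d<0\}$, as in the classical Sobolev half-space argument. Because $D_x^k u^{(h)}=(D_x^ku)^{(h)}$, the problem reduces to showing that translation $v\mapsto v(\cdot+he_d)$ is uniformly bounded and strongly continuous on $H_{p,\theta}^{\gamma-n}$ for $h\in(0,1)$. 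On functions in $C_c^\infty(\bR^d_+)$ this is clear, since for small $h$ the support stays in a compact set away from the boundary. Boundedness is the real issue.

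\emph{Main obstacle.} The main obstacle is the uniform boundedness of the normal shift on $H_{p,\theta}^{\gamma-n}$ with $\gamma-n$ non-integer, because the shift breaks the dyadic dilation structure near $x_d=0$. I would first establish boundedness for $\gamma-n\in\bN_0$ directly from the characterization via $x_d^jD_x^j$ in $L_{p,\theta}$. In that case $x_d\le x_d+h$ and $\theta-d+jp\ge \theta-d>-1$ make the weights monotone, possibly after using Hardy's inequality of Lemma \ref{lem4161836} $(iii)$ for the part with negative powers. I would then interpolate between consecutive integers using Lemma \ref{lem4161836} $(v)$ with $\theta_1=\theta_2=\theta$. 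If the shift estimate fails near the boundary, my fallback is to interpolate density itself. By Lemma \ref{lem_inter}, $H_{p,\theta}^{\gamma,(n)}$ is the complex interpolation space of the integer spaces $H_{p,\theta}^{m,(n)}$ and $H_{p,\theta}^{m+1,(n)}$; this identification is where Lemma \ref{lem_inter} with $\nu=0$ enters, applied to the pieces $D_x^k u$. The smaller endpoint space is dense in the interpolation space, so it suffices to handle integer $\gamma$. For integer $\gamma$ I would use \eqref{eq5071421} together with Lemma \ref{lem5061739} $(i)$ to obtain density of $C_c^\infty(\overline{\bR^d_+})$.
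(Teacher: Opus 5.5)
Your truncation (Step~1) and the tangential mollification are fine. The argument breaks where Step~2 reduces the normal direction to uniform boundedness of the bare shift $v\mapsto v(\cdot+he_d)$ on $H^{\gamma-n}_{p,\theta}$: this is false for $\theta\in(d-1,d)$. Already at order zero, $\|v(\cdot+he_d)\|_{L_{p,\theta}}^p=\int_{y_d>h}|v(y)|^p(y_d-h)^{\theta-d}\,dy$. Since $\theta-d<0$, the weight $(y_d-h)^{\theta-d}$ blows up on the interior hyperplane $\{y_d=h\}$. The condition $\theta-d>-1$ gives integrability, not the monotonicity you invoke (for $\theta\ge d$ your argument does work). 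Hardy's inequality can absorb the zero-order term only when $x_dD_dv$ is also controlled. So you get uniform bounds on $H^m_{p,\theta}$ for integers $m\ge1$ but not for $m=0$, and you cannot interpolate when $0<\gamma-n<1$. This is not a technicality. Suppose $0<s:=\gamma-n<(d-\theta)/p$, choose $a$ with $(\theta-d+1)/p\le a<1/p-s$, and let $u=\chi(y)|y_d-h|^{n-a}$ with $\chi\in C_c^\infty(\bR^d_+)$ equal to $1$ near a point of $\{y_d=h\}$. Then $u\in H^{\gamma}_p(\bR^d)$ has compact support in $\bR^d_+$, so $u\in H^{\gamma,(n)}_{p,\theta}$. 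However, $D_d^nu(\cdot+he_d)$ behaves like $x_d^{-a}$ near part of $\{x_d=0\}$ and is not in $L_{p,\theta}$. So the shift does not even preserve the space, and your main route does not cover the case $\theta\in(d-1,d)$, $\gamma-n\in(0,1)$.

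Your fallback does not close this gap. Lemma~\ref{lem_inter} with $\nu=0$ is just Lemma~\ref{lem4161836}~(v), and interpolating the components $D_x^ku$ separately gives only the easy embedding $[H^{n,(n)}_{p,\theta},H^{n+1,(n)}_{p,\theta}]_\kappa\hookrightarrow H^{\gamma,(n)}_{p,\theta}$. To transfer density you need the converse, i.e.\ that every $u\in H^{\gamma,(n)}_{p,\theta}$ lies in the interpolation space. Since $H^{\gamma,(n)}_{p,\theta}$ is a closed subspace of a product, this requires a retraction.

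Building that retraction is the missing idea, and it is the core of the paper's proof. Split $u=(u-S_{n,k+1}T_nu)+S_{n,k+1}T_nu$ using the trace and extension operators of Lemma~\ref{lem5061739}. The second piece lies in $H^{n+k+1,(n+k+1)}_{p,\theta+(k+1)p}$ and is approximated via Lemma~\ref{lem5061739}~(i) and \eqref{eq5071421}. The zero-trace piece $w$ satisfies the Hardy bound \eqref{eq5061659}, which puts $x_d^{-r_1}D_x^{r_2}w$ in $H^{\gamma-n}_{p,\theta}$. This makes the logarithmic cutoffs $\eta(l^{-1}\log x_d)$ converge and reduces to strip-supported functions, where the norm is that of $H^\gamma_p(\bR^d)$. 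The same splitting, with Lemma~\ref{lem_inter} at weight $\theta-np$ and $\nu=n$, is what gives the interpolation identity of Lemma~\ref{lem5090008}.

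Your integer-order step is also valid only thanks to Step~1. For integer $\gamma$, a boundedly supported $u\in H^{\gamma,(n)}_{p,\theta}$ lies in $H^{\gamma,(\gamma)}_{p,\theta+(\gamma-n)p}$, whereas \eqref{eq5071421} alone embeds only that (in general proper) subspace.

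Alternatively, you can repair your own approach without traces by replacing the bare shift with the one-sided mollifier $P_\varepsilon v:=v*\varphi_\varepsilon$, where $\varphi\ge0$ and $\operatorname{supp}\varphi\subset\{|y'|<1,\,-2<y_d<-1\}$. The bound $|P_\varepsilon v|\le N\cM\tilde v$ (with $\tilde v$ the zero extension) gives uniform bounds on $L_{p,\theta}$ because $|x_d|^{\theta-d}\in A_p$. Since $x_d<(x-y)_d$ on the support, $|x_d^jD^jP_\varepsilon v|\le P_\varepsilon(x_d^j|D^jv|)$. This yields uniform bounds on every $H^m_{p,\theta}$, and by Lemma~\ref{lem4161836}~(v) on $H^s_{p,\theta}$ for all $s\ge0$. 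Together with strong convergence on $C_c^\infty(\bR^d_+)$, commutation with $D_x$, and your Step~1, this proves the lemma.
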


\begin{proof}
    Let $k\in\bN_0$ be such that $k\leq \gamma-n<k+1$. By Lemma \ref{lem5061739} $(ii)$, we can consider the extension operator
    \begin{equation} \label{eq5071615}
        S_{n,k+1}: \prod_{s=0}^{n-1}W_{p}^{n-\frac{\theta-d+1}{p}-s}(\bR^{d-1}) \to H_{p,\theta +(k+1)p}^{n+k+1,(n+k+1)}.
    \end{equation}
    Note that $H_{p,\theta}^{\gamma,(n)} \subset H_{p,\theta}^{n,(n)}$, which shows that $T_n$ is a well defined operator from $H_{p,\theta}^{\gamma,(n)}$ to the domain of $S_{n,k+1}$.
    Let $u\in H_{p,\theta}^{\gamma,(n)}$, and define
    \begin{equation*}
        v:=S_{n,k+1}T_nu \in H_{p,\theta+(k+1)p}^{n+k+1,(n+k+1)}.
    \end{equation*}
     Then obviously, $T_nv=T_nu$.
    To prove the desired result, we show that $w:=u-v$ and $v$ can each be approximated by functions in $C_c^\infty(\overline{\bR^d_+})$. Here, we note that $u$ and $v$ are in the same space $H_{p,\theta}^{\gamma,(n)}$ due to \eqref{eq5071421}, i.e., the subtraction $w=u-v$ is well defined.

Let us deal with $w$. Take $\eta(x_d)\in C^\infty(\bR)$ such that $\eta=1$ for $|x_d|\geq2$, and $\eta=0$ for $|x_d|\leq 1$. Denote $\eta_l(x_d):= \eta(l^{-1}\log x_d)$. We claim that for $r\leq n$,
        \begin{align*}
        \|D_x^{r}((1-\eta_l) w)-D_x^{r}w\|_{H_{p,\theta}^{\gamma-n}} = \|D_x^{r}(\eta_l w)\|_{H_{p,\theta}^{\gamma-n}} \to0 \text { as } l\to \infty.
    \end{align*}
    To show this, it suffices to prove that for $r_1+r_2\leq n$,
        \begin{align*}
        \lim_{l\to\infty} \|D_d^{r_1}\eta_l D_x^{r_2}w\|_{H_{p,\theta}^{\gamma-n}} = 0.
    \end{align*}
        Let $\zeta \in C_c^\infty(\bR_+)$ satisfying \eqref{eq5062057}. Then on the support of $\zeta$, every derivative of $e^{mr_1}(D_d^{r_1}\eta_l)(e^mx_d)$ is uniformly bounded with respect to $m$ and $l$. By a standard multiplier inequality (see e.g. \cite[Corollary 4.2.2]{Triebel2}),
        \begin{align*}
            \|(D_x^{r_1}\eta_l)(e^m\cdot) (D_x^{r_2}w)(e^m\cdot) \zeta\|_{H_p^{\gamma-n}} \leq Ne^{-mr_1}\|(D_x^{r_2}w)(e^m\cdot) \zeta\|_{H_p^{\gamma-n}}.
        \end{align*}
        Notice that for each $m\in \bZ$, $(D_x^{r_1}\eta_l)(e^m\cdot) \zeta$ vanishes when $l$ is sufficiently large. Therefore, by the dominated convergence theorem, \eqref{defH}, and \eqref{eq5062159}, it suffices to show that
        \begin{equation} \label{eq5062216}
            \sum_{m\in\bZ} e^{m(\theta-r_1p)}\|(D_x^{r_2}w)(e^m\cdot) \zeta\|_{H_p^{\gamma-n}}^p \leq N \|x_d^{-r_1}D_x^{r_2}w\|_{H_{p,\theta}^{\gamma-n}}^p <\infty.
        \end{equation}
        By applying \eqref{eq5062157}, for $k=0,1,\dots,r_1-1$,
        \begin{equation*}
            \|x_d^{-r_1+k}D_x^{r_2+k}w\|_{H_{p,\theta}^{\gamma-n}} \leq N \left( \|x_d^{-r_1+k}D_x^{r_2+k}w\|_{L_{p,\theta}} + \|x_d^{-r_1+k+1}D_x^{r_2+k+1}w\|_{H_{p,\theta}^{\gamma-n}} \right).
        \end{equation*}
        Using this, one can deduce that
        \begin{align} \label{eq5102343}
            \|x_d^{-r_1}D_x^{r_2}w\|_{H_{p,\theta}^{\gamma-n}} &\leq N \sum_{k=0}^{r_1} \|x_d^{-r_1+k} D_x^{r_2+k}w\|_{L_{p,\theta}} + N\|D_x^{r_1+r_2}w\|_{H_{p,\theta}^{\gamma-n}}.
        \end{align}
        Since $w\in H_{p,\theta}^{\gamma,(n)} \subset H_{p,\theta}^{n,(n)}$ and $T_nw=0$, by \eqref{eq5061659}, for $0\leq k\leq r_1-1$,
        \begin{align*}
            \|x_d^{-r_1+k} D_x^{r_2+k}w\|_{L_{p,\theta}} \leq N \|w\|_{H_{p,\theta}^{r_1+r_2,(r_1+r_2)}} \leq N \|w\|_{H_{p,\theta}^{n,(n)}}.
        \end{align*}
        This together with \eqref{eq5102343} yields that
        \begin{align*}
            \|x_d^{-r_1}D_x^{r_2}w\|_{H_{p,\theta}^{\gamma-n}} &\leq N \|w\|_{H_{p,\theta}^{\gamma,(n)}}<\infty,
        \end{align*}
        which proves \eqref{eq5062216}. Therefore, $w$ can be approximated by functions whose supports are contained in strips $a<x_d<b$, with $(a,b)$ varying from function to function.

        When $w$ is supported in a strip, say $x_d\in (a,b)$, then it can be regarded as an element of $\cD'(\bR^d)$, and we have
        \begin{align*}
            \|w\|_{H_{p,\theta}^{\gamma,(n)}} = \sum_{r=0}^{n} \|D_x^rw\|_{H_{p,\theta}^{\gamma-n}} \approx \sum_{r=0}^{n} \|D_x^rw\|_{H_{p}^{\gamma-n}(\bR^d)} \approx \|w\|_{H_p^{\gamma}(\bR^d)}.
        \end{align*}
        See also \cite[Remark 1.12]{Khalf}. This implies that $w$ can be approximated by functions in $C_c^\infty(\bR^d_+)$. This proves the assertion for $w$.

        For $v$, by Lemma \ref{lem5061739} $(i)$, $v$ can be approximated by functions in $C_c^\infty(\overline{\bR^d_+})$ in the space $H_{p,\theta+(k+1)p}^{n+k+1,(n+k+1)}$. Thus, \eqref{eq5071421} leads to the desired result.
The lemma is proved.        
\end{proof}

Below we state the trace theorem for $H_{p,\theta}^{\gamma,(n)}$ when $\gamma\geq n$.

\begin{lemma}
    Suppose that $p\in(1,\infty)$, $n\in\bN$, $\gamma\geq n$, and $\theta\in(d-1,d-1+p)$. The operator $T_n$ extends to a bounded linear operator from $H_{p,\theta}^{\gamma,(n)}$ onto
    \begin{equation} \label{eq5061616}
        \prod_{s=0}^{n-1}W_{p}^{n-\frac{\theta-d+1}{p}-s}(\bR^{d-1}).
    \end{equation}
        Moreover, there is an extension operator $S_{n,\gamma}$ from \eqref{eq5061616} to $H_{p,\theta}^{\gamma,(n)}$ such that $T_nS_{n,\gamma} =I$.
\end{lemma}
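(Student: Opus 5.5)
The plan is to reduce everything to the case $\gamma=n$ (Lemma \ref{lem5061739}) using the density result of Lemma \ref{lem5071526}, and the embeddings \eqref{eq5071421}. First, boundedness: since $\gamma\geq n$, we have the continuous inclusion $H_{p,\theta}^{\gamma,(n)}\subset H_{p,\theta}^{n,(n)}$, because $\|D_x^r u\|_{H_{p,\theta}^{n-n}}=\|D_x^ru\|_{L_{p,\theta}}\leq N\|D_x^ru\|_{H_{p,\theta}^{\gamma-n}}$ by monotonicity of $H^{\gamma}_{p,\theta}$ in $\gamma$. For $u\in C_c^\infty(\overline{\bR^d_+})$, Lemma \ref{lem5061739} $(ii)$ with $k=0$ therefore gives
\begin{equation*}
\sum_{s=0}^{n-1}\|D_d^su(\cdot,0)\|_{W_p^{n-\frac{\theta-d+1}{p}-s}(\bR^{d-1})}\leq N\|u\|_{H_{p,\theta}^{n,(n)}}\leq N\|u\|_{H_{p,\theta}^{\gamma,(n)}}.
\end{equation*}
By the density of $C_c^\infty(\overline{\bR^d_+})$ in $H_{p,\theta}^{\gamma,(n)}$ (Lemma \ref{lem5071526}), $T_n$ extends uniquely by continuity to a bounded operator on $H_{p,\theta}^{\gamma,(n)}$; moreover this extension coincides with the restriction of the trace operator already defined on $H_{p,\theta}^{n,(n)}$, since both are continuous on $H^{\gamma,(n)}_{p,\theta}$ and agree on a dense set. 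This consistency is worth stating explicitly because it was used implicitly in the proof of Lemma \ref{lem5071526}.

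Next, the extension operator and surjectivity. Choose $k\in\bN_0$ with $k+1>\gamma-n$, e.g.\ $k$ as in the proof of Lemma \ref{lem5071526}, and set $S_{n,\gamma}:=S_{n,k+1}$ from \eqref{eq5071615}. It maps \eqref{eq5061616} boundedly into $H_{p,\theta+(k+1)p}^{n+k+1,(n+k+1)}$, which by \eqref{eq5071421} embeds continuously into $H_{p,\theta}^{n+k+1,(n)}$, and this in turn embeds into $H_{p,\theta}^{\gamma,(n)}$ since $n+k+1\geq\gamma$. Hence $S_{n,\gamma}$ is bounded into $H^{\gamma,(n)}_{p,\theta}$. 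It remains to check $T_nS_{n,\gamma}=I$, where now $T_n$ is the extension on $H^{\gamma,(n)}_{p,\theta}$. Lemma \ref{lem5061739} $(ii)$ gives $T_nS_{n,k+1}=I$ with $T_n$ understood on $H_{p,\theta+(k+1)p}^{n+k+1,(n+k+1)}$, so I must show that the traces on the various spaces are compatible. Take $v$ in the big space and approximate it by $v_j\in C_c^\infty(\overline{\bR^d_+})$ in that space (Lemma \ref{lem5061739} $(i)$). Then $v_j\to v$ also in $H_{p,\theta}^{\gamma,(n)}$ by the embeddings just listed. On each $v_j$ all trace operators are the literal restriction $(v_j(\cdot,0),\dots,D_d^{n-1}v_j(\cdot,0))$. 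Continuity of both operators then gives agreement at $v$. Surjectivity follows immediately from $T_nS_{n,\gamma}=I$.

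The only delicate point I expect is this compatibility of the traces defined through different density arguments. Via the common smooth approximants it is routine, provided the embeddings \eqref{eq5071421} and $H^{\gamma,(n)}_{p,\theta}\subset H^{n,(n)}_{p,\theta}$ are continuous rather than mere set inclusions. Their proofs above give norm bounds, so this holds.
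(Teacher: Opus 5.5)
Your proposal is correct and follows essentially the same route as the paper. You obtain $T_n$ on $H_{p,\theta}^{\gamma,(n)}$ from the trace on $H_{p,\theta}^{n,(n)}$ through the continuous inclusion, identify it with the extension from $C_c^\infty(\overline{\bR^d_+})$ via Lemma \ref{lem5071526}, and take $S_{n,\gamma}:=S_{n,k+1}$, which is bounded by \eqref{eq5071421} and satisfies $T_nS_{n,\gamma}=I$ by Lemma \ref{lem5061739}~$(ii)$. Your explicit check, via common smooth approximants, that the traces defined on the different spaces coincide is a detail the paper leaves implicit, and you handle it correctly.
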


\begin{proof}
Note that $H_{p,\theta}^{\gamma,(n)} \subset H_{p,\theta}^{n,(n)}$ and $T_n$ is well defined on $H_{p,\theta}^{n,(n)}$. Therefore, $T_n$ is also well defined on $H_{p,\theta}^{\gamma,(n)}$. In particular, by Lemma \ref{lem5071526}, the operator is obtained as the extension of \eqref{eq5251711} on $C_c^\infty(\overline{\bR^d_+})$.

Let $k\in\bN_0$ be such that $k\leq \gamma-n<k+1$. For $h$ in \eqref{eq5061616} and $S_{n,\gamma}:=S_{n,k+1}$ defined in \eqref{eq5071615}, using \eqref{eq5071421},
\begin{align*}
    \|S_{n,\gamma} h\|_{H_{p,\theta}^{\gamma,(n)}} \leq \|S_{n,k+1} h\|_{H_{p,\theta}^{n+k+1,(n)}} \leq N \|S_{n,k+1} h\|_{H_{p,\theta +(k+1)p}^{n+k+1,(n+k+1)}}.
\end{align*}
This implies that $S_{n,\gamma}$ is bounded from \eqref{eq5061616} to $H_{p,\theta}^{\gamma,(n)}$. Finally, $T_nS_{n,\gamma}=I$ follows from $S_{n,\gamma}=S_{n,k+1}$ and Lemma \ref{lem5061739} $(ii)$.
The lemma is proved.
\end{proof}

We now turn to the interpolation result.

\begin{lemma} \label{lem5090008}
    Let $p\in(1,\infty)$, $n\in\bN$, $\kappa\in(0,1)$, $\gamma_i\geq n$, $i=1,2$, $\theta\in(d-1,d-1+p)$, and $\gamma=\kappa\gamma_1 + (1-\kappa)\gamma_2$.
Then we have
\begin{align*}
H_{p,\theta}^{\gamma,(n)}=[H_{p,\theta}^{\gamma_1,(n)},H_{p,\theta}^{\gamma_2,(n)}]_\kappa,
\end{align*}
where $[A,B]_\kappa$ is the complex interpolation space of $A$ and $B$.
\end{lemma}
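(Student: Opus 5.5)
The plan is to decompose $H_{p,\theta}^{\gamma,(n)}$ into a zero-trace part and a trace part that does not depend on $\gamma$. The zero-trace part will be identified with an intersection of homogeneous weighted spaces covered by Lemma \ref{lem_inter}. The result then follows from the retraction--coretraction principle (\cite[Theorem 1.2.4]{T78}).

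\textbf{Step 1: a common extension operator.} Write $W:=\prod_{s=0}^{n-1}W_{p}^{n-\frac{\theta-d+1}{p}-s}(\bR^{d-1})$; this trace space does not depend on $\gamma$. Fix a single $k\in\bN_0$ with $k+1>\max\{\gamma_1,\gamma_2\}-n$ and set $S:=S_{n,k+1}$ from \eqref{eq5071615}. By Lemma \ref{lem5061739} $(ii)$ and \eqref{eq5071421}, $S$ maps $W$ boundedly into $H^{n+k+1,(n)}_{p,\theta}$, which embeds into $H^{\gamma_i,(n)}_{p,\theta}$ for $i=1,2$ and into $H^{\gamma,(n)}_{p,\theta}$. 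Moreover $T_nS=I$, and $T_n$ is bounded from each $H^{\gamma_i,(n)}_{p,\theta}$ onto $W$ by the preceding trace lemma.

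\textbf{Step 2: the zero-trace subspace.} Let $K^{\gamma}:=\{u\in H^{\gamma,(n)}_{p,\theta}: T_nu=0\}$. I claim that
\begin{equation*}
K^\gamma = H^{\gamma}_{p,\theta-np}\cap H^{\gamma-n}_{p,\theta}
\end{equation*}
with equivalent norms.

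For the inclusion $\subset$: by \eqref{eq5061659}, each $x_d^{r-n}D_x^ru$ lies in $L_{p,\theta}$, so $u\in H^n_{p,\theta-np}$. Then apply \eqref{eq5062157} and \eqref{eq5062159} repeatedly, as in the derivation of \eqref{eq5102343} with $r_1=n$ and $r_2=0$. This yields $\|x_d^{-n}u\|_{H^{\gamma-n}_{p,\theta}}\le N\|u\|_{H^{\gamma,(n)}_{p,\theta}}$, that is, $u\in H^{\gamma-n}_{p,\theta-np}$. Combined with $x_d^kD_x^ku\in H^{\gamma-n}_{p,\theta-np}$, this gives $u\in H^\gamma_{p,\theta-np}$ via \eqref{equiv_der} iterated. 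Clearly $u\in H^{\gamma-n}_{p,\theta}$ as well.

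For the inclusion $\supset$: if $u\in H^{\gamma}_{p,\theta-np}$, then $D_x^ru\in H^{\gamma-r}_{p,\theta-np+rp}$ by \eqref{equiv_der} and \eqref{eq5062159}. Interpolating this with $u\in H^{\gamma-n}_{p,\theta}$ through \eqref{eq5071804} (or by the same iteration) places $D_x^ru$ in $H^{\gamma-n}_{p,\theta}$. The vanishing of the trace follows by approximating with $C_c^\infty(\bR^d_+)$ functions, which are dense in the intersection by the $S,R$ construction in the proof of Lemma \ref{lem_inter}.

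This step, in particular keeping the norm equivalence uniform, is where I expect the main work to lie. With the identification in hand, Lemma \ref{lem_inter} applied with $\theta_1=\theta_2=\theta-np$, $\nu=n$ and the given $n$ yields $[K^{\gamma_1},K^{\gamma_2}]_\kappa=K^\gamma$.

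\textbf{Step 3: retraction.} Define $P u:=(u-ST_nu,\,T_nu)$, which maps $H^{\gamma_i,(n)}_{p,\theta}$ into $K^{\gamma_i}\times W$. Define $Q(w,h):=w+Sh$. Both maps are bounded for $i=1,2$ and for $\gamma$, with the same formulas in each case, and $QP=I$. By \cite[Theorem 1.2.4]{T78}, $[H^{\gamma_1,(n)}_{p,\theta},H^{\gamma_2,(n)}_{p,\theta}]_\kappa$ equals $Q$ applied to $[K^{\gamma_1}\times W,K^{\gamma_2}\times W]_\kappa=K^\gamma\times W$. That image is exactly $H^{\gamma,(n)}_{p,\theta}$, since $Q$ is onto it (again by $QP=I$ at level $\gamma$). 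This proves the lemma.
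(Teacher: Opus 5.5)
Your proposal is correct and is essentially the paper's argument: split $u=(u-S_nT_nu)+S_nT_nu$, place the zero-trace part in $H^{\gamma}_{p,\theta-np}\cap H^{\gamma-n}_{p,\theta}$ using the Hardy bounds \eqref{eq5061659} and the iteration behind \eqref{eq5102343}, interpolate that intersection with Lemma \ref{lem_inter}, and conclude by a retraction argument. The differences are only cosmetic. The paper takes $H^{n+k,(n+k)}_{p,\theta+kp}$ rather than the trace space as the second factor. Its map $\cS(v_1,v_2)=v_1+v_2$ acts on the whole intersection, so it never has to identify the intersection with the zero-trace subspace, and thus skips your density step. Finally, it proves the easy inclusion separately via $u\mapsto(u,D_xu,\dots,D_x^nu)$.
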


\begin{proof}
    First, we prove $[H_{p,\theta}^{\gamma_1,(n)},H_{p,\theta}^{\gamma_2,(n)}]_\kappa \subset H_{p,\theta}^{\gamma,(n)}$. Let $Lu:=(u,D_xu,\cdots, D_x^nu)$ be a bounded operator such that
    \begin{equation*}
        L:H_{p,\theta}^{\gamma_i,(n)} \to \prod_{|\alpha|\leq n} H_{p,\theta}^{\gamma_i-n}, \quad i=1,2.
    \end{equation*}
    By the complex interpolation of operators (see e.g. \cite[Theorem C.2.6]{HNVW16}) and \eqref{eq5071804}, $L$ is a bounded operator from $[H_{p,\theta}^{\gamma_1,(n)},H_{p,\theta}^{\gamma_2,(n)}]_\kappa$ to
    \begin{equation*}
\prod_{|\alpha|\leq n} [H_{p,\theta}^{\gamma_1-n},H_{p,\theta}^{\gamma_2-n}]_\kappa = \prod_{|\alpha|\leq n} H_{p,\theta}^{\gamma-n},
    \end{equation*}
    which implies that
    \begin{equation*}
        \|u\|_{H_{p,\theta}^{\gamma,(n)}} = \sum_{r=0}^{n} \|D_x^ru\|_{H_{p,\theta}^{\gamma-n}} \leq N \|u\|_{[H_{p,\theta}^{\gamma_1,(n)},H_{p,\theta}^{\gamma_2,(n)}]_\kappa}.
    \end{equation*}
    Thus, the first part is proved.

    Next, we deal with $H_{p,\theta}^{\gamma,(n)} \subset [H_{p,\theta}^{\gamma_1,(n)},H_{p,\theta}^{\gamma_2,(n)}]_\kappa$.
    Let $u\in H_{p,\theta}^{\gamma,(n)}$.
     Take $k\in \bN$ satisfying $n+k\geq \max\{\gamma_1,\gamma_2\}$, and consider the trace and extension operators
    \begin{equation*}
        T_n: H_{p,\theta}^{\gamma_i,(n)} \to \prod_{s=0}^{n-1}W_{p}^{n-\frac{\theta-d+1}{p}-s}(\bR^{d-1})
    \end{equation*}
    and
    \begin{equation*}
        S_n = S_{n,k}: \prod_{s=0}^{n-1}W_{p}^{n-\frac{\theta-d+1}{p}-s}(\bR^{d-1}) \to H_{p,\theta+kp}^{n+k,(n+k)},
    \end{equation*}
    which is well defined due to Lemma \ref{lem5061739}. Here, we note that $S_n$ is independent of $i=1,2$. 
    For $u\in H_{p,\theta}^{\gamma_i,(n)}$, by \eqref{eq5071421}, $S_nT_nu \in H_{p,\theta}^{\gamma_i,(n)}$ and $T_n(u-S_nT_nu)=0$ in \eqref{eq5061616}.
    By \eqref{eq5062157} and \eqref{eq5061659}, for any $w\in H_{p,\theta}^{\gamma,(n)}$ such that $T_nw=0$,
    \begin{align} \label{eq9050029}
        \|w\|_{H_{p,\theta-np}^{\gamma}} \leq N \sum_{m=0}^{n-1} \|x_d^{m-n}D_x^mw\|_{L_{p,\theta}} + N \|D_x^nw\|_{H_{p,\theta}^{\gamma-n}} \leq N\|w\|_{H_{p,\theta}^{\gamma,(n)}}.
    \end{align}
    Here, for the last inequality, we used $H_{p,\theta}^{\gamma,(n)} \subset H_{p,\theta}^{n,(n)}$.
    Thus, one can see that $u-S_nT_nu\in H_{p,\theta-np}^{\gamma_i}$. 
    Thus,
    \begin{equation*}
        \cT:H_{p,\theta}^{\gamma_i,(n)} \to \left(H_{p,\theta-np}^{\gamma_i} \cap H_{p,\theta}^{\gamma_i-n}\right) \times H_{p,\theta+kp}^{n+k,(n+k)}, \quad \cT u:=(u-S_nT_nu,S_nT_nu)
    \end{equation*}
    is a bounded linear operator. Let
    \begin{equation*}
        \cS: \left(H_{p,\theta-np}^{\gamma_i} \cap H_{p,\theta}^{\gamma_i-n}\right) \times H_{p,\theta+kp}^{n+k,(n+k)} \to H_{p,\theta}^{\gamma_i,(n)}, \quad \cS ((v_1,v_2))=v_1+v_2.
    \end{equation*}
    Now we verify that $\cS$ is well defined.
    For $m=1,\dots,n-1$, by \eqref{eq5071804} with $(\theta-np,\theta, \gamma_i,\gamma_i-n,m/n)$ in place of $(\theta_1,\theta_2,\gamma_1,\gamma_2,\kappa)$,
    \begin{equation*}
        H_{p,\theta-mp}^{\gamma_i-n+m} = [H_{p,\theta-np}^{\gamma_i}, H_{p,\theta}^{\gamma_i-n}]_{m/n}.
    \end{equation*}
    This and \eqref{equiv_der} imply that for $m=0,\dots,n$,
    \begin{align*}
        \|D_x^mu\|_{H_{p,\theta}^{\gamma_i-n}} &\leq N \|x_d^mD_x^mu\|_{H_{p,\theta-mp}^{\gamma_i-n}} 
        \\
        &\leq N \|u\|_{H_{p,\theta-mp}^{\gamma_i-n+m}} \leq N\left( \|u\|_{H_{p,\theta-np}^{\gamma_i}} + \|u\|_{H_{p,\theta}^{\gamma_i-n}} \right).
    \end{align*}
This together with \eqref{eq5071421} yields that $\cS$ is a well defined bounded linear operator.

Note that $\cS \cT u=u$. Moreover, by \eqref{eq9031054},
\begin{align*}
    Z &:= \left[\left(H_{p,\theta-np}^{\gamma_1} \cap H_{p,\theta}^{\gamma_1-n}\right) \times H_{p,\theta+kp}^{n+k,(n+k)}, \left(H_{p,\theta-np}^{\gamma_2} \cap H_{p,\theta}^{\gamma_2-n}\right) \times H_{p,\theta+kp}^{n+k,(n+k)}\right]_{\kappa}
    \\
    &= \left( H_{p,\theta-np}^{\gamma} \cap H_{p,\theta}^{\gamma-n}\right) \times H_{p,\theta+kp}^{n+k,(n+k)}.
\end{align*}
    Thus, using the complex interpolation of operators (see e.g. \cite[Theorem C.2.6]{HNVW16}),
    \begin{align} \label{eq5090005}
        \|u\|_{[H_{p,\theta}^{\gamma_1,(n)},H_{p,\theta}^{\gamma_2,(n)}]_\kappa} &\leq N \|\cT u\|_{Z} \nonumber
        \\
        &= N \|u-S_nT_nu\|_{H_{p,\theta-np}^{\gamma}} + N \|u-S_nT_nu\|_{H_{p,\theta}^{\gamma-n}} \nonumber
        \\
        &\quad+ N \|S_nT_nu\|_{H_{p,\theta+kp}^{n+k,(n+k)}}.
    \end{align}
    Applying \eqref{eq9050029} to $u-S_nT_nu$, and using \eqref{eq5071421}, we get
    \begin{align*}
        &\|u-S_nT_nu\|_{H_{p,\theta-np}^{\gamma}} + \|u-S_nT_nu\|_{H_{p,\theta}^{\gamma-n}} + \|S_nT_nu\|_{H_{p,\theta+kp}^{n+k,(n+k)}}
        \\
        &\leq N \left( \|u-S_nT_nu\|_{H_{p,\theta}^{\gamma,(n)}} + \|S_nT_nu\|_{H_{p,\theta+kp}^{n+k,(n+k)}} \right) 
        \\
        &\leq N \left( \|u\|_{H_{p,\theta}^{\gamma,(n)}} + \|S_nT_nu\|_{H_{p,\theta+kp}^{n+k,(n+k)}} \right) \leq N \|u\|_{H_{p,\theta}^{\gamma,(n)}}.
    \end{align*}
    For the last inequality, we used that both $T_n$ and $S_n$ are bounded operators.
    This and \eqref{eq5090005} imply that $u\in [H_{p,\theta}^{\gamma_1,(n)},H_{p,\theta}^{\gamma_2,(n)}]_\kappa$. The lemma is proved.
\end{proof}

We next extend the density and interpolation results to arbitrary $\gamma\in\bR$. To this end, we show that if $c>0$ is sufficiently large, then $\cL_{b,c}$ in \eqref{eq5090021} is an isomorphism between $H_{p,\theta}^{\gamma+2,(n)}$ and $H_{p,\theta}^{\gamma,(n)}$.

\begin{lemma} \label{lem4161837}
    Let $p\in(1,\infty)$,  $\gamma\in\bR$, and $n\in\bN$. 
    
    (i) Let $\theta\in(d-1,d-1+p)$. Then the space $C_c^\infty(\overline{\bR^d_+})$ is dense in $H_{p,\theta}^{\gamma,(n)}$. Moreover, the set of all $l_2$-valued functions $u=(u^1,u^2,\cdots)$ such that $u^k=0$ for all sufficiently large $k$, and each component $u^k \in C_c^\infty(\overline{\bR^d_+})$, is dense in $H_{p,\theta}^{\gamma,(n)}(l_2)$.

    (ii) 
    Let $b,c,\theta\in\bR$. Then there is $c_0=c_0(d,p,\theta,\gamma,b,n)>0$ such that if $c>c_0$, then $\cL_{b,c}$ is bounded and invertible from $H_{p,\theta}^{\gamma+2,(n)}$ to $H_{p,\theta}^{\gamma,(n)}$.
    In addition, the assertion $(ii)$ also holds true for $H_{p,\theta}^{\gamma,(n)}(l_2)$ in place of $H_{p,\theta}^{\gamma,(n)}$.

    (iii) (Interpolation)
Let $\kappa\in(0,1)$, $\theta\in(d-1,d-1+p)$, $\gamma_i\in\bR$, $i=1,2$, and $\gamma=\kappa\gamma_1 + (1-\kappa)\gamma_2$.
Then we have
\begin{align} \label{eq5112341}
H_{p,\theta}^{\gamma,(n)} = [H_{p,\theta}^{\gamma_1,(n)},H_{p,\theta}^{\gamma_2,(n)}]_\kappa,
\end{align}
where $[A,B]_\kappa$ is the complex interpolation space of $A$ and $B$.
\end{lemma}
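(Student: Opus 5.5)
The plan is to prove (ii) first, since it is the tool that moves the density and interpolation results from $\gamma\geq n$, where Lemmas \ref{lem5071526} and \ref{lem5090008} already apply, to arbitrary $\gamma\in\bR$.

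\emph{Proof of (ii).} The key identity is a commutator formula. Since $x_d$ appears only through $x_d^2$ and $x_d$, we have
\begin{equation*}
D_x^r\cL_{b,c}u=\cL_{b,c}D_x^ru+\sum_{j<r} x_d\,(\text{const})\,D_x^{j+1}D_x^{r-j-1}\cdots,
\end{equation*}
and more precisely
\begin{equation*}
D_d(\cL_{b,c}u)=\cL_{b+2,c-b}(D_du)+\ldots,
\end{equation*}
where the remaining term is $2x_d\Delta u$. Tangential derivatives commute with $\cL_{b,c}$. This suggests running an induction on $r\le n$.

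Boundedness from $H_{p,\theta}^{\gamma+2,(n)}$ to $H_{p,\theta}^{\gamma,(n)}$ means showing $D_x^r(x_d^2\Delta u),\,D_x^r(x_dD_du)\in H^{\gamma-n}_{p,\theta}$ for $r\le n$. Expanding by Leibniz gives terms $x_d^2D^{r+2}u$, $x_dD^{r+1}u$ and $D^r u$. Each is controlled by $\|D^{r+2}u\|_{H^{\gamma-n}_{p,\theta}}$ and its lower-order analogues, using $\|x_d^2 v\|_{H^{\gamma-n}_{p,\theta}}\approx \|v\|_{H^{\gamma-n}_{p,\theta+2p}}$ from \eqref{eq5062159}. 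It then remains to compare $H^{\gamma+2-n}_{p,\theta}$-type norms, which follows from \eqref{equiv_der}, i.e. $\|x_dD v\|_{H^{\gamma-n}}\le N\|v\|_{H^{\gamma+1-n}}$. Here the $n$ derivatives of $u$ in $H^{\gamma+2-n}$ supply the needed room.

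For invertibility, the candidate is the inverse of $\cL_{b,c}$ on $H^{\gamma+2-n}_{p,\theta}\to H^{\gamma-n}_{p,\theta}$ from Lemma \ref{lem4282020}, taking $c$ large. Given $f\in H^{\gamma,(n)}_{p,\theta}$, let $u=\cL_{b,c}^{-1}f\in H^{\gamma+2-n}_{p,\theta}$. Then show inductively that $D^r_xu\in H^{\gamma+2-n}_{p,\theta}$:
\begin{itemize}
\item Tangential derivatives $D_{x'}u$ solve the same equation with data $D_{x'}f$, so uniqueness in $H^{\gamma+1-n}_{p,\theta}$ together with the invertibility at that level identifies $D_{x'}u=\cL_{b,c}^{-1}D_{x'}f$.
\item For $D_du$, one has
\begin{equation*}
\cL_{b+2,c-b}(D_du)=D_df-2x_d\Delta u ,
\end{equation*}
and the right side lies in $H^{\gamma-n}_{p,\theta}$ because $x_d\Delta u\in H^{\gamma-n}_{p,\theta}$. (Alternatively one can use $x_d\partial_{dd}$ pieces together with tangential regularity already obtained.) Applying Lemma \ref{lem4282020} with $b+2$ and $c-b$, which requires $c_0$ depending on $b$ and $n$, gives $D_du\in H^{\gamma+2-n}_{p,\theta}$, and uniqueness identifies it with the distributional derivative.
\end{itemize}
Iterating up to order $n$ gives $u\in H^{\gamma+2,(n)}_{p,\theta}$ together with the estimate. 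Uniqueness in $H^{\gamma+2,(n)}_{p,\theta}$ follows from $H^{\gamma+2,(n)}_{p,\theta}\subset H^{\gamma+2-n}_{p,\theta}$. The $l_2$ case is identical, using the $l_2$ version of Lemma \ref{lem4282020}.

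\emph{Main obstacle.} The delicate step is the bookkeeping in the commutator estimates. I must check that the lower-order terms such as $x_d D_{dd}u$ are genuinely one degree better in the $(\gamma,\theta)$-scale, so that they are absorbed. I would handle this by treating $x_dD_x$ as an operator of order one in $H^{\cdot}_{p,\theta}$, via \eqref{equiv_der}.

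\emph{Proofs of (i) and (iii).} Choose $k\in\bN$ with $\gamma+2k\ge n$, and set $\cL^k:=\cL_{b,c}^k$, an isomorphism $H^{\gamma+2k,(n)}_{p,\theta}\to H^{\gamma,(n)}_{p,\theta}$.
\begin{itemize}
\item For (i): $\cL^k$ maps $C_c^\infty(\overline{\bR^d_+})$ into $C_c^\infty(\overline{\bR^d_+})$. Hence, by Lemma \ref{lem5071526}, the image of a dense set under the surjection $\cL^k$ is dense.
\item For (iii): apply the isomorphism $\cL^k$ simultaneously to both endpoints, with $k$ large enough that $\gamma_i+2k\ge n$. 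Since the same operator is an isomorphism at each level, interpolation of operators carries Lemma \ref{lem5090008} at levels $\gamma_i+2k$ over to levels $\gamma_i$. This is the standard retraction argument.
\end{itemize}
The $l_2$ density claim follows in the same way.
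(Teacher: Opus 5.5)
The overall architecture matches the paper. You prove (ii) by commuting derivatives through $\cL_{b,c}$ and invoking Lemma \ref{lem4282020} for shifted operators. You then obtain (i) and (iii) by transporting Lemmas \ref{lem5071526} and \ref{lem5090008} with the isomorphism $\cL_{b,c}^k$. The boundedness part of (ii), the tangential step, and your arguments for (i) and (iii) are fine. The gap is in the normal-derivative step of (ii).

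\textbf{The identity is wrong.} A direct computation gives
\[
D_d\cL_{b,c}u=\cL_{b+2,c-b}(D_du)+2x_d\Delta_{x'}u,\qquad \Delta_{x'}=\sum_{i<d}D_i^2,
\]
or equivalently $D_d\cL_{b,c}u=\cL_{b,c-b}(D_du)+2x_d\Delta u$. The shift $b\mapsto b+2$ comes precisely from absorbing $2x_dD_d(D_du)$ into the operator, so your version counts that term twice.

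\textbf{The justification is circular.} You claim $x_d\Delta u\in H^{\gamma-n}_{p,\theta}$, but this does not follow from what is known at that stage. From $u\in H^{\gamma+2-n}_{p,\theta}$, \eqref{equiv_der} and \eqref{eq5062159} only give $x_d^2D_x^2u\in H^{\gamma-n}_{p,\theta}$, that is, $D_x^2u\in H^{\gamma-n}_{p,\theta+2p}$. Controlling $x_dD_{dd}u=(x_dD_d)(D_du)$ in $H^{\gamma-n}_{p,\theta}$ through the order-one, weight-neutral operator $x_dD_x$ would require $D_du\in H^{\gamma+1-n}_{p,\theta}$. That is part of the conclusion you are after. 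So $x_dD_{dd}u$ is not one degree better in the weighted scale, and the remedy in your ``main obstacle'' paragraph does not work.

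\textbf{The fix.} Use the correct commutator, whose remainder is purely tangential. We have $2x_d\Delta_{x'}u=2\sum_{i<d}x_dD_i(D_iu)$, and $D_iu=\cL_{b,c}^{-1}D_if\in H^{\gamma+2-n}_{p,\theta}$ by your tangential step. Hence the remainder lies in $H^{\gamma+1-n}_{p,\theta}\subset H^{\gamma-n}_{p,\theta}$.

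For higher orders the paper uses
\[
D_d^{r_d}D_{x'}^{r'}\cL_{b,c}u=\cL_{b+2r_d,\,c-br_d-r_d(r_d-1)}D_d^{r_d}D_{x'}^{r'}u+2r_dx_d\Delta_{x'}D_d^{r_d-1}D_{x'}^{r'}u+r_d(r_d-1)\Delta_{x'}D_d^{r_d-2}D_{x'}^{r'}u .
\]
Each remainder trades normal derivatives for tangential ones. Consequently:
\begin{itemize}
\item The induction must run on the number $r_d$ of normal derivatives.
\item The inductive hypothesis must cover every tangential order $r'$ with $r_d+r'\le n$. The remainders at level $r_d$ need $D_d^{r_d-1}D_{x'}^{r'+1}$ and $D_d^{r_d-2}D_{x'}^{r'+2}$ of $\cL_{b,c}^{-1}f$ to lie in $H^{\gamma+2-n}_{p,\theta}$.
\item The constant $c$ must be large enough that every $\cL_{b+2r_d,\,c-br_d-r_d(r_d-1)}$ with $r_d\le n$ is an isomorphism $H^{\gamma+2-n}_{p,\theta}\to H^{\gamma-n}_{p,\theta}$.
\end{itemize}
Your parenthetical about using $x_d\partial_{dd}$ pieces together with tangential regularity points in this direction. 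However, the argument as written rests on the incorrect identity and the unjustified membership $x_d\Delta u\in H^{\gamma-n}_{p,\theta}$.
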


\begin{proof}
We first prove $(ii)$ and then derive $(i)$ and $(iii)$ from $(ii)$.
    
    $(ii)$ 
    Note that for $r_d,r'\in\bN_0$,
    \begin{align} \label{eq5101318}
       D_d^{r_d}D_{x'}^{r'} \cL_{b,c}u &= \cL_{b+2r_d,c-br_d-r_d(r_d-1)}D_d^{r_d}D_{x'}^{r'}u + 2 1_{r_d\geq1} r_dx_d\Delta_{x'} D_d^{r_d-1}D_{x'}^{r'}u \nonumber
       \\
       &\quad + r_d(r_d-1)1_{r_d\geq2}\Delta_{x'} D_d^{r_d-2}D_{x'}^{r'}u,
    \end{align}
    where $\Delta_{x'} = D_{1}^2 + D_2^2 + \cdots + D_{d-1}^2$.
    By \eqref{equiv_der}, if $u \in H_{p,\theta}^{\gamma+2,(n)}$, then for any $b,c\in\bR$ and $r_d+r'\leq n$, $D_d^{r_d}D_{x'}^{r'} \cL_{b,c}u \in H_{p,\theta}^{\gamma-n}$, which implies that 
    $\cL_{b,c}$ is a bounded operator from $H_{p,\theta}^{\gamma+2,(n)}$ to $H_{p,\theta}^{\gamma,(n)}$.  Thus, we only need to prove that its inverse is bounded.
    By Lemma \ref{lem4282020}, there is $c=c(d,p,\theta,\gamma,b,n)>0$ such that for every $r_d\leq n$, $\cL_{b+2r_d,c-br_d-r_d(r_d-1)}$ is a bounded isomorphism from $H_{p,\theta}^{\gamma+2-n}$ onto $H_{p,\theta}^{\gamma-n}$.
    For $u\in H_{p,\theta}^{\gamma,(n)}$ and $r_d+r'\leq n$, by \eqref{eq5101318},
    \begin{align*}
        &\cL_{b+2r_d,c-br_d-r_d(r_d-1)}(D_d^{r_d}D_{x'}^{r'}\cL_{b,c}^{-1}u) 
        \\
        &= D_d^{r_d}D_{x'}^{r'}u -21_{r_d\geq1} r_d\sum_{i=1}^{d-1}x_dD_i\left(D_d^{r_d-1} \cL^{-1}_{b,c}(D_iD_{x'}^{r'}u)\right)
        \\
        &\quad-r_d(r_d-1)1_{r_d\geq2}D_d^{r_d-2}\cL_{b,c}^{-1}(\Delta_{x'}D_{x'}^{r'}u)
        \\
        &=:I_1+I_2+I_3.
    \end{align*}
    When $r_d=0$, the left-hand side of the equality becomes $\cL_{b,c}(D_{x'}^{r'}\cL_{b,c}^{-1})u = D_{x'}^{r'}u \in H_{p,\theta}^{\gamma-n}$, which easily leads to $D_{x'}^{r'}\cL_{b,c}^{-1}u \in H_{p,\theta}^{\gamma+2-n}$. Here, we used that $\cL_{b,c}^{-1}$ commutes with $D_{x'}$, which follows from the commutativity of $\cL_{b,c}$ and $D_{x'}$.
    Fix $n_0\leq n$ and assume that 
    \begin{equation} \label{eq5110121}
        D_d^{r_d}(\cL_{b,c}^{-1}D_{x'}^{r'}u) \in H_{p,\theta}^{\gamma+2-n}
    \end{equation}
    for $r_d\leq n_0-1$ and $r'\in\bN_0$ such that $r_d+r'\leq n$. It is obvious that $I_1 \in H_{p,\theta}^{\gamma-n}$.
    For $I_2$, using $u\in H_{p,\theta}^{\gamma,(n)}$, \eqref{equiv_der}, and \eqref{eq5110121} with $r'+1$ instead of $r'$, we get $I_2\in H_{p,\theta}^{\gamma+1-n}\subset H_{p,\theta}^{\gamma-n}$. Similarly, one can also prove that $I_3\in H_{p,\theta}^{\gamma-n}$.
    Since $\cL_{b+2r_d,c-br_d-r_d(r_d-1)}$ is a bounded isomorphism from $H_{p,\theta}^{\gamma+2-n}$ onto $H_{p,\theta}^{\gamma-n}$, we still have \eqref{eq5110121} when $r_d=n_0$. By mathematical induction in $r_d$, we see that $D_x^r\cL_{b,c}^{-1}u \in H_{p,\theta}^{\gamma+2-n}$ for $r\leq n$. The claim is obtained.

        $(i)$ The case $\gamma\geq n$ is proved in Lemma \ref{lem5071526}. Let $\gamma<n$ and find $m\in\bN$ so that $2m+\gamma\geq n$. Due to $(ii)$, we can choose $b,c\in\bR$ so that $\cL_{b,c}^{-m}u \in H_{p,\theta}^{2m+\gamma,(n)}$. Using the case $\gamma\geq n$, there is $v_l \in C_c^\infty(\overline{\bR^d_+})$ so that $v_l\to \cL_{b,c}^{-m}u$ in $H_{p,\theta}^{2m+\gamma,(n)}$, which is equivalent to $\cL_{b,c}^m v_l \to u$ in $H_{p,\theta}^{\gamma,(n)}$. Here, we remark that $\cL_{b,c}^m v_l \in C_c^\infty(\overline{\bR^d_+})$.

        $(iii)$
        Note that the relation $[H_{p,\theta}^{\gamma_1,(n)},H_{p,\theta}^{\gamma_2,(n)}]_\kappa \subset H_{p,\theta}^{\gamma,(n)}$ can be proved by using the fact that the first part of the proof of Lemma \ref{lem5090008} remains valid for arbitrary $\gamma_1$ and $\gamma_2$. 

        Let us consider the reverse inclusion $H_{p,\theta}^{\gamma,(n)} \subset [H_{p,\theta}^{\gamma_1,(n)},H_{p,\theta}^{\gamma_2,(n)}]_\kappa$.
        When both $\gamma_1,\gamma_2\geq n$, the desired result follows from Lemma \ref{lem5090008}. For general $\gamma_1$ and $\gamma_2$, take $m\in \bN$ so that $\gamma_1+2m,\gamma_2+2m\geq n$. Then by $(ii)$ and the complex interpolation of operators, one can choose $b,c\in\bR$ so that
        \begin{equation*}
            \cL_{b,c}^m:H_{p,\theta}^{\gamma+2m,(n)} \to [H_{p,\theta}^{\gamma_1,(n)},H_{p,\theta}^{\gamma_2,(n)}]_\kappa.
        \end{equation*}
        This actually implies the desired inclusion.
        The lemma is proved.
\end{proof}

\subsection{Comparison with Classical Weighted Sobolev Spaces}

In this subsection, we compare $H_{p,\theta}^{\gamma,(n)}$ with classical weighted spaces near the boundary $x_d=0$. We first introduce the classical weighted Besov and Triebel–Lizorkin spaces on the whole space $\bR^d$. Below we adopt the notation used in \eqref{eq4070111}.
For $p,q>1$, $\theta\in\bR$, and $\gamma\in\bR$, the Besov space $B_{p,q}^\gamma(\bR^d,|x_d|^{\theta-d})$ is the set of all tempered distributions $u\in \cS'(\bR^d)$ such that
\begin{equation*}
    \|u\|_{B_{p,q}^\gamma(\bR^d,|x_d|^{\theta-d})}:=\|S_0u\|_{L_p(|x_d|^{\theta-d}dx)} + \left( \sum_{j=1}^\infty 2^{j q \gamma} \|\Delta_j u\|_{L_p(|x_d|^{\theta-d}dx)}^q \right)^{1/q} <\infty,
\end{equation*}
where
\begin{equation*}
    \|v\|_{L_p(|x_d|^{\theta-d}dx)}^p := \int_{\bR^d} |v|^p |x_d|^{\theta-d} dx.
\end{equation*}
We also introduce the Triebel–Lizorkin space $F_{p,q}^\gamma(\bR^d,|x_d|^{\theta-d})$, which is the set of all $u\in \cS'(\bR^d)$ such that
\begin{equation*}
    \|u\|_{F_{p,q}^\gamma(\bR^d,|x_d|^{\theta-d})}:=\|S_0u\|_{L_p(|x_d|^{\theta-d}dx)} +  \left\|\left(\sum_{j=1}^\infty 2^{j q \gamma} |\Delta_j u|^q \right)^{1/q}\right\|_{L_p(|x_d|^{\theta-d}dx)}  <\infty.
\end{equation*}
It is known in \cite[Theorems 1.10 and 2.20]{R01} that when $n\in\bN_0$ and $\theta\in(d-1,d-1+p)$, $F_{p,2}^{n}(\bR^d,|x_d|^{\theta-d}) = W_{p}^{n}(\bR^d,|x_d|^{\theta-d})$, where
\begin{equation*}
    W_{p}^{n}(\bR^d,|x_d|^{\theta-d}):=\{u\in L_{p}(\bR^d,|x_d|^{\theta-d}dx): u,\cdots,D_x^nu \in L_{p}(\bR^d,|x_d|^{\theta-d}dx) \}.
\end{equation*}

We also review the definitions of the Besov and Triebel–Lizorkin spaces on domains. The space $B_{p,q}^{\gamma}(\bR^d_+,x_d^{\theta-d})$ is the collection of all restrictions of elements of $B_{p,q}^{\gamma}(\bR^d,|x_d|^{\theta-d})$ on $\bR^d_+$. The norm in this space is defined as
\begin{equation*}
    \|u\|_{B_{p,q}^{\gamma}(\bR^d_+,x_d^{\theta-d})} = \inf \|v\|_{B_{p,q}^{\gamma}(\bR^d,|x_d|^{\theta-d})},
\end{equation*}
where the infimum is taken over all $v\in B_{p,q}^{\gamma}(\bR^d,|x_d|^{\theta-d})$ whose restriction on $\bR^d_+$ is $u$. The space $F_{p,q}^{\gamma}(\bR^d_+,x_d^{\theta-d})$ can be defined in a similar way.

In the special case when $\gamma=n\in \bN_0$, we have the following.

\begin{lemma} \label{lem5141224}
    Let $p>1$, $n\in\bN_0$, and $\theta\in(d-1,d-1+p)$. Then we have $F_{p,2}^{n}(\bR^d_+,x_d^{\theta-d})=H_{p,\theta}^{n,(n)}$.
\end{lemma}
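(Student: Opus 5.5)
Since $H_{p,\theta}^{n,(n)}=W_p^n(\bR^d_+,x_d^{\theta-d})$ (as remarked after the definition of $H_{p,\theta}^{\gamma,(n)}$) and, by \cite[Theorems 1.10 and 2.20]{R01}, $F_{p,2}^n(\bR^d,|x_d|^{\theta-d})=W_p^n(\bR^d,|x_d|^{\theta-d})$ for $\theta\in(d-1,d-1+p)$, the lemma reduces to showing that the restriction map from $W_p^n(\bR^d,|x_d|^{\theta-d})$ to $W_p^n(\bR^d_+,x_d^{\theta-d})$ is onto with a bounded right inverse. Once this is established, the quotient norm defining $F_{p,2}^n(\bR^d_+,x_d^{\theta-d})$ is equivalent to $\|u\|_{H_{p,\theta}^{n,(n)}}=\sum_{k\le n}\|D_x^ku\|_{L_{p,\theta}}$. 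The case $n=0$ is trivial: extend by zero, since $L_p(|x_d|^{\theta-d}dx)$ needs no smoothness.

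\textbf{Easy inclusion.} Let $u\in F^n_{p,2}(\bR^d_+,x_d^{\theta-d})$ and let $v$ be any extension of $u$ to $\bR^d$. Then $v\in W_p^n(\bR^d,|x_d|^{\theta-d})$ with comparable norm, and restricting the derivatives $D^kv$ to $\bR^d_+$ gives
\[
\|u\|_{H^{n,(n)}_{p,\theta}}\le N\|v\|_{F^n_{p,2}}.
\]
Taking the infimum over all extensions $v$ yields $\|u\|_{H^{n,(n)}_{p,\theta}}\le N\|u\|_{F^n_{p,2}(\bR^d_+,x_d^{\theta-d})}$.

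\textbf{Reverse inclusion.} I would use a Hestenes-type reflection. By Lemma \ref{lem5061739} $(i)$ it suffices to treat $u\in C_c^\infty(\overline{\bR^d_+})$. Define
\[
Eu(x',x_d)=\sum_{j=1}^{n+1}\lambda_j\,u(x',-jx_d)\qquad\text{for }x_d<0,
\]
where the $\lambda_j$ solve $\sum_j\lambda_j(-j)^k=1$ for $k=0,\dots,n$ (a Vandermonde system). With this choice the normal derivatives $D_d^kEu$, $k\le n$, match across $x_d=0$, so $Eu\in C^n$ and its weak derivatives up to order $n$ are the piecewise classical ones, with no distributional part on $\{x_d=0\}$. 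Each derivative on $\{x_d<0\}$ is a finite combination of $(D^ku)(x',-jx_d)$. The change of variables $y_d=-jx_d$ maps $|x_d|^{\theta-d}$ to a constant multiple of $y_d^{\theta-d}$, hence
\[
\|Eu\|_{W^n_p(\bR^d,|x_d|^{\theta-d})}\le N\|u\|_{H^{n,(n)}_{p,\theta}}.
\]
By density, $E$ extends to a bounded extension operator, and combining with the identification of \cite{R01} gives $\|u\|_{F^n_{p,2}(\bR^d_+,x_d^{\theta-d})}\le N\|u\|_{H^{n,(n)}_{p,\theta}}$.

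\textbf{Main obstacle.} The delicate point is the passage between $W_p^n(\bR^d,|x_d|^{\theta-d})$ defined via weak derivatives and the smooth-function computations used for $E$. For the extension step, one must ensure that the extended function has no singular part across $x_d=0$; the $C^n$ matching above handles this for smooth $u$, and density (Lemma \ref{lem5061739} $(i)$, which uses $\theta>d-1$) handles general $u$. The remaining concern is that \cite{R01} requires the weight $|x_d|^{\theta-d}$ to be a Muckenhoupt $A_p$ weight, which is exactly the condition $-1<\theta-d<p-1$, i.e. $\theta\in(d-1,d-1+p)$. I would check this against the precise hypotheses of \cite[Theorems 1.10 and 2.20]{R01}.
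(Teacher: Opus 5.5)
Your proposal is correct and follows the paper's proof essentially step by step. For the easy inclusion you restrict an arbitrary extension and use the identification $F_{p,2}^{n}(\bR^d,|x_d|^{\theta-d})=W_p^n(\bR^d,|x_d|^{\theta-d})$; for the reverse inclusion you use density of $C_c^\infty(\overline{\bR^d_+})$ together with a Hestenes-type reflection. The only cosmetic difference is that you take $n+1$ reflection coefficients to get $C^n$ matching across $x_d=0$, whereas the paper uses $n$ coefficients, matching derivatives only up to order $n-1$, which already suffices for weak derivatives of order $n$.
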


\begin{proof}
Since the case $n=0$ is obvious, it suffices to handle the case $n\geq1$.
We first prove $F_{p,2}^{n}(\bR^d_+,x_d^{\theta-d}) \subset H_{p,\theta}^{n,(n)}$. Suppose that $u\in F_{p,2}^{n}(\bR^d_+,x_d^{\theta-d})$. Then there exists $v\in F_{p,2}^{n}(\bR^d,|x_d|^{\theta-d})$ such that $v=u$ on $\bR^d_+$. Due to the relation $F_{p,2}^{n}(\bR^d,|x_d|^{\theta-d}) = W_{p}^{n}(\bR^d,|x_d|^{\theta-d})$, one can easily show that $u\in H_{p,\theta}^{n,(n)}$.

It remains to show the reverse inclusion. To prove this, it suffices to show that for $u\in H_{p,\theta}^{n,(n)}$, there is $v\in F_{p,2}^{n}(\bR^d,|x_d|^{\theta-d})$ such that $v=u$ on $\bR^d_+$. Due to Lemma \ref{lem5071526}, we may assume that $u\in C_c^\infty(\overline{\bR^d_+})$. By \cite[Theorem 5.19]{AF03}, $u$ can be extended to the whole space without losing its weak differentiability up to any given order. More precisely, choose constants $c_1,\ldots,c_n$ such that
\begin{equation*}
\sum_{j=1}^{n} c_j(-j)^k=1,\qquad k=0,\ldots,n-1.
\end{equation*}
Define $v=u$ on $\bR^d_+$ and
\begin{equation*}
v(x',x_d) := \sum_{j=1}^{n}c_j u(x',-jx_d), \qquad x_d<0.
\end{equation*}
Then we have
\begin{equation*}
    \|v\|_{W_p^{n}(\bR^d,|x_d|^{\theta-d})} \leq N \|u\|_{W_{p}^{n}(\bR^d_+,x_d^{\theta-d})}.
\end{equation*}
Since $F_{p,2}^{n}(\bR^d,|x_d|^{\theta-d}) = W_{p}^{n}(\bR^d,|x_d|^{\theta-d})$, we get the desired result. The lemma is proved.
\end{proof}

For general $H_{p,\theta}^{\gamma,(n)}$ with $\gamma\in \bR$, we are concerned only with the behavior near the boundary. In other words, we study $I_Ru := u\eta_R$, where $\eta_R\in C^\infty(\bR^d_+)$ is a one-dimensional cutoff function such that $\eta_R=1$ on $\{0<x_d<R\}$ and $\eta_R=0$ on $\{x_d>2R\}$.

\begin{lemma}
    Let $R>0$, $p\in(1,\infty)$, $n\in\bN_0$, and $\theta\in (d-1,d-1+p)$.

    (i) If $\gamma\geq n$, then
    \begin{equation*}
        \|I_Ru\|_{H_{p,\theta}^{\gamma,(n)}} \leq N \|u\|_{F_{p,2}^{\gamma}(\bR^d_+,x_d^{\theta-d})},
    \end{equation*}
    where $N=N(R)$.

    (ii) If $\gamma\leq n$, then
        \begin{equation} \label{eq5121127}
        \|I_Ru\|_{F_{p,2}^{\gamma}(\bR^d_+,x_d^{\theta-d})} \leq N \|u\|_{H_{p,\theta}^{\gamma,(n)}},
    \end{equation}
    where $N=N(R)$.
\end{lemma}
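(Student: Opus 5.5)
The plan is to reduce both parts to the integer case, Lemma \ref{lem5141224} ($F_{p,2}^{n}(\bR^d_+,x_d^{\theta-d})=H_{p,\theta}^{n,(n)}$), and then handle non-integer $\gamma$ by complex interpolation on both sides. On the $H$ side we use Lemma \ref{lem4161837}$(iii)$ with $n$ fixed. On the $F$ side we use the weighted Triebel–Lizorkin interpolation $[F_{p,2}^{\gamma_1}(\bR^d_+,x_d^{\theta-d}),F_{p,2}^{\gamma_2}(\bR^d_+,x_d^{\theta-d})]_\kappa=F_{p,2}^{\gamma}(\bR^d_+,x_d^{\theta-d})$, valid since $x_d^{\theta-d}$ is an $A_p$ weight for $\theta\in(d-1,d-1+p)$ (Rychkov's universal extension operator transfers this to the half space). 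The cutoff $I_R$ localizes everything to a strip $0<x_d<2R$, where $x_d$ is bounded above. Consequently the weights $x_d^m$ in $H_{p,\theta}^{\gamma-n}$ are harmless, and the lower-order pieces $e^{m\theta}$ with $m$ large vanish.

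\textbf{Part (i).} For integers $k\geq n$, first show
$$\|I_Ru\|_{H_{p,\theta}^{k,(n)}}\leq N\|u\|_{W_p^{k}(\bR^d_+,x_d^{\theta-d})}.$$
Since $D_x^r(I_Ru)$, $r\le n$, must lie in $H_{p,\theta}^{k-n}$, it suffices to control $x_d^jD_x^{j+r}(\eta_Ru)$ in $L_{p,\theta}$ for $j\leq k-n$. The factor $x_d^j\le (2R)^j$ on the support, and the derivatives of $\eta_R$ are bounded, so this holds. By Lemma \ref{lem5141224} (or \cite{R01}, $F_{p,2}^k=W_p^k$ on the half space via extension), this gives the bound with $F_{p,2}^k$. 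For general $\gamma\geq n$, pick integers $n\le k_1<\gamma<k_2$ and interpolate the linear operator $I_R$ between $F_{p,2}^{k_i}\to H_{p,\theta}^{k_i,(n)}$, using Lemma \ref{lem4161837}$(iii)$ for the target.

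\textbf{Part (ii).} For integers $k\le n$, we need $\|I_Ru\|_{F_{p,2}^{k}}\le N\|u\|_{H_{p,\theta}^{k,(n)}}$. For $k\ge0$, $H_{p,\theta}^{k,(n)}$ controls $D_x^ru\in H_{p,\theta}^{k-n}$ for $r\le n$. The task is to show that $\eta_R u,\dots,D_x^k(\eta_Ru)$ lie in $L_{p,\theta}$. Use \eqref{eq5062157}/Hardy-type iteration on the strip: the weights $x_d^{-(n-k)}$ that appear can be absorbed because $D_x^{r}u\in H^{k-n}_{p,\theta}$ for all $r\le n$ implies, by \eqref{equiv_der} read backwards, $D_x^{r}u\in H^{k-n+j}_{p,\theta+jp}$-type control in $L_{p,\theta}$ after localization. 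More directly, $H^{k,(n)}_{p,\theta}\subset H^{k,(k)}_{p,\theta}$ locally near the boundary, since lower regularity of higher derivatives combined with the full family of derivatives upgrades near $x_d\le 2R$.

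I expect this step to be the main obstacle, and I would try the following first. Write $D_x^ru$ for $r\le k$ as $(1-\Delta)$-type combinations of $D_x^{r'}u$, $r'\le n$, in the $H_p$ scale locally on each dyadic piece $u(e^m\cdot)\zeta$, with $m\le \log 2R$. On each piece, $\sum_{r\le n}\|D^r(u(e^m\cdot)\zeta)\|_{H_p^{k-n}}$ dominates $\|u(e^m\cdot)\zeta\|_{H_p^{k}}$ up to factors $e^{mr}$. For $m$ bounded above, these factors are $\le N(R)$ or help. Summing with $e^{m\theta}$ then yields $I_Ru\in H^{k,(k)}_{p,\theta}=F^k_{p,2}$ by Lemma \ref{lem5141224}.

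For negative integers $k$ and non-integer $\gamma\le n$, interpolate as in (i), or lift by $\cL_{b,c}^{m}$ from Lemma \ref{lem4161837}$(ii)$. The commutator between $\cL_{b,c}$ and $\eta_R$ involves only derivatives of $\eta_R$, which are supported away from the boundary, where both spaces coincide with unweighted $H_p^\gamma$. The negative-order case would be handled by duality (Lemma \ref{lem4161836}$(vi)$ together with the $F$-space duality for $A_p$ weights) if lifting proves awkward.
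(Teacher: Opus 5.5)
Part (i) is the paper's argument. Part (ii) follows a different route, and as written it has two problems. The first is in the dyadic computation you plan to try first. Bounding $\|u(e^m\cdot)\zeta\|_{H^k_p}$ and summing against $e^{m\theta}$ produces $\|I_Ru\|_{H^{k}_{p,\theta}}$. That is the degenerate norm, which only controls $x_d^rD_x^r(I_Ru)$ in $L_{p,\theta}$, so it does not give $I_Ru\in H^{k,(k)}_{p,\theta}$. For that you need $\sum_m e^{m\theta}\|(D^r_xu)(e^m\cdot)\zeta\|^p_{L_p}$ for $r\le k$. In other words, the pieces $D^r_x(u(e^m\cdot))\zeta$ carry the factor $e^{-mr}$, which is unbounded as $m\to-\infty$. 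The repair is to apply the lifting $\|w\|_{L_p}\le N\sum_{i\le n-k}\|D^i_xw\|_{H^{k-n}_p}$ to $w=(D^r_xu)(e^m\cdot)\zeta$ for each $r\le k$. The factors then become $e^{m(j-r)}$ with $r\le j\le n$, and these are bounded for $m\le \log(2R)+O(1)$. Equivalently, iterate \eqref{equiv_der} using $\|x_dD_xw\|_{H^{s}_{p,\theta}}\approx\|D_xw\|_{H^{s}_{p,\theta+p}}\le N(R)\|D_xw\|_{H^{s}_{p,\theta}}$ for $w$ supported in $\{x_d\le 4R\}$. This is your ``more direct'' local embedding, and it is correct. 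In fact the same iteration gives $\|I_Ru\|_{H^{\gamma}_{p,\theta}}\le N(R)\|u\|_{H^{\gamma,(n)}_{p,\theta}}$ for every real $\gamma$. So, writing $I_R=I_RI_{2R}$, the range $\gamma\le 0$ of (ii) reduces to the case $n=0$.

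The second problem is the real gap. The case $n=0$, $\gamma<0$, which is the whole content of (ii) when $n=0$, is left as a menu of options. Interpolation cannot reach it from $\gamma\ge0$. Consider the $\mathcal{L}_{b,c}$-lifting $I_Ru=\mathcal{L}_{b,c}^m(I_Rv)+[\eta_R,\mathcal{L}_{b,c}^m]v$ with $v=\mathcal{L}_{b,c}^{-m}u$. When $n=0$ it has only the base point $\gamma+2m=0$, so it gives $\gamma\in-2\bN$, with the rest by interpolation. More importantly, it rests on an unproved claim. The commutator term is a distribution of negative smoothness $1-2m$ supported in $\{R\le x_d\le 2R\}$, and you need it to lie in $F^{1-2m}_{p,2}(\bR^d_+,x_d^{\theta-d})$ with norm controlled by its unweighted $H^{1-2m}_p$ norm. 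On the $H_{p,\theta}$ side this is the localization used in the proof of Lemma \ref{lem5071526}. For weighted $F$-spaces of negative order it is not automatic, because the Littlewood--Paley pieces of such a distribution are not supported in the strip and do see the weight near $x_d=0$. You assert it without proof.

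The paper settles this case by duality, which is also the natural fix for you. From $\theta/p+\theta'/p'=d$ one gets $\theta'\in(d-1,d-1+p')$, so (i) applied to $(p',\theta',-\gamma)$ gives $I_R\colon F^{-\gamma}_{p',2}(\bR^d_+,x_d^{\theta'-d})\to H^{-\gamma}_{p',\theta'}$. Then $\phi\mapsto u(I_R\mathcal{R}\phi)$, with $\mathcal{R}$ the restriction, is a bounded functional on $F^{-\gamma}_{p',2}(\bR^d,|x_d|^{\theta'-d})$. Hence it is an element of $F^{\gamma}_{p,2}(\bR^d,|x_d|^{\theta-d})$ whose restriction to $\bR^d_+$ is $I_Ru$.

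For $n\ge1$ the paper uses the lifting property $\|I_Ru\|_{F^{\gamma}_{p,2}}\le N\sum_{r\le n}\|D^r_x(I_Ru)\|_{F^{\gamma-n}_{p,2}}$ of weighted $F$-spaces together with the $n=0$ case at order $\gamma-n\le 0$. This covers all $\gamma\le n$ at once, so the paper needs neither Lemma \ref{lem5141224} nor a separate nonnegative-integer step in (ii). Your repaired route replaces that $F$-space lifting with the elementary local embedding above plus interpolation on $[0,n]$.
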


\begin{proof}
    $(i)$ When $\gamma\in \bN_0$ such that $\gamma\geq n$,
    \begin{align*}
        \|I_R u\|_{H_{p,\theta}^{\gamma-n}} \leq N \sum_{r=0}^{\gamma-n} \|x_d^{r} D_x^r(I_Ru)\|_{L_{p,\theta}} \leq N(R) \sum_{r=0}^{\gamma-n} \|D_x^r(I_Ru)\|_{L_{p,\theta}}.
    \end{align*}
    Thus, one can deduce that
    \begin{equation*}
        \|I_R u\|_{H_{p,\theta}^{\gamma,(n)}} \leq N \|I_R u\|_{W_{p}^{\gamma}(\bR^d_+,x_d^{\theta-d})} \leq N \|u\|_{W_{p}^{\gamma}(\bR^d_+,x_d^{\theta-d})}= N\|u\|_{F_{p,2}^{\gamma}(\bR^d_+,x_d^{\theta-d})}.
    \end{equation*}
    This implies that $I_R:F_{p,2}^{\gamma}(\bR^d_+,x_d^{\theta-d})\to H_{p,\theta}^{\gamma,(n)}$ is a bounded linear operator if $\gamma\in\bN_0$ and $\gamma\geq n$. 
    By \cite[Proposition 5.6]{LMV18},
\begin{align} \label{eq5132348}
F_{p,2}^{\gamma}(\bR^d_+,x_d^{\theta-d}) = [F_{p,2}^{\gamma_1}(\bR^d_+,x_d^{\theta-d}),F_{p,2}^{\gamma_2}(\bR^d_+,x_d^{\theta-d})]_\kappa,
\end{align}
where $\kappa\in(0,1)$, $\gamma_i\in\bR$, $i=1,2$, and $\gamma=\kappa\gamma_1 + (1-\kappa)\gamma_2$.
  Thus, the general case $\gamma\geq n$ follows from \eqref{eq5112341}, \eqref{eq5132348}, and the complex interpolation of operators.

    $(ii)$ First, we assume that $n=0$. In this case, $H_{p,\theta}^{\gamma,(n)} = H_{p,\theta}^{\gamma}$, and by Lemma \ref{lem4161836} $(vi)$, the dual space of $H_{p,\theta}^{\gamma}$ is $H_{p',\theta'}^{-\gamma}$, where $1/p+1/p'=1$ and $\theta/p+\theta'/p'=d$.
Since $-\gamma\geq n=0$, $I_R:F_{p',2}^{-\gamma}(\bR^d_+,x_d^{\theta'-d}) \to H_{p',\theta'}^{-\gamma}$ is bounded. This implies that its dual operator $I_R^*$ is bounded from $H_{p,\theta}^{\gamma}$ to $(F_{p',2}^{-\gamma}(\bR^d_+,x_d^{\theta'-d}))^*$, which is the dual space of $F_{p',2}^{-\gamma}(\bR^d_+,x_d^{\theta'-d})$.
Here, one can deduce that $I_R^*=I_R$.

Let $u\in H_{p,\theta}^{\gamma}$ and let $\cR:F_{p',2}^{-\gamma}(\bR^d,|x_d|^{\theta'-d}) \to F_{p',2}^{-\gamma}(\bR^d_+,x_d^{\theta'-d})$ be the standard restriction operator. 
Define a functional $\tilde{I}_{R,u}$ on $F_{p',2}^{-\gamma}(\bR^d,|x_d|^{\theta'-d})$ by
    \begin{equation*}
        \tilde{I}_{R,u}(v) := (I_R(u))(\cR v) = u(I_R\cR v), \quad v\in F_{p',2}^{-\gamma}(\bR^d,|x_d|^{\theta'-d}).
    \end{equation*}
    Actually,
    \begin{equation*}
        |\tilde{I}_{R,u}(v)| \leq N(R) \|u\|_{H_{p,\theta}^{\gamma}} \|\cR v\|_{F_{p',2}^{-\gamma}(\bR^d_+,x_d^{\theta'-d})} \leq N(R) \|u\|_{H_{p,\theta}^{\gamma}} \|v\|_{F_{p',2}^{-\gamma}(\bR^d,|x_d|^{\theta'-d})},
    \end{equation*}
    which means that $\tilde{I}_{R,u}$ is in $F_{p,2}^{\gamma}(\bR^d,|x_d|^{\theta-d})$, the dual space of $F_{p',2}^{-\gamma}(\bR^d,|x_d|^{\theta'-d})$ (see \cite[Proposition 3.5]{MV15}).
    Moreover, if we further assume that $v\in C_c^\infty(\bR^d_+)$, then $\cR v = v$, which yields $\tilde{I}_{R,u}=I_R(u)$ in $\cD'(\bR^d_+)$. Hence, $I_R$ is a bounded operator from $H_{p,\theta}^{\gamma}$ to $F_{p,2}^{\gamma}(\bR^d_+,x_d^{\theta-d})$. The case $n=0$ is proved.

    Next, we consider general $n\geq 1$.
    By investigating the proof of \cite[Proposition 5.6]{LMV18}, there is an extension $v_R \in F_{p,2}^{\gamma-n}(\bR^d,|x_d|^{\theta-d})$ of $I_Ru$ such that, for $r=0,1,\dots,n$,
\begin{equation*}
    \|D_x^r(v_R)\|_{F_{p,2}^{\gamma-n}(\bR^d,|x_d|^{\theta-d})} \leq N \|D_x^r(I_Ru)\|_{F_{p,2}^{\gamma-n}(\bR^d_+,x_d^{\theta-d})}.
\end{equation*}
    Thus, by \cite[Propositions 2.4 and 3.10]{MV12},
    \begin{align*}
        \|I_Ru\|_{F_{p,2}^{\gamma}(\bR^d_+,x_d^{\theta-d})} &\leq N \sum_{r=0}^{n} \|D_x^r(I_Ru)\|_{F_{p,2}^{\gamma-n}(\bR^d_+,x_d^{\theta-d})}
        \\
        &\leq N(R) \sum_{r=0}^{n} \|I_{2R}D_x^ru\|_{F_{p,2}^{\gamma-n}(\bR^d_+,x_d^{\theta-d})}.
    \end{align*}
    Thus, by the case $n=0$, 
\begin{align*}
    \|I_Ru\|_{F_{p,2}^{\gamma}(\bR^d_+,x_d^{\theta-d})} \leq N(R) \sum_{r=0}^{n} \|I_{2R}D_x^ru\|_{H_{p,\theta}^{\gamma-n}} \leq N(R) \|u\|_{H_{p,\theta}^{\gamma,(n)}}.
\end{align*}
The lemma is proved.
\end{proof}

\subsection{Stochastic Banach spaces}

In this subsection, we introduce some basic but essential properties of our solution space $\frH_{neu}^{\gamma+2}(p,\theta,\tau)$

\begin{definition}
    Let $\tau$ be a stopping time.
    
    $(i)$ 
    We denote by $\bH_0^\infty(\tau)$ the set of all functions of the type
\begin{equation} \label{eq4221514}
    f(t,x)=\sum_{i=1}^{n}1_{(\tau_{i-1},\tau_{i}]}(t) f_{i}(x)
\end{equation}
where $f_{i}\in C_c^\infty(\bR^d_+)$ and $0\leq \tau_{0} \leq \cdots \leq \tau_{n} \leq \tau$ are bounded stopping times.

$(ii)$ 
We define
 $\bH_0^\infty(\tau,l_2)$ to be the set of all $l_2$-valued functions $g=(g^1,g^2,\cdots)$ such that $g^k=0$ for all sufficiently large $k$, and each component $g^k$ is in $\bH_0^\infty(\tau)$.

 $(iii)$
We define $\overline{\bH}_0^{\infty}(\tau)$ as the set defined by replacing $f_{i}\in C_c^\infty(\bR^d_+)$ in \eqref{eq4221514} with $f_{i}\in C_c^\infty(\overline{\bR^d_+})$.
Moreover, $\overline{\bH}_0^{\infty}(\tau,l_2)$ is defined analogously.
\end{definition}

\begin{lemma}
    Let $p\in(1,\infty)$, $n\in\bN_0$, $\gamma,\theta\in\bR$, and $\tau$ be a stopping time.

    (i) The sets $\bH_0^\infty(\tau)$ and $\bH_0^\infty(\tau,l_2)$ are dense in $\bH_{p,\theta}^{\gamma}(\tau)$ and $\bH_{p,\theta}^{\gamma}(\tau,l_2)$, respectively.

    (ii) When $\theta\in (d-1,d-1+p)$, the sets $\overline{\bH}_0^{\infty}(\tau)$ and $\overline{\bH}_0^{\infty}(\tau,l_2)$ are dense in ${\bH}_{p,\theta}^{\gamma,(n)}(\tau)$ and ${\bH}_{p,\theta}^{\gamma,(n)}(\tau,l_2)$, respectively.
\end{lemma}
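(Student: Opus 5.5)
We prove both parts by the standard two-stage approximation for stochastic Banach spaces $L_p((0,\tau],\cP;X)$. First we approximate by predictable step processes with values in $X$. Then we approximate the values by a dense subset of $X$. Here $X$ is $H_{p,\theta}^\gamma$, $H^\gamma_{p,\theta}(l_2)$, $H_{p,\theta}^{\gamma,(n)}$ or $H_{p,\theta}^{\gamma,(n)}(l_2)$; each is a separable Banach space, being a closed subspace of a weighted $\ell_p$-sum of $H_p^\gamma$'s.

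For the first stage, recall that for a separable Banach space $X$, simple predictable processes are dense in $L_p((0,\tau],\cP;X)$. These are processes of the form
\begin{equation*}
\sum_{i}1_{A_i\times(s_i,t_i]}(\omega,t)\,x_i,\qquad A_i\in\cF_{s_i},\ x_i\in X.
\end{equation*}
This follows from the fact that $\cP$ is generated by the algebra of finite unions of predictable rectangles $A\times(s,t]$, together with a monotone class argument. We apply it after viewing $(0,\tau]$ as a predictable subset of $\Omega\times(0,T]$; here we may assume $\tau\le T$ bounded, or else truncate $\tau\wedge T$ and use monotone convergence. Each rectangle term $1_{A\times(s,t]}1_{(0,\tau]}$ can be rewritten as $1_{(\tau_{i-1},\tau_i]}$ with bounded stopping times $\tau_{i-1}=(s1_A+t1_{A^c})\wedge\tau$ and $\tau_i=t\wedge\tau$. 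Indeed, $s1_A+t1_{A^c}$ is a stopping time because $A\in\cF_s$. After refining the partition, the result can be written in the ordered form \eqref{eq4221514}, with values $x_i\in X$.

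For the second stage, we replace each $x_i$ by $f_i$ in the appropriate dense class, with $\|x_i-f_i\|_X$ small. In case (i) we take $f_i\in C_c^\infty(\bR^d_+)$, which is dense by Lemma \ref{lem4161836} (i); in the $l_2$ case we use Lemma \ref{lem4161836} (vii), which also provides finitely many nonzero components. In case (ii) we take $f_i\in C_c^\infty(\overline{\bR^d_+})$, dense by Lemma \ref{lem4161837} (i) under $\theta\in(d-1,d-1+p)$; that lemma also covers the $l_2$-valued version. Since the step process has finitely many terms with bounded time intervals, the error in $L_p((0,\tau];X)$ is at most $\sum_i T^{1/p}\|x_i-f_i\|_X$, which can be made arbitrarily small.

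The main obstacle is the first stage: we must check that the generated sets are genuinely of the form $(\tau_{i-1},\tau_i]$ with ordered stopping times, since a sum of indicators over overlapping rectangles has to be reorganized. The cleanest route is to take a common dyadic time grid $t_j=jT/2^N$ and approximate on each slab $(t_{j-1},t_j]$ by $\cF_{t_{j-1}}$-measurable simple $X$-valued random variables. Such a term equals $\sum_l 1_{A_l}x_l$ with disjoint $A_l\in\cF_{t_{j-1}}$, and each piece $1_{A_l}1_{(t_{j-1},t_j]}$ becomes one interval of stopping times as above. This requires the standard martingale/averaging density of such approximations, for example by conditional averaging on slabs, which converges in $L_p$ by a Lebesgue-differentiation-type argument for Bochner spaces.
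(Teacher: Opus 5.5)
Your proposal is correct and follows essentially the paper's route. The paper cites the tensor-density lemma \cite[Lemma 1.2.19]{HNVW16}, which says that a dense scalar class tensored with a dense subspace of $X$ is dense in $L_p(S;X)$; the dense subspaces come from Lemma \ref{lem4161836}~$(i)$, $(vii)$ and Lemma \ref{lem4161837}~$(i)$, and part $(i)$ is also referred to \cite[Theorem 3.10]{Kry99}. Your two-stage argument (predictable step processes via the generating semiring of $\cP$, then replacing the values $x_i$) is that lemma written out by hand, and your stopping-time rewriting with $(s1_A+t1_{A^c})\wedge\tau$ supplies the scalar stochastic-interval density that the paper leaves implicit.
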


\begin{proof}
    The results follow from \cite[Lemma 1.2.19]{HNVW16}, Lemma \ref{lem4161836} $(i)$, and Lemma \ref{lem4161837} $(i)$. We also remark that $(i)$ was already proved in \cite[Theorem 3.10]{Kry99}.
\end{proof}

\begin{proposition}
Let $p\in[2,\infty)$, $\gamma,\theta\in\bR$, $T\in(0,\infty)$, and $\tau\leq T$ be a bounded stopping time.
    For any $u\in \frH_{neu}^{\gamma+2}(p,\theta,\tau)$, we have $u\in L_p(C([0,\tau];H_{p,\theta}^\gamma))$ and
    \begin{equation} \label{eq01072253}
        \left(\bE \sup_{t\leq \tau} \|u(t,\cdot)\|_{H_{p,\theta}^{\gamma}}^p\right)^{1/p} \leq N \|u\|_{\frH_{neu}^{\gamma+2}(p,\theta,\tau)},
    \end{equation}
    where $N=N(d,p,\theta,\gamma,T)$.
\end{proposition}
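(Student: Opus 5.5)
We will reduce the estimate to the analogous result in Krylov's framework for the spaces $H_{p,\theta}^{\gamma}$, by showing that the $\frH_{neu}$-norm dominates the norm of the classical stochastic Banach space used in \cite{Kry99,Kry_halfspace}. The key observation is that $u\in\widetilde{\bH}_{p,\theta}^{\gamma+2,(2)}(\tau)$ implies $u\in\bH_{p,\theta}^{\gamma}(\tau)$, $\bD u\in\bH_{p,\theta}^{\gamma}(\tau)$ and $\bS u\in\bH_{p,\theta}^{\gamma}(\tau,l_2)$. Moreover $u_0$ lies, via the real interpolation defining $U_{p,\theta}^{\gamma+2}$, in $L_p(\Omega,\cF_0;H_{p,\theta}^{\gamma})$. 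To see this, use $\widetilde{H}_{p,\theta}^{\gamma+2,(2)}\hookrightarrow H_{p,\theta}^{\gamma}$ (trivially, since the $(2)$-norm contains $\|u\|_{H^{\gamma}_{p,\theta}}$), so $(\widetilde{H}^{\gamma+2,(2)}_{p,\theta},H^\gamma_{p,\theta})_{1/p,p}\hookrightarrow H^\gamma_{p,\theta}$ with a universal constant.

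With this reduction, the plan is to prove directly an estimate of the form
\[
\bE\sup_{t\le\tau}\|u(t)\|^p_{H^{\gamma}_{p,\theta}}\le N\Big(\bE\|u_0\|^p_{H^\gamma_{p,\theta}}+\|u\|^p_{\bH^{\gamma}_{p,\theta}(\tau)}+\|\bD u\|^p_{\bH^{\gamma}_{p,\theta}(\tau)}+\|\bS u\|^p_{\bH^{\gamma}_{p,\theta}(\tau,l_2)}\Big),
\]
which also appears to follow from the definition of $H^{\gamma}_{p,\theta}$ via \eqref{defH}. For $\gamma$ we first lift to $\gamma=0$: for each $m$, $v_m:=(1-\Delta)^{\gamma/2}(u(e^m\cdot)\zeta)$ satisfies on $\bR^d$
\[
dv_m=(1-\Delta)^{\gamma/2}\big((\bD u)(e^m\cdot)\zeta\big)dt+(1-\Delta)^{\gamma/2}\big((\bS u)(e^m\cdot)\zeta\big)dw^k_t
\]
in the sense of distributions. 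We then apply Itô's formula for $\|v\|^p_{L_p}$ (Krylov's Itô formula for $L_p$-valued processes, valid for $p\ge2$), which needs $v\in L_p$, $dt$-part in $L_p$, and $dw$-part in $L_p(l_2)$. This gives
\[
\bE\sup_{t\le\tau}\|v_m(t)\|^p_{L_p}\le N\bE\|v_m(0)\|_{L_p}^p+N\bE\int_0^\tau\big(\|v_m\|^p_{L_p}+\|f_m\|^p_{L_p}+\|g_m\|^p_{L_p(l_2)}\big)dt,
\]
using Burkholder–Davis–Gundy and Young's inequality, with $N$ depending on $p,T$. Multiplying by $e^{m\theta}$ and summing over $m$ gives the desired bound, since $\sup\sum\le\sum\sup$.

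Continuity is handled by approximation. We approximate by the dense class of Lemma on $\bH^\infty_0$-type data: mollify $v_m$ in $x$ so that everything is smooth, apply the above estimate to differences, and conclude that $u$ is a limit in $L_p(\Omega;C([0,\tau];H^\gamma_{p,\theta}))$ of continuous processes. Alternatively, we can simply cite \cite[Theorem 2.7]{Kry_halfspace}/\cite{Kry99}, where this exact estimate is proved for the space with $u,\bD u$ in $H^{\gamma}_{p,\theta}$-type scales.

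The main obstacle is the rigorous application of Itô's formula when $\gamma$ is arbitrary and the data are only distributions. The lifting by $(1-\Delta)^{\gamma/2}$ on each dyadic piece removes this, but we must check that the weights $e^{m\theta}$ and the localization commute with the equation. They do, because multiplication by $\zeta$ and dilation act only on $x$. We must also check that the sum over $m$ is justified by Fatou/monotone convergence.
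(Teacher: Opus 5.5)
Your argument is correct and is essentially the paper's proof: you localize to the dyadic pieces $u(t,e^m\cdot)\zeta$, apply the whole-space sup-estimate of Krylov's Theorem 3.7 to each piece (which you re-derive via lifting by $(1-\Delta)^{\gamma/2}$ and It\^o's formula for the $L_p$-norm), then multiply by $e^{m\theta}$ and sum using $\sup_t\sum_m\le\sum_m\sup_t$. The extra term $\|u\|_{\bH^{\gamma}_{p,\theta}(\tau)}$ on your right-hand side is harmless, since the $\frH_{neu}^{\gamma+2}(p,\theta,\tau)$-norm controls it; however, your fallback of citing Krylov's weighted Dirichlet result would not apply verbatim, because those spaces require $x_d^{-1}u$ in a weighted space (i.e.\ a vanishing boundary value), so keep the localization argument.
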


\begin{proof}
Let $du=f dt + g^kdw_t^k$ with the initial data $u_0$. Then for $\zeta\in C_c^\infty(\bR_+)$,
    \begin{equation*}
        d(u(t,e^nx)\zeta(x)) = f(t,e^nx)\zeta(x) dt + g^k(t,e^nx) \zeta(x) dw_t^k, \quad t>0, x\in \bR^d,
    \end{equation*}
    with the initial data $u(0,e^nx)\zeta(x)=u_0(e^nx)\zeta(x)$. 
Since $u_0\in U_{p,\theta}^{\gamma+2}\subset \fH_{p,\theta}^{\gamma}$,
we have
\begin{equation*}
    u_0(e^n\cdot)\zeta \in L_p(\Omega,\cF_0;H_{p}^{\gamma}).
\end{equation*}
 Moreover, due to $f\in \bH_{p,\theta}^\gamma(\tau)$ and $g\in {\bH}_{p,\theta}^{\gamma+1,(1)}(\tau,l_2)$, we have
    \begin{eqnarray*}
        f(\cdot,e^n\cdot)\zeta \in L_p((0,\tau],\cP;H_{p}^\gamma), \quad g(\cdot,e^n\cdot)\zeta \in L_p((0,\tau],\cP;H_{p}^{\gamma}(l_2)).
    \end{eqnarray*}
    Thus, by repeating the proof of \cite[Theorem 3.7]{Kry99}, one can obtain
    \begin{align*}
        \bE\sup_{t\leq \tau} \|u(t,e^n\cdot)\zeta \|_{H_{p}^\gamma}^p &\leq N \|u_0(e^n\cdot)\zeta\|_{L_p(\Omega,\cF_0;H_{p}^{\gamma})}^p + N\|f(\cdot,e^n\cdot)\zeta \|_{L_p((0,\tau],\cP;H_p^\gamma(\bR^{d}))}^p 
        \\
        &\quad+ N \|g(\cdot,e^n\cdot)\zeta \|_{L_p((0,\tau],\cP;H_p^\gamma(\bR^{d},l_2))}^p.
    \end{align*}
Due to \eqref{defH}, we get
    \begin{equation*}
        \bE\sup_{t\leq \tau} \|u(t,\cdot)\|_{H_{p,\theta}^\gamma}^p \leq \sum_{n\in\bZ} e^{n\theta}\bE\sup_{t\leq \tau} \|u(t,e^n\cdot)\zeta \|_{H_{p}^\gamma}^p,
    \end{equation*}
     which yields that
    \begin{equation} \label{eq2152119}
        \left( \bE \sup_{t\leq \tau} \|u(t,\cdot)\|_{H_{p,\theta}^{\gamma}}^p \right)^{1/p} \leq N\left(\|u_0\|_{\fH_{p,\theta}^{\gamma}} + \|f\|_{\bH_{p,\theta}^\gamma(\tau)} + \|g\|_{\bH_{p,\theta}^\gamma(\tau,l_2)} \right).
    \end{equation}
    It remains to use \eqref{def_norm}.
    The proposition is proved.
\end{proof}

\begin{lemma}
    Let $p\in[2,\infty)$, $\gamma,\theta\in\bR$, $T\in(0,\infty)$, and $\tau\leq T$ be a bounded stopping time. Then $\frH_{neu}^{\gamma+2}(p,\theta,\tau)$ is a Banach space.
\end{lemma}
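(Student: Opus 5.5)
We will show completeness directly: take a Cauchy sequence in $\frH_{neu}^{\gamma+2}(p,\theta,\tau)$, identify candidate limits in each component space, and verify that these limits still satisfy \eqref{sense} in the sense of distributions. The norm \eqref{def_norm} is a sum of four norms, so it is a seminorm on the triples $(u,u_0,\bD u,\bS u)$. We first check that the data $(u_0,\bD u,\bS u)$ are uniquely determined by $u$, so that \eqref{def_norm} is a genuine norm on $u$. Indeed, if $u=0$ and $u$ satisfies \eqref{sense} with $(u_0,f,g)$, then for every $\phi\in C_c^\infty(\bR^d_+)$ the continuous semimartingale $0=(u_0,\phi)+\int_0^t(f,\phi)ds+\int_0^t(g^k,\phi)dw^k_s$ vanishes. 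Uniqueness of the semimartingale decomposition gives $(u_0,\phi)=0$, $(f,\phi)=0$ a.e., and $(g^k,\phi)=0$ a.e. for each $k$. By separability of $C_c^\infty$ (a countable dense set) we conclude $u_0=0$, $f=0$, $g=0$.

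Let $u_n$ be Cauchy, with data $(u_{0,n},f_n,g_n)$. Each of $\widetilde{\bH}_{p,\theta}^{\gamma+2,(2)}(\tau)$, $U_{p,\theta}^{\gamma+2}$, $\bH_{p,\theta}^{\gamma}(\tau)$, and $\bH_{p,\theta}^{\gamma+1,(1)}(\tau,l_2)$ is a Banach space. For the first, this follows from the earlier lemma that $H_{p,\theta}^{\gamma,(n)}$ is Banach, together with the fact that $x_d^{-1}D_du_n$ converges in $H_{p,\theta}^{\gamma+1}$ to a limit that must equal $x_d^{-1}D_du$ by distributional convergence, the same argument as in that lemma. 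The space $U_{p,\theta}^{\gamma+2}$ is a real interpolation space of Banach spaces, and the others are Bochner $L_p$ spaces. Hence there are limits $u,u_0,f,g$ in the respective spaces.

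It remains to pass to the limit in
$$(u_n(t,\cdot),\phi)=(u_{0,n},\phi)+\int_0^t(f_n,\phi)ds+\int_0^t(g_n^k,\phi)dw^k_s.$$
By Lemma \ref{lem4161836} $(vi)$, $|(v,\phi)|\le N(\phi)\|v\|_{H_{p,\theta}^{\gamma}}$. Therefore $\bE\sup_{t\le\tau}|\int_0^t(f_n-f,\phi)ds|^p\to0$ by H\"older's inequality, since $\tau\le T$. The stochastic integrals converge in the same sense by the Burkholder--Davis--Gundy inequality, using $\|g_n-g\|_{\bH_{p,\theta}^{\gamma}(\tau,l_2)}\le\|g_n-g\|_{\bH_{p,\theta}^{\gamma+1,(1)}(\tau,l_2)}$. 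The initial terms converge in $L_p(\Omega)$ because $U_{p,\theta}^{\gamma+2}\subset\fH_{p,\theta}^{\gamma}$.

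The main obstacle is the left-hand side. Convergence of $u_n$ in $\bH_{p,\theta}^{\gamma}(\tau)$ only gives convergence for a.e. $t$, whereas the definition requires the identity for all $t\le\tau$ a.s. We handle this with \eqref{eq01072253}: applied to $u_n-u_m$, whose data are $(u_{0,n}-u_{0,m},f_n-f_m,g_n-g_m)$, it shows that $u_n$ is Cauchy in $L_p(\Omega;C([0,\tau];H_{p,\theta}^{\gamma}))$. The $C$-limit coincides with $u$ a.e. in $(\omega,t)$. Since the right-hand side of the limiting identity defines a continuous process, we may take $u$ to be this continuous modification, and the identity then holds for all $t\le\tau$ a.s.

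Thus $u\in\frH_{neu}^{\gamma+2}(p,\theta,\tau)$, and $\|u_n-u\|_{\frH_{neu}^{\gamma+2}(p,\theta,\tau)}\to0$ since every component of the norm converges.
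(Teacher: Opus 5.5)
Your proposal is correct and follows essentially the same route as the paper: take the limits of $u_n$, $u_{0,n}$, $f_n$, $g_n$ in their respective complete spaces, then use \eqref{eq01072253} to pass to the limit in the distributional identity for every $t\le\tau$. Your additional checks fill in details the paper leaves implicit. These are that $\bD u$ and $\bS u$ are well defined, that the space with the extra condition on $x_d^{-1}D_du$ is complete, that the right-hand side terms converge via duality and the Burkholder--Davis--Gundy inequality, and that $u$ may be replaced by its continuous modification.
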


\begin{proof}
    Suppose that $u_n$ is a Cauchy sequence in $\frH_{neu}^{\gamma+2}(p,\theta,\tau)$. If we write
    \begin{equation*}
        du_n = f_n dt + g^k_n dw_t^k; \quad u_n(0,\cdot)=u_{n0},
    \end{equation*}
    then $u_n, f_n, g_n$, and $u_{n0}$ each converge to $u, f, g$, and $u_0$ in their corresponding spaces as $n\to\infty$. 
    By \eqref{eq01072253}, for any $\phi\in C_c^\infty(\bR^d_+)$ and $t\leq \tau$,
    \begin{equation*}
        (u_n(t,\cdot),\phi) \to (u(t,\cdot),\phi) \, \text{ as } n\to\infty,
    \end{equation*}
    which implies that
        \begin{equation*}
        du = f dt + g^k dw_t^k; \quad u(0,\cdot)=u_{0}
    \end{equation*}
    in the sense of distributions. The lemma is proved.
\end{proof}

\section{A Simple Equation: Existence and Estimates} \label{sec_simple}

In this section, we consider a simple equation
\begin{equation} \label{eq_simple}
du=  \left(\Delta u + f\right) dt + g^kdw_t^k
\end{equation}
with the initial condition $u(0,\cdot)=0$, in the space $\frH_{neu}^{\gamma+2}(p,\theta,\tau)$.
More precisely, we aim to analyze the existence and low order regularity of solutions in the Neumann framework. We point out that in this section, $f$ and $g$ are independent of $u$.
Formally, the solution can be represented as
\begin{align*}
    u(t,x) &= \int_0^t \int_{\bR^d_+} p_{\cN}(t-s,x,y) f(s,y) dyds \nonumber
    \\
    &\quad+ \sum_{k=1}^\infty \int_0^t \int_{\bR^d_+} p_{\cN}(t-s,x,y) g^k(s,y) dydw_s^k
\end{align*}
where $p_{\cN}(t,x,y)$ is the Green function of the operator $\partial_t-\Delta$ in $\bR^d_+$ subject to the homogeneous Neumann boundary condition on the boundary $x_d=0$. More precisely,
\begin{equation*}
    p_{\cN}(t,x,y) = p(t,x-y) + p(t,x-y^*),
\end{equation*}
where $y^*=(y',-y_d)$ and
\begin{eqnarray*}
    p(t,x) = \frac{1}{(4\pi t)^{d/2}} e^{-\frac{|x|^2}{4t}}.
\end{eqnarray*}

\begin{lemma}
    Let $\gamma_0>-1$, $\gamma_1\in\bR$, and $\sigma>0$.
 Then, for any $t>0$ and $x\in \bR^d_+$, 
    \begin{align} \label{eq3102020}
    \int_{\bR^d_+} \frac{{y_d}^{\gamma_0}}{(\sqrt{t}+y_d)^{\gamma_1}} p(\sigma t,x-y) dy \leq N(\sqrt{t}+x_d)^{\gamma_0-\gamma_1},
    \end{align}
    where $N=N(d,\sigma,\gamma_0,\gamma_1)$. 
\end{lemma}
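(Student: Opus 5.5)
The first step is to integrate out the tangential variables. Since $p(\sigma t,x-y)=p_{d-1}(\sigma t,x'-y')\,p_1(\sigma t,x_d-y_d)$, where $p_{d-1},p_1$ denote the Gaussian kernels in the indicated dimensions, and $\int_{\bR^{d-1}}p_{d-1}(\sigma t,x'-y')dy'=1$, the estimate \eqref{eq3102020} reduces to the one-dimensional bound
\begin{equation*}
J(t,x_d):=\int_0^\infty \frac{y_d^{\gamma_0}}{(\sqrt t+y_d)^{\gamma_1}}\,(\sigma t)^{-1/2}e^{-\frac{|x_d-y_d|^2}{4\sigma t}}\,dy_d\leq N(\sqrt t+x_d)^{\gamma_0-\gamma_1}.
\end{equation*}
Next we rescale: substitute $y_d=\sqrt t\,s$ and write $x_d=\sqrt t\,r$. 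Both sides are then homogeneous of degree $\gamma_0-\gamma_1$ in $\sqrt t$, so it is enough to prove
\begin{equation*}
\int_0^\infty \frac{s^{\gamma_0}}{(1+s)^{\gamma_1}}e^{-\frac{|r-s|^2}{4\sigma}}\,ds\leq N(1+r)^{\gamma_0-\gamma_1},\qquad r>0.
\end{equation*}

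We treat this by splitting into two cases. When $r\leq 2$, the right-hand side is comparable to $1$. The integrand is integrable near $s=0$ because $\gamma_0>-1$, and at infinity the Gaussian factor beats any polynomial growth; hence the integral is bounded by a constant depending only on $\sigma,\gamma_0,\gamma_1$.

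When $r>2$, we split the $s$-integral into the regions $s\in(r/2,2r)$ and its complement. On $(r/2,2r)$ we have $s^{\gamma_0}(1+s)^{-\gamma_1}\approx (1+r)^{\gamma_0-\gamma_1}$, with constants depending on the signs of the exponents, and $\int e^{-|r-s|^2/(4\sigma)}ds\leq N(\sigma)$. This gives the desired bound. On the complement, $|r-s|\geq r/2$ and also $|r-s|\geq s/2$ when $s\geq 2r$. We therefore bound $e^{-|r-s|^2/(4\sigma)}\leq e^{-r^2/(32\sigma)}e^{-|r-s|^2/(8\sigma)}$. The remaining integral $\int_0^\infty s^{\gamma_0}(1+s)^{-\gamma_1}e^{-|r-s|^2/(8\sigma)}ds$ grows at most polynomially in $r$: this uses $\gamma_0>-1$ near $s=0$ and the Gaussian decay for $s\geq 2r$. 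The factor $e^{-r^2/(32\sigma)}$ then dominates any power of $r$, including $(1+r)^{-(\gamma_0-\gamma_1)}$.

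The only delicate point is bookkeeping the comparability $s^{\gamma_0}(1+s)^{-\gamma_1}\approx(1+r)^{\gamma_0-\gamma_1}$ on the middle region when $\gamma_0$ or $\gamma_1$ is negative. This is routine, since $s\geq r/2\geq 1$ there. No earlier results from the paper are needed.
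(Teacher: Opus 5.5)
Your proof is correct, but it takes a different route from the paper's. The paper does not prove the normalized bound itself. It quotes Lemma 3.7 of \cite{KLS21} for $t=1$; that lemma carries additional parameters $\alpha,\beta$, which are set to $0$. It then passes to general $t>0$ by the same parabolic scaling $y=\sqrt t\,z$, $x=\sqrt t\,w$ that you use.

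You instead first integrate out $y'$ to get a one-dimensional Gaussian integral, and then prove the $t=1$ estimate by hand. You split into $r\le 2$ and $r>2$. For $r>2$ you further split into the comparable region $s\in(r/2,2r)$ and its complement, where $|r-s|\ge r/2$ and the factor $e^{-r^2/(32\sigma)}$ absorbs the at most polynomial growth of the remaining integral.

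The citation keeps the paper's proof to one line and rests on a more general lemma. Your version is self-contained and elementary. It also makes transparent that $\gamma_0>-1$ is used only for integrability at $y_d=0$ and that $\gamma_1\in\bR$ is unrestricted.

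Two small points of bookkeeping:
\begin{itemize}
\item The one-dimensional kernel is $(4\pi\sigma t)^{-1/2}e^{-|x_d-y_d|^2/(4\sigma t)}$; the missing factor $(4\pi)^{-1/2}$ only changes constants.
\item In the case $r\le 2$ you should say explicitly that the bound is uniform in $r$. For instance, $|r-s|\ge s/2$ for $s\ge 4$, so the Gaussian tail estimate does not depend on $r$.
\end{itemize}
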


\begin{proof}
    When $t=1$, \eqref{eq3102020} follows from \cite[Lemma 3.7]{KLS21} by taking $\gamma=\gamma_0$ and $\omega=\gamma_1$. In particular, one can put $\alpha=\beta=0$ in the lemma. The general case $t>0$ can be obtained by a scaling argument. The lemma is proved.
\end{proof}

\begin{lemma} \label{lem4162023}
    Let $p\in(1,\infty)$, $\theta\in(d-1,d-1+p)$, $T\in (0,\infty)$, and $\tau\leq T$ be a bounded stopping time. Assume that $f \in \bH_{0}^{\infty}(\tau)$. Then there is a solution $u$ to \eqref{eq_simple} with $u_0=0$ and $g=0$ such that $u\in \bL_{p,\theta}(\tau)$ and $x_d^{-1}D_d u \in \bL_{p,\theta}(\tau)$.
\end{lemma}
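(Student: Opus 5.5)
We define $u$ by the Neumann heat-kernel formula,
\begin{equation*}
u(t,x) := \int_0^t\int_{\bR^d_+} p_{\cN}(t-s,x,y) f(s,y)\,dy\,ds,
\end{equation*}
and verify three things in turn: that $u$ solves \eqref{eq_simple} in the sense of distributions, that $u\in\bL_{p,\theta}(\tau)$, and that $x_d^{-1}D_du\in\bL_{p,\theta}(\tau)$.

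Since $f\in\bH_0^\infty(\tau)$, we have $f=\sum_i 1_{(\tau_{i-1},\tau_i]}(t)f_i(x)$ with $f_i\in C_c^\infty(\bR^d_+)$. Hence for each $\omega$ the function $u$ is a finite sum of pieces of the form $\int_{\tau_{i-1}\wedge t}^{\tau_i\wedge t} P^{\cN}_{t-s}f_i\,ds$. Here $P^{\cN}_t$ is the Neumann heat semigroup, obtained from the whole-space heat semigroup applied to the even reflection of $f_i$ across $x_d=0$. Each piece is smooth for $t>0$ and satisfies $\partial_t u=\Delta u+f$ classically. This gives the distributional identity. Predictability follows because the $\tau_i$ are stopping times and the map $(t,\omega)\mapsto u$ is continuous and adapted. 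Since the equation is deterministic pathwise, it suffices to prove the bounds for each $\omega$ with constants independent of $\omega$, and then integrate in $\omega$.

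For the $L_{p,\theta}$ bound, we use that $f_i$ is supported in a compact set $K\subset\{a<y_d<b,\ |y'|<b\}$ and that $t\le T$. Then $|u(t,x)|\le N\int_0^t\int_K p_{\cN}(t-s,x,y)\,dy\,ds$. For $x$ near $K$ this is bounded. Far from $K$ it has Gaussian decay, which is integrable against $x_d^{\theta-d}$ near $x_d=0$ because $\theta-d>-1$ and $u$ is bounded there. This gives $u\in L_{p,\theta}$ uniformly in $t\le T$.

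The key step is the normal derivative. We compute
\begin{equation*}
D_{x_d}p_{\cN}(t,x,y)=-\frac{x_d-y_d}{2t}p(t,x-y)-\frac{x_d+y_d}{2t}p(t,x-y^*).
\end{equation*}
This vanishes at $x_d=0$. Writing it as an odd function of $x_d$, we get $|D_{x_d}p_{\cN}|\le N x_d\, t^{-1}\bigl(1+|x-y|^2/t\bigr)p(2t,\cdot)$-type bounds, where the factor $x_d$ comes from the mean value theorem in $x_d$ applied to the odd function. Concretely, $D_du(t,x',0)=0$ and $D_{dd}u$ is bounded on compacts in $t$. So for $x_d\le 1$ we have $|x_d^{-1}D_du|\le \sup|D_{dd}u|$, and the weight $x_d^{\theta-d}$ is integrable near $0$ since $\theta>d-1$. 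The main obstacle is making this uniform: $D_{dd}u$ is bounded only after integrating in $s$, because the singularity $(t-s)^{-1}$ is canceled by the smoothness of $f_i$. We handle this by moving derivatives onto $f_i$ via the reflection, namely $D_{dd}P^{\cN}_{t-s}f_i=P^{\cN}_{t-s}D_{dd}\tilde f_i$ with $\tilde f_i$ the even extension. This is legitimate because $f_i$ vanishes near the boundary, so $\tilde f_i$ is smooth. For large $|x|$ we use Gaussian decay (via \eqref{eq3102020} if convenient) to get integrability of $|x_d^{-1}D_du|^p x_d^{\theta-d}$ at infinity. Combining both regions yields $x_d^{-1}D_du\in\bL_{p,\theta}(\tau)$.
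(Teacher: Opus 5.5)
Your construction of $u$ (the whole-space heat potential of the even extension of $f$) is the same as the paper's. Your verification of the two memberships is correct but takes a genuinely different route.

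The paper proves \emph{quantitative} bounds with constants independent of $f$:
\begin{itemize}
\item For $u$, it gets $\|u\|_{\bL_{p,\theta}(\tau)}\le N\|f\|_{\bL_{p,\theta}(\tau)}$ from a weighted whole-space parabolic estimate. The hypothesis $\theta\in(d-1,d-1+p)$ is exactly the condition $|x_d|^{\theta-d}\in A_p$.
\item For the normal derivative, it uses $D_{x_d}p_{\cN}=-D_{y_d}p_{\cD}$ and an integration by parts to write
$D_du=\int_0^t\int_{\bR^d_+}p_{\cD}(t-s,x,y)\,D_{y_d}f(s,y)\,dy\,ds$.
So $D_du$ is the Dirichlet heat potential of $D_df$, and a weighted Dirichlet estimate gives $\|x_d^{-1}D_du\|_{\bL_{p,\theta}(\tau)}\le N\|f\|_{\bL_{p,\theta}(\tau)}$.
\end{itemize}
You instead argue pathwise and qualitatively, using that the $f_i$ are nonrandom, smooth and supported away from $\{x_d=0\}$. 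The reflected solution is smooth and even in $x_d$, so $D_du(t,x',0)=0$. Commuting $D_{dd}$ onto $\tilde f_i$ makes $D_{dd}u$ bounded, and the mean value theorem plus Gaussian tails finish the argument.

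Your route is elementary, avoids both external weighted estimates, and needs only $\theta>d-1$. Its constants depend on the $f_i$ (sup norms and supports), not on $\|f\|_{\bL_{p,\theta}(\tau)}$. That is enough: the lemma asserts only membership. In Lemma \ref{lem4162045} the quantitative control comes afterwards from the a priori estimate \eqref{eq2511151744}, which needs only $u,\,x_d^{-1}D_du\in\bL_{p,\theta}(\tau)$. The paper's version gives a bound in terms of $\|f\|_{\bL_{p,\theta}(\tau)}$ alone, which is more than is needed here.

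Two points should be stated more precisely.
\begin{itemize}
\item In the region $0<x_d\le 1$ with $|x'|$ large, the uniform bound $|x_d^{-1}D_du|\le\sup|D_{dd}u|$ is not integrable in $x'$. Gaussian decay of $D_du$ alone does not help after dividing by $x_d$, since that would require $\theta>d-1+p$. Use instead the pointwise bound $|x_d^{-1}D_du(t,x',x_d)|\le\sup_{0<\xi<x_d}|D_{dd}u(t,x',\xi)|$, together with Gaussian decay of
$D_{dd}u(t,x)=\int_0^t\int_{\bR^d}p(t-s,x-y)\,D_{dd}\tilde f(s,y)\,dy\,ds$.
This decay holds uniformly in $t\le T$ because each $D_{dd}\tilde f_i$ has compact support.
\item Since $f$ jumps in time at the $\tau_i$, $u$ is not $C^1$ in $t$ there. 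It satisfies the equation in integrated form rather than classically, which is exactly what the distributional definition requires.
\end{itemize}
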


\begin{proof}
    Let us extend $f$ to $x_d<0$ by even extension, which yields that
    \begin{equation*}
        f\in L_p((0,\tau],\cP; L_p(\bR^d, |x_d|^{\theta-d} dx)).
    \end{equation*}
    Then it can be shown that
    \begin{equation*}
        u(t,x) = \int_0^t \int_{\bR^d} p(t-s,x-y) f(s,y) dyds = \int_0^t \int_{\bR^d_+} p_{\cN}(t-s,x,y) f(s,y) dyds
    \end{equation*}
    is a distributional solution to \eqref{eq_simple} on $\bR^d$, and consequently on $\bR^d_+$ as well.
    
    Since $|x_d|^{\theta-d}$ is in the $A_p(\bR)$ Muckenhoupt class of weights, by using \cite[Theorem 3.4]{HKP20}, one can easily deduce that for each $\omega\in\Omega$,
    \begin{align*}
        \int_0^\tau \int_{\bR^d} |u(\omega,t,x)|^p |x_d|^{\theta-d} dxdt \leq N \int_0^\tau \int_{\bR^d} |f(\omega,t,x)|^p |x_d|^{\theta-d} dxdt.
    \end{align*}
    Taking expectations on both sides, we get
    \begin{equation*}
        \|u\|_{\bL_{p,\theta}(\tau)} \leq N \|f\|_{\bL_{p,\theta}(\tau)}.
    \end{equation*}

    Now it remains to show $x_d^{-1}D_d u \in \bL_{p,\theta}(\tau)$. Since $f\in \bH_{0}^{\infty}(\tau)$,
    \begin{align*}
        D_du(t,x) &= \int_0^t \int_{\bR^d_+} D_{x_d} p_{\cN}(t-s,x,y) f(s,y) dyds 
        \\
        &= -\int_0^t \int_{\bR^d_+} D_{y_d} p_{\cD}(t-s,x,y) f(s,y) dyds
        \\
        &= \int_0^t \int_{\bR^d_+} p_{\cD}(t-s,x,y) D_{y_d}f(s,y) dyds,
    \end{align*}
    where $p_{\cD}(t,x,y) := p(t,x-y)-p(t,x-y^*)$. Thus, by \cite[Theorem 1]{KN14},
    \begin{equation*}
        \|x_d^{-1}D_du\|_{\bL_{p,\theta}(\tau)} \leq N \|f\|_{\bL_{p,\theta}(\tau)},
    \end{equation*}
    which obviously proves the desired claim.
    The lemma is proved.
\end{proof}

\begin{lemma} \label{lem4161852}
    Let $p\in[2,\infty)$, $\theta\in(d-1,d-1+p)$, $T\in (0,\infty)$, $\tau\leq T$ be a bounded stopping time, and $g\in \overline{\bH}_0^{\infty}(\tau,l_2)$. Then
    \begin{equation} \label{eq4161849}
        u(t,x) := \sum_{k=1}^\infty \int_0^t \int_{\bR^d_+} p_{\cN}(t-s,x,y) g^k(s,y) dy dw_s^k
    \end{equation}
    is in the space $\bL_{p,\theta}(\tau)$, and there is $N=N(d,p,\theta)$ so that
    \begin{equation} \label{eq4161514}
        \|u\|_{\bL_{p,\theta}(\tau)} \leq NT^{\frac{1}{2}} \|g\|_{\bL_{p,\theta}(\tau,l_2)}.
    \end{equation}
\end{lemma}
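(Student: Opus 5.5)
Since $g\in\overline{\bH}_0^{\infty}(\tau,l_2)$ has only finitely many nonzero components, each a finite sum of $1_{(\tau_{i-1},\tau_i]}(t)g_i^k(x)$ with $g_i^k\in C_c^\infty(\overline{\bR^d_+})$, the stochastic integral in \eqref{eq4161849} is well defined for every $(t,x)$. I would estimate it pointwise in $(t,x)$ and then integrate. Fix $t\le\tau$ (more precisely, work with $1_{t\le\tau}$ and use $\tau\le T$) and $x\in\bR^d_+$. The Burkholder--Davis--Gundy inequality gives
\begin{equation*}
\bE|u(t,x)|^p \leq N\,\bE\left(\int_0^t \Big|\int_{\bR^d_+} p_{\cN}(t-s,x,y)g(s,y)\,dy\Big|_{l_2}^2 ds\right)^{p/2}.
\end{equation*}
By Minkowski's inequality for the $l_2$-norm, the inner term is bounded by $\int p_{\cN}(t-s,x,y)|g(s,y)|_{l_2}dy$. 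Moreover, $p_{\cN}(t-s,x,y)\le 2p(t-s,x-y)$ after even reflection, or we may simply keep both Gaussians. Hence the problem reduces to a scalar, deterministic-kernel estimate for $G(s,y):=|g(s,y)|_{l_2}$ extended evenly to $\bR^d$.

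Next I apply Jensen/H\"older in $y$: since $\int p(t-s,x-y)dy=1$, we have $(\int p\,G\,dy)^2\le \int p\,G^2\,dy$. Then H\"older in $s$ with exponents $p/2$ and $p/(p-2)$ gives
\begin{equation*}
\left(\int_0^t\int p(t-s,x-y)G^2\,dy\,ds\right)^{p/2}\le t^{\frac{p-2}{2}}\int_0^t\int p(t-s,x-y)G^p(s,y)\,dy\,ds .
\end{equation*}
Here Jensen is used again, now for the probability measure $p\,dy$ (or $(p\,dy)^{p/2}$). Now multiply by $x_d^{\theta-d}$ and integrate over $x\in\bR^d_+$ and $t\in(0,T)$, using Fubini. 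The key quantity is
\begin{equation*}
\int_{\bR^d_+} p(t-s,x-y)\,x_d^{\theta-d}\,dx,
\end{equation*}
which must be bounded by $N y_d^{\theta-d}$. By symmetry this is \eqref{eq3102020} with $\gamma_0=\theta-d>-1$ and $\gamma_1=0$, giving $N(\sqrt{t-s}+y_d)^{\theta-d}$. When $\theta\ge d$ this is not bounded by $y_d^{\theta-d}$ uniformly, so this step is the main obstacle.

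To get around it, I would not use plain Jensen in $y$ but a weighted version. Write $p\,G = (p\,w^{-1/p'}...)$, i.e. apply H\"older with a weight $y_d^{\alpha}$ so that the $x$-integral produces exactly $y_d^{\theta-d}$ up to factors $(\sqrt{t-s}+y_d)$ that cancel, using \eqref{eq3102020} twice: once in $y$ with $\gamma_0=-\alpha(\cdot)$, and once in $x$. The admissible choices of exponent rely on $\theta\in(d-1,d-1+p)$, since $|x_d|^{\theta-d}\in A_p$. Alternatively, and more cleanly, I would use the fact that $|x_d|^{\theta-d}$ is an $A_{p/2}$ weight when $\theta-d\in(-1,p/2-1)$, and otherwise fall back to the weighted trick.

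An easier alternative route is the $A_p$ property: the maximal function bound $\sup_s\int p(t-s,x-y)G(s,y)dy\le M_xG(s,\cdot)(x)$. Then
\begin{equation*}
\bE|u(t,x)|^p\le N t^{(p-2)/2}\,\bE\int_0^t (M_xG(s,\cdot)(x))^p\,ds,
\end{equation*}
obtained by bounding $(\int p\,G\,dy)^2\le (MG)^2$ and applying H\"older in $s$. Integrating in $x$ against $|x_d|^{\theta-d}$ and using boundedness of the Hardy--Littlewood maximal operator (strong maximal in $x_d$ direction, or full maximal with $A_p$ weight $|x_d|^{\theta-d}$, valid since $-1<\theta-d<p-1$) on $L_p(\bR^d,|x_d|^{\theta-d}dx)$ yields $\|u\|^p_{\bL_{p,\theta}(\tau)}\le N T^{p/2}\|g\|^p_{\bL_{p,\theta}(\tau,l_2)}$ after integrating $t$ over $(0,T)$. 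This is the plan I would commit to; the factor $T^{1/2}$ comes from $t^{(p-2)/2}\cdot t\le T^{p/2}$ after integration.
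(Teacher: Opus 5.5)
Your committed route (the maximal-function argument in your last paragraph) is correct, but it differs from the paper's proof. The paper keeps the Gaussian and splits it by H\"older's inequality in $y$, using the boundary factor $R_{t-s,y}=y_d/(y_d+\sqrt{t-s})$ raised to $\mp\beta$ with $\beta=(\theta-d)/p$. The factor $\int p(t-s,x-y)R_{t-s,y}^{-\beta p'}\,dy$ is then bounded by a constant via \eqref{eq3102020}; this is where $\theta<d-1+p$ is used. After Fubini, the $x$-integral is at most $N y_d^{\beta p}(\sqrt{t-s}+y_d)^{\theta-d-\beta p}=N y_d^{\theta-d}$. This is exactly how the paper cancels the growth $(\sqrt{t-s}+y_d)^{\theta-d}$ that you flagged for $\theta>d$; your sketched ``weighted Jensen'' paragraph is this argument, left unexecuted. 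You instead bound the Gaussian average of the even extension of $|g|_{l_2}$ pointwise by its Hardy--Littlewood maximal function. You then apply Muckenhoupt's theorem, using that $|x_d|^{\theta-d}\in A_p(\bR^d)$, which holds exactly when $-1<\theta-d<p-1$. Combined with BDG, H\"older in $s$, and $\int_0^T t^{(p-2)/2}\,dt\le T^{p/2}$, this yields \eqref{eq4161514} with $N=N(d,p,\theta)$. Your version is shorter and relies only on the $A_p$ fact the paper already uses in Lemma~\ref{lem4162023}. The paper's version is self-contained given \eqref{eq3102020}, and, more to the point, it is the template reused in Lemma~\ref{lem4162006}. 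There the Dirichlet factors $R_{t,x}R_{t,y}$ must be retained to gain $x_d^{-1}$, and the weight range extends beyond $A_p$ (up to $\theta<d-1+2p$); a maximal-function bound cannot deliver either. When you write it up, make the stopping-time step explicit. Since $g$ vanishes for $t>\tau$, define $u$ by the same formula on $(0,T]$. Then bound $\bE\int_0^\tau\int|u|^px_d^{\theta-d}\,dx\,dt$ by $\int_0^T\int\bE|u(t,x)|^px_d^{\theta-d}\,dx\,dt$, so that the pointwise-in-$(t,x)$ BDG estimate applies despite $\tau$ being random.
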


\begin{proof}
Let $\beta:=\frac{\theta-d}{p}$.
Since $p_{\cN}(t,x,y) \leq 2p(t,x-y)$, by the Burkholder-Davis-Gundy inequality and H\"older's inequality,
\begin{align*}
\bE|u(t,x)|^p &\leq N \bE \Bigg[\int_0^t \left(\int_{\bR^d_+} p(t-s,x-y)  R_{t-s,y}^{-\beta p'} dy\right)^{2/p'}
\\
&\qquad \quad \times \left(\int_{\bR^d_+} p(t-s,x-y) R_{t-s,y}^{\beta p} |g(s,y)|_{l_2}^p dy \right)^{2/p} ds\Bigg]^{p/2}
\\
&=: N \bE \left[ \int_0^t I^{2/p'}(t,s,x) \times II^{2/p}(t,s,x) ds \right]^{p/2},
\end{align*}
where $1/p+1/p'=1$ and $R_{t,x}:=\frac{x_d}{x_d+\sqrt{t}}$.
Again by H\"older's inequality,
\begin{align*}
\bE|u(t,x)|^p &\leq N \|I(t,\cdot,x)\|_{L_{2(p-1)/(p-2)}((0,t))}^{p-1} \times \bE\left[ \int_0^t II(t,s,x) ds\right].
\end{align*}
Here, we use the common convention $1/0:=\infty$ for $p=2$. 

Let us estimate $\|I(t,\cdot,x)\|_{L_{2(p-1)/(p-2)}((0,t))}^{p-1}$. 
By \eqref{eq3102020} with $\gamma_0=\gamma_1=-\beta p' >-1$, for each $0<s<t<T$ and $x\in \bR^d_+$, we see that
\begin{equation*}
    I(t,s,x)\leq N,
\end{equation*}
which yields
\begin{align} \label{eq4161511}
\|I(t,\cdot,x)\|_{L_{2(p-1)/(p-2)}((0,t))}^{p-1} \leq N T^{\frac{p-2}{2}}.
\end{align}

By \eqref{eq4161511} and Fubini's theorem,
\begin{align} \label{eq4161513}
&\bE\int_0^{\tau} \int_{\bR^d_+} |u(t,x)|^p x_d^{\theta-d} dxdt \leq N T^{\frac{p-2}{2}} \bE \int_0^{\tau} \int_{\bR^d_+} \left[ \int_0^t II(t,s,x) ds\right] x_d^{\theta-d} dtdx \nonumber
\\
&\leq N T^{\frac{p-2}{2}}\bE\int_0^{\tau} \int_{\bR^d_+} |g(s,y)|_{l_2}^p \times\left(\int_s^{\tau} \int_{\bR^d_+} p(t-s,x-y) R_{t-s,y}^{\beta p} x_d^{\theta-d} dx dt\right) dy ds.
\end{align}
By \eqref{eq3102020} with $\gamma_0= \theta-d$ and $\gamma_1=0$, we get
\begin{align*}
&\int_s^{\tau} \int_{\bR^d_+} p(t-s,x-y) R_{t-s,y}^{\beta p} x_d^{\theta-d} dx dt 
\\
&\leq N y_d^{\beta p}\int_s^T (\sqrt{t-s}+y_d)^{\theta-d-\beta p} dt \leq N T y_d^{\theta-d}.
\end{align*}
This and \eqref{eq4161513} yield \eqref{eq4161514}. The lemma is proved.
\end{proof}

\begin{lemma} \label{lem4162006}
    Let $p\in[2,\infty)$, $\theta\in(d-1,d-1+2p)$, $T\in (0,\infty)$, $\tau\leq T$ be a bounded stopping time, and $g\in \overline{\bH}_0^{\infty}(\tau,l_2)$. Then
    \begin{equation} \label{eq5262329}
        u(t,x) := \sum_{k=1}^\infty \int_0^t \int_{\bR^d_+} p_{\cD}(t-s,x,y) g^k(s,y) dy dw_s^k
    \end{equation}
    is in the space $\bL_{p,\theta-p}(\tau)$, and there is $N=N(d,p,\theta)$ so that
    \begin{equation} \label{eq3121250}
        \|x_d^{-1}u\|_{\bL_{p,\theta}(\tau)} \leq N \|g\|_{\bL_{p,\theta}(\tau,l_2)}.
    \end{equation}
\end{lemma}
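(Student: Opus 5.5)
We follow the scheme of the proof of Lemma \ref{lem4161852}, but now exploit the vanishing of the Dirichlet kernel at the boundary to gain the factor $x_d^{-1}$. The key pointwise bound we would use is
\begin{equation*}
0\le p_{\cD}(t,x,y)\le N\,\frac{x_d}{x_d+\sqrt t}\, p(\sigma t,x-y)
\end{equation*}
for some $\sigma>1$ and all $x,y\in\bR^d_+$. This follows from
\begin{equation*}
p(t,x-y)-p(t,x-y^*)=p(t,x-y)\bigl(1-e^{-x_dy_d/t}\bigr)
\end{equation*}
together with $1-e^{-a}\le \min\{1,a\}$. Indeed, if $x_d\ge\sqrt t$ the factor is trivially bounded. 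If $x_d<\sqrt t$, we use $x_dy_d/t\le (x_d/\sqrt t)(y_d/\sqrt t)$ and absorb $y_d/\sqrt t$ into the Gaussian: write $y_d\le x_d+|x-y|$, and note that $(|x-y|/\sqrt t)\,e^{-|x-y|^2/4t}\le N e^{-|x-y|^2/(4\sigma t)}$. A cleaner symmetric form would be $p_{\cD}\le N R_{t,x}R_{t,y}\,p(\sigma t,x-y)$, but the single factor $R_{t,x}$ suffices here.

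With this bound in hand, set $\beta:=\frac{\theta-d}{p}$. By the Burkholder--Davis--Gundy inequality,
\begin{equation*}
\bE|x_d^{-1}u(t,x)|^p\le N\,\bE\Bigl[\int_0^t (x_d+\sqrt{t-s})^{-2}\Bigl(\int_{\bR^d_+} p(\sigma(t-s),x-y)\,|g(s,y)|_{l_2}\,dy\Bigr)^2ds\Bigr]^{p/2}.
\end{equation*}
The weight $(x_d+\sqrt{t-s})^{-2}$ provides the extra factor $(t-s)^{-1}$ that was absent in Lemma \ref{lem4161852}, so this time we cannot simply extract a power of $T$. In exchange, the estimate is scale invariant, which explains why no $T$ appears in \eqref{eq3121250}. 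We would split the inner integral by H\"older's inequality with weights $R_{t-s,y}^{\pm\beta}$, exactly as before. The $I$-factor is bounded by \eqref{eq3102020}, which requires $-\beta p'>-1$, i.e.\ $\theta<d-1+p$. Since the lemma allows $\theta$ up to $d-1+2p$, the exponents will have to be shifted, for instance by using weights of the form $(y_d/(y_d+\sqrt{t-s}))^{\mu}$ with $\mu$ chosen so that the $I$-integrals converge. The reflected term in $p_{\cD}$ is precisely what allows the larger range.

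The main obstacle is the time integral. After H\"older's inequality in $s$, one needs
\begin{equation*}
\int_s^\infty\int_{\bR^d_+} x_d^{\theta-d-p}\,(x_d+\sqrt{t-s})^{-\kappa}\,p(\sigma(t-s),x-y)\,(\cdots)\,dx\,dt\le N y_d^{\theta-d}.
\end{equation*}
This estimate is dimensionally consistent, since $p$ powers of length are compensated by $\int dt\sim y_d^2$ paired against $(t-s)^{-1}$ factors. Our first attempt would be the following. Rather than using H\"older's inequality in $s$ in the crude way of Lemma \ref{lem4161852} (which would lose the scaling), we would apply Minkowski's inequality to pass to $\bE\bigl(\int_0^t (x_d+\sqrt{t-s})^{-2}(P_{t-s}|g(s)|)^2ds\bigr)^{p/2}$. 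We would then invoke a weighted Littlewood--Paley/Krylov-type square function estimate for the kernel $(x_d+\sqrt r)^{-1}p(\sigma r,\cdot)$ in $L_p(x_d^{\theta-d-p}dx)$, with $p\ge2$. Equivalently, this is Krylov's half-space estimate \cite{Kry_halfspace} for the Dirichlet stochastic heat equation, whose weighted estimate $\|x_d^{-1}u\|_{\bL_{p,\theta}}\le N\|g\|_{\bL_{p,\theta}}$ (the $\gamma=-1$ level) holds precisely for $d-1<\theta<d-1+p$. Given that the stated range is wider, the fallback is to prove the bound directly. We would use the factor $R_{t,x}R_{t,y}$, which decays in both variables, and verify the resulting one-dimensional integrals in $x_d$ and $t$ via \eqref{eq3102020} together with Fubini's theorem. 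We expect the exponent bookkeeping in this step to determine the admissible range $\theta<d-1+2p$.
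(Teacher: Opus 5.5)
There is a genuine gap: the proposal does not yet contain an argument that works on the stated range, and two things are missing.

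\textbf{The kernel bound.} Your main line asserts that the single-factor bound $p_{\cD}(t,x,y)\le NR_{t,x}\,p(\sigma t,x-y)$ ``suffices here''. That is false for $\theta\ge d-1+p$, and no choice of auxiliary weights $R_{t-s,y}^{\pm\mu}$ in H\"older's inequality can repair it. Their product is $1$, so splitting them only redistributes: the $I$-integral forces $\mu p'<1$, while the final $t$-integral forces $\mu p>\theta-d$, and together these give $\theta<d-1+p$. The obstruction is structural. For $\theta\ge d-1+p$ one has $(\theta-d)/(p-1)\ge1$, so $L_{p,\theta}$ contains nonnegative functions that are not integrable up to $y_d=0$. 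For such $g$ the majorant $\int p(\sigma t,x-y)|g|\,dy$ is infinite for every $x$, and any estimate derived from this majorant fails (approximate monotonically by smooth $g$). The decay in $y_d$ has to come from the cancellation in $p(t,x-y)-p(t,x-y^*)$, that is, from the two-sided bound $p_{\cD}\le NR_{t,x}R_{t,y}\,p(\sigma t,x-y)$. The paper uses this from the outset; you mention it only as a fallback.

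\textbf{The time integral.} You correctly observe that the H\"older-in-$s$ step of Lemma \ref{lem4161852}, with $I\le N$, produces a power of $T$ instead of the right power of $x_d$. However, your remedies are either a square-function/Krylov estimate, which you yourself note only covers $\theta<d-1+p$, or an unspecified ``direct verification''. The missing idea is to split the $x$-factor as well as the $y$-factor:
\begin{itemize}
\item Put $R_{t-s,x}^{1-\beta_1}R_{t-s,y}^{1-\beta_2}$ into the $p'$-factor and $R_{t-s,x}^{\beta_1}R_{t-s,y}^{\beta_2}$ into the $p$-factor.
\item Choose $-2/p<-\beta_1<(\theta-d-1)/p$, with $\beta_1=1$ if $p=2$, and $(\theta-d)/p<\beta_2<2-1/p$.
\item Then \eqref{eq3102020} gives $I(t,s,x)\le NR_{t-s,x}^{(1-\beta_1)p'}$. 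This needs $(1-\beta_2)p'>-1$, i.e.\ $\beta_2<2-1/p$.
\item The condition $\beta_1<2/p$ gives $\|I(t,\cdot,x)\|^{p-1}_{L_{2(p-1)/(p-2)}((0,t))}\le Nx_d^{p-2}$. This bound is scale invariant and turns the weight $x_d^{\theta-d-p}$ into $x_d^{\theta-d-2}$.
\item After Fubini's theorem, apply \eqref{eq3102020} in $x$ with $\gamma_0=\theta-d-2+\beta_1p>-1$ and $\gamma_1=\beta_1p$. Then integrate in $t$, which converges because $\beta_2p>\theta-d$. The result is $Ny_d^{\theta-d}$ with no dependence on $T$.
\end{itemize}
The admissible range $\theta<d-1+2p$ is exactly the condition that the two requirements on $\beta_2$ are compatible.
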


\begin{proof}
For $p=2$, we denote $\beta_1:=1$, and for $p>2$ we choose $\beta_1$ satisfying
\begin{align} \label{eq3121234}
-\frac{2}{p} < - \beta_1 < \frac{\theta-d-1}{p}.
\end{align}
Since $\theta-d>-1$, we can take such $\beta_1$ for $p>2$, and the second inequality holds if $p=2$.
Due to $\theta-d<2p-1$, we can also take $\beta_2$ satisfying
\begin{align*}
\frac{\theta-d}{p} < \beta_2 < 2-\frac{1}{p}.
\end{align*}
Let us notice that
    \begin{equation} \label{eq4161845}
        p_{\cD}(t,x,y) \leq N \frac{R_{t,x}R_{t,y}}{t^{d/2}}e^{-\frac{|x-y|^2}{4\sigma t}} = NR_{t,x}R_{t,y}p(\sigma t,x-y),
    \end{equation}
    for some $\sigma>0$, where
\begin{equation*}
    R_{t,x}:=\frac{x_d}{x_d+\sqrt{t}}, \quad R_{t,y}:=\frac{y_d}{y_d+\sqrt{t}}.
\end{equation*}
 By the Burkholder-Davis-Gundy inequality, \eqref{eq4161845}, and H\"older's inequality,
\begin{align*}
&\bE|u(t,x)|^p 
\\
&\leq N \bE \Bigg[\int_0^t \left(\int_{\bR^d_+} p(\sigma(t-s),x-y) R_{t-s,x}^{(1-\beta_1) p'} R_{t-s,y}^{(1-\beta_2)p'} dy\right)^{2/p'}
\\
&\qquad \quad \times \left(\int_{\bR^d_+} p(\sigma(t-s),x-y) R_{t-s,x}^{\beta_1 p} R_{t-s,y}^{\beta_2p} |g(s,y)|_{l_2}^p dy \right)^{2/p} ds\Bigg]^{p/2}
\\
&=: N \bE \left[ \int_0^t I^{2/p'}(t,s,x) \times II^{2/p}(t,s,x) ds \right]^{p/2},
\end{align*}
where $1/p+1/p'=1$.
Again by H\"older's inequality, we actually have
\begin{align} \label{2112271503}
\bE|u(t,x)|^p &\leq N \|I(t,\cdot,x)\|_{L_{2(p-1)/(p-2)}((0,t))}^{p-1} \times \bE \left[ \int_0^t II(t,s,x) ds\right].
\end{align}
Here, we use the common convention $1/0:=\infty$ for $p=2$. 

Let us estimate $\|I(t,\cdot,x)\|_{L_{2(p-1)/(p-2)}((0,t))}^{p-1}$. 
By \eqref{eq3102020} with $\gamma_0 = \gamma_1 = (1-\beta_2)p'$, for each $0<s<t<T$ and $x\in \bR^d_+$, we see that
\begin{equation*}
    I(t,s,x)\leq N R_{t-s,x}^{(1-\beta_1)p'}.
\end{equation*}
For $p=2$, using $\beta_1=1$, we obtain
\begin{align} \label{2112271453}
\|I(t,\cdot,x)\|_{L_{2(p-1)/(p-2)}((0,t))}^{p-1} \leq N.
\end{align}
For $p>2$, by the change of variables,
\begin{align*}
&\int_0^t I^{2(p-1)/(p-2)}(t,s,x)ds 
\\
&\leq N x_d^{2(1-\beta_1)p/(p-2)} \int_0^t (\sqrt{t-s}+x_d)^{2(\beta_1-1)p/(p-2)} ds
\\
&\leq N x_d^{2} \int_0^\infty (\sqrt{s}+1)^{2(\beta_1-1)p/(p-2)} ds \leq Nx_d^{2}.
\end{align*}
Here, the last integral is finite due to \eqref{eq3121234}.
This and \eqref{2112271453} yield that for $p\geq 2$,
\begin{align*}
\|I(t,\cdot,x)\|_{L_{2(p-1)/(p-2)}((0,t))}^{p-1} \leq N x_d^{p-2}.
\end{align*}

By \eqref{2112271503} and Fubini's theorem,
\begin{align} \label{2112271505}
&\bE\int_0^{\tau} \int_{\bR^d_+} |u(t,x)|^p x_d^{\theta-d-p} dxdt \nonumber
\\
&\leq N \bE \int_0^{\tau} \int_{\bR^d_+} \left[ \int_0^t II(t,s,x) ds\right] x_d^{\theta-d-2} dtdx \nonumber
\\
&\leq N \bE\int_0^{\tau} \int_{\bR^d_+} |g(s,y)|_{l_2}^p \nonumber
\\
&\qquad\qquad \times\left(\int_s^{\tau} \int_{\bR^d_+} p(\sigma(t-s),x-y) R_{t-s,x}^{\beta_1p} R_{t-s,y}^{\beta_2p} x_d^{\theta-d-2} dx dt\right) dy ds.
\end{align}
By \eqref{eq3102020} with $\gamma_0= \theta-d-2+\beta_1p$ and $\gamma_1=\beta_1p$, we get
\begin{align*}
&\int_s^{\tau} \int_{\bR^d_+} p(\sigma(t-s),x-y) R_{t-s,x}^{\beta_1p} R_{t-s,y}^{\beta_2p} x_d^{\theta-d-2} dx dt
\\
&\leq N \int_s^T (\sqrt{t-s}+y_d)^{\theta-d- 2 - \beta_2p} y_d^{\beta_2 p} dt
\\
&\leq N \int_0^\infty (\sqrt{t}+1)^{\theta-d- 2 - \beta_2p} y_d^{\theta-d} dt \leq N y_d^{\theta-d}.
\end{align*}
This and \eqref{2112271505} yield \eqref{eq3121250}. The lemma is proved.
\end{proof}

\begin{remark}
    In fact, \eqref{eq5262329} is the representation formula for solutions to \eqref{eq_simple} under the Dirichlet condition. When $\theta\in(d-1,d-1+p)$, \eqref{eq3121250} is obtained in \cite{Kry_halfspace}. The difference in the admissible range of $\theta$ arises from the fact that \cite{Kry_halfspace} employed an integration by parts argument applicable even in the presence of an inhomogeneous term $f$. Our result implies that one may treat solutions with irregular boundary behavior when $f=0$.
\end{remark}

\begin{lemma} \label{lem4162024}
    Let $p\in[2,\infty)$, $\theta\in(d-1,d-1+p)$, $T\in(0,\infty)$, $\tau\leq T$ be a bounded stopping time, and $g\in \overline{\bH}_0^{\infty}(\tau,l_2)$. Then $u$ defined by \eqref{eq4161849} is a solution to \eqref{eq_simple} with $u_0=0$ and $f=0$ such that $u\in \bL_{p,\theta}(\tau)$ and $x_d^{-1}D_du\in \bL_{p,\theta}(\tau)$.
\end{lemma}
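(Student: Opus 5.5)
The proof has three parts. First show that $u$ is a distributional solution. Then get $u\in\bL_{p,\theta}(\tau)$ from Lemma \ref{lem4161852}. Finally, write $D_du$ as a Dirichlet-type stochastic integral with data $D_dg$ and apply Lemma \ref{lem4162006}.

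\emph{Solution property.} Since $g\in\overline{\bH}_0^{\infty}(\tau,l_2)$, only finitely many $g^k$ are nonzero. Each is a finite sum of terms $1_{(\tau_{i-1},\tau_i]}(t)g^k_i(x)$ with $g^k_i\in C_c^\infty(\overline{\bR^d_+})$. Extend each $g^k_i$ evenly in $x_d$ to $\bR^d$; the result $\bar g$ is bounded, compactly supported and Lipschitz. Then
\[
u(t,x)=\sum_k\int_0^t\int_{\bR^d}p(t-s,x-y)\bar g^k(s,y)\,dy\,dw^k_s .
\]
By the standard whole-space theory (e.g.\ \cite{Kry99}), $u$ satisfies $du=\Delta u\,dt+\bar g^kdw^k_t$ in the sense of distributions on $\bR^d$, hence on $\bR^d_+$, with $u(0)=0$. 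The bound $\|u\|_{\bL_{p,\theta}(\tau)}\le NT^{1/2}\|g\|_{\bL_{p,\theta}(\tau,l_2)}$ is exactly \eqref{eq4161514} of Lemma \ref{lem4161852}.

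\emph{Normal derivative.} As in the proof of Lemma \ref{lem4162023}, use $D_{x_d}p_{\cN}(t,x,y)=-D_{y_d}p_{\cD}(t,x,y)$. Since $p_{\cD}$ vanishes at $y_d=0$, integrating by parts in $y_d$ gives
\[
D_du(t,x)=\sum_k\int_0^t\int_{\bR^d_+}p_{\cD}(t-s,x,y)\,D_{y_d}g^k(s,y)\,dy\,dw^k_s .
\]
The interchange of $D_{x_d}$ with the stochastic integral is justified by smoothness and finiteness of the sum. The boundary term drops regardless of the value of $g$ at $y_d=0$, so no compatibility condition on $g$ is needed. Moreover $D_dg\in\overline{\bH}_0^{\infty}(\tau,l_2)$, so Lemma \ref{lem4162006} (valid for $\theta\in(d-1,d-1+2p)$, which contains our range) applies with $D_dg$ in place of $g$:
\[
\|x_d^{-1}D_du\|_{\bL_{p,\theta}(\tau)}\le N\|D_dg\|_{\bL_{p,\theta}(\tau,l_2)}<\infty .
\]
This proves $x_d^{-1}D_du\in\bL_{p,\theta}(\tau)$.

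\emph{Main obstacle.} The delicate point is justifying the integration by parts and the differentiation under the stochastic integral for each fixed $(t,x)$ with $x_d>0$, that is, making sure the boundary term vanishes. The first step is to note that $g(s,\cdot)$ is smooth and compactly supported up to the boundary and is a finite sum in $k$ and in the stopping-time pieces. The kernel $D_{y_d}p_{\cD}(t-s,x,\cdot)$ is integrable in $s$ near $s=t$ because $x_d>0$ is fixed. One also checks that the stochastic integrand is square integrable in $s$: $\int_{\bR^d_+}|D_{x_d}p_{\cN}(t-s,x,y)|\,dy\le N(t-s)^{-1/2}$ would only give a logarithmic divergence, so instead move the derivative onto $g$ first via the formula above. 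That formula has the bounded integrand $\int p_{\cD}\,|D_dg|\,dy\le N$, so the representation holds pathwise after a stochastic Fubini argument.
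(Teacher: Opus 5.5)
Your proposal is correct and follows the paper's proof essentially step by step. You get $u\in\bL_{p,\theta}(\tau)$ from Lemma~\ref{lem4161852}; you use the identity $D_{x_d}p_{\cN}=-D_{y_d}p_{\cD}$ with integration by parts in $y_d$ (no boundary term, since $p_{\cD}$ vanishes at $y_d=0$) to write $D_du$ as the Dirichlet stochastic convolution of $D_dg$ and then apply Lemma~\ref{lem4162006}; and you obtain the solution property from the even extension of $g$ plus the whole-space result of \cite{Kry99}, all exactly as the paper does. Your extra justification for moving $D_{x_d}$ through the stochastic integral (once the derivative is on $g$, the integrand is bounded) is a detail the paper leaves implicit, and it is fine.
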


\begin{proof}
    By Lemma \ref{lem4161852}, one can easily find that $u \in \bL_{p,\theta}(\tau)$.
    For $D_du$, since $g\in\overline{\bH}_0^{\infty}(\tau,l_2)$ and $D_{x_d}p_{\cN}(t,x,y) = -D_{y_d} p_{\cD}(t,x,y)$,
    \begin{align*}
        D_du(t,x) &= - \sum_{k=1}^\infty \int_0^t \int_{\bR^d_+} D_{y_d}p_{\cD}(t-s,x,y) g^k(s,y) dy dw_s^k 
        \\
        &=  \sum_{k=1}^\infty \int_0^t \int_{\bR^d_+} p_{\cD}(t-s,x,y) D_{y_d}g^k(s,y) dy dw_s^k.
    \end{align*}
    Thus, $x_d^{-1}D_du\in \bL_{p,\theta}(\tau)$ follows from Lemma \ref{lem4162006}.
    By extending $g$ to $x_d<0$ by even extension, we get
    \begin{equation*}
        u(t,x) = \sum_{k=1}^\infty \int_0^t \int_{\bR^d} p(t-s,x-y) g^k(s,y) dy dw_s^k,
    \end{equation*}
    which is a solution to $du=\Delta udt + g^kdw_t^k$ in the whole space $\bR^d$ (see \cite[Lemma 4.2]{Kry99}). This obviously implies that $u$ satisfies \eqref{eq_simple} in the sense of distributions.
The lemma is proved.
\end{proof}

\section{A priori estimates} \label{sec_apriori}

In this section, our goal is to obtain the a priori estimate \eqref{main_est} for general coefficients. Throughout this section, $f$ and $g$ are independent of $u$.

We first prove an auxiliary result.

\begin{lemma}
    Let $p\in (1,\infty)$, $\theta\in(d-1,d-1+p)$, and $\tau$ be a bounded stopping time.
    Let $\varphi\in C_c^\infty(\bR)$ such that $\varphi\geq0$, $\int_{\bR}\varphi(x_d) dx_d=1$, and $\text{supp}(\varphi)\subset \{x_d<0\}$.
    For any $h\in \bL_{p,\theta}(\tau)$,
        \begin{equation} \label{eq9051117}
        \|h^{(\varepsilon)}\|_{\bL_{p,\theta}(\tau)} \leq N(d,p,\theta,\varphi) \|h\|_{\bL_{p,\theta}(\tau)},
    \end{equation}
where
\begin{equation*}
    h^{(\varepsilon)}(x) := \int_{0}^{\infty} \frac{1}{\varepsilon} \varphi\left(\frac{x_d-y_d}{\varepsilon}\right) h(x',y_d) dy_d, \quad \varepsilon>0.
\end{equation*}
\end{lemma}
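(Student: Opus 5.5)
The estimate is purely one-dimensional in $x_d$ and pointwise in $(\omega,t,x')$. So I would first reduce to showing, for a.e.\ fixed $(\omega,t,x')$,
\begin{equation*}
\int_0^\infty |h^{(\varepsilon)}(x',x_d)|^p x_d^{\theta-d}\,dx_d \leq N \int_0^\infty |h(x',y_d)|^p y_d^{\theta-d}\,dy_d,
\end{equation*}
with $N$ independent of $\varepsilon$. Integrating in $x'$, $t$ and then taking expectation gives \eqref{eq9051117}. Measurability and predictability of $h^{(\varepsilon)}$ are clear from Fubini.

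For the one-dimensional inequality, the key observation is the support condition. Since $\text{supp}(\varphi)\subset\{x_d<0\}$, say $\text{supp}(\varphi)\subset[-b,-a]$ with $0<a<b$, the integrand is nonzero only when $y_d\in[x_d+a\varepsilon,x_d+b\varepsilon]$. In particular $y_d>x_d$, so $h^{(\varepsilon)}(x_d)$ only averages $h$ over points farther from the boundary.

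The plan is to split into two regimes.

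\emph{Regime $x_d\geq \varepsilon$.} Here $y_d\in[x_d,x_d+b\varepsilon]\subset[x_d,(1+b)x_d]$, so $y_d\approx x_d$ and $x_d^{\theta-d}\approx y_d^{\theta-d}$ whatever the sign of $\theta-d$. By Jensen's inequality, $|h^{(\varepsilon)}(x_d)|^p\leq\int \varepsilon^{-1}\varphi((x_d-y_d)/\varepsilon)|h(y_d)|^p\,dy_d$. Integrating against $x_d^{\theta-d}$, replacing $x_d^{\theta-d}$ by $N y_d^{\theta-d}$, and using Fubini with $\int\varepsilon^{-1}\varphi\,dx_d=1$ gives the bound.

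\emph{Regime $0<x_d<\varepsilon$.} Here $y_d\in[a\varepsilon,(1+b)\varepsilon]$, so $y_d\approx\varepsilon$. Jensen gives $|h^{(\varepsilon)}(x_d)|^p\leq N\varepsilon^{-1}\int_{a\varepsilon}^{(1+b)\varepsilon}|h|^p\,dy_d$. Then
\begin{equation*}
\int_0^\varepsilon x_d^{\theta-d}\,dx_d = N\varepsilon^{\theta-d+1},
\end{equation*}
which is finite because $\theta>d-1$. The contribution is therefore at most
\begin{equation*}
N\varepsilon^{\theta-d}\int_{a\varepsilon}^{(1+b)\varepsilon}|h|^p\,dy_d\leq N\int_0^\infty |h|^p y_d^{\theta-d}\,dy_d,
\end{equation*}
using $y_d\approx\varepsilon$ on that interval.

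The only delicate point is this near-boundary regime. It is exactly where the support assumption on $\varphi$ (keeping the samples away from $y_d=0$) and the condition $\theta>d-1$ (integrability of the weight at $0$) are used. The upper bound $\theta<d-1+p$ is not needed here. Alternatively, one could note that $x_d^{\theta-d}$ is an $A_p(\bR)$ weight and the kernel is dominated by a Hardy–Littlewood maximal function, but the direct splitting above is more elementary and I would use it.
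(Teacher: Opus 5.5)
Your proof is correct, but it takes a different route from the paper. The paper bounds $|h^{(\varepsilon)}(x)|\le N\varepsilon^{-1}\int_{x_d}^{x_d+c\varepsilon}|\tilde h(x',y_d)|\,dy_d\le N\cM\tilde h(x)$. Here $\tilde h$ is the zero extension of $h$ and $\cM$ is the one-sided maximal function in $x_d$. The paper then invokes the weighted Hardy--Littlewood maximal theorem for the Muckenhoupt weight $|x_d|^{\theta-d}\in A_p(\bR)$, and this is exactly where both endpoints of $\theta\in(d-1,d-1+p)$ enter.

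Your two-regime splitting replaces that black box with Jensen's inequality and Fubini. It also gives a sharper conclusion: only $\theta>d-1$ is needed, which is necessary, since otherwise $h^{(\varepsilon)}$ does not vanish near $x_d=0$ while the weight fails to be integrable there.

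The price is that your near-boundary regime genuinely depends on the strict gap $a>0$ between $\text{supp}(\varphi)$ and the origin when $\theta>d$. Compactness guarantees this gap, and you use it correctly. If the support were allowed to reach $0$, you would need to control averages like $\varepsilon^{-1}\int_0^{2\varepsilon}|h|\,dy_d$ by $\|h\|_{L_p(y_d^{\theta-d}dy_d)}$. By H\"older and duality, that works exactly when $\theta<d-1+p$. The maximal-function argument handles that situation without caring about the gap, and it would even allow two-sided kernels.

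So your approach buys elementarity and the optimal range, while the paper's buys brevity and robustness with respect to the kernel. Since the paper assumes $\theta<d-1+p$ throughout, either argument suffices there.
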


\begin{proof}
    Suppose that $\text{supp}(\varphi) \subset (-c,0)$ for some $c>0$.
    \begin{equation*}
        |h^{(\varepsilon)}(t,x)| \leq \frac{N}{\varepsilon} \int_{x_d}^{x_d+c\varepsilon} |\tilde{h}(t,x',y_d)| dy_d \leq N \cM \tilde{h}(x),
    \end{equation*}
    where $\tilde{h}$ is the zero extension of $h$ to $x_d<0$, and $\cM \tilde{h}$ is the (one-sided) maximal function of $\tilde{h}$ defined as
    \begin{equation*}
        \cM \tilde{h}(t,x) := \sup_{r>0} \frac{1}{r} \int_{x_d}^{x_d+r} |\tilde{h}(t,x',y_d)| dy_d.
    \end{equation*}
    Since $|x_d|^{\theta-d}$ is in the $A_p(\bR)$ Muckenhoupt class of weights, by Hardy–Littlewood theorem for $A_p(\bR)$-weight (see \cite[Example 7.1.7 and Theorem 7.1.9]{G14}), one can see that 
    \begin{equation*}
        \int_{0}^{\infty} |\cM \tilde{h}(t,x',x_d)|^p x_d^{\theta-d} dx_d \leq N \int_{0}^{\infty} |\tilde{h}(t,x',x_d)|^p x_d^{\theta-d} dx_d.
    \end{equation*}
By integrating this inequality with respect to $(t,x')$ and taking expectations, we obtain the desired result.
The lemma is proved.
\end{proof}

\begin{lemma}[Estimation of $D_du$ and $D_xD_du$]
    Let $\gamma\geq\mu$, $p\in[2,\infty)$, $T\in(0,\infty)$, and $\tau\leq T$ be a bounded stopping time.
    Suppose that Assumption \ref{as_coeff} holds, and we assume that
    \begin{equation*}
        d-1+p\left( 1- \frac{1}{p(1-\delta_1)+\delta_1} \right)<\theta < d-1+p.
    \end{equation*}
    Assume that $f\in \bH_{p,\theta}^{\gamma}(\tau)$, $g\in {\bH}_{p,\theta}^{\gamma+1,(1)}(\tau,l_2)$, and $u$ is a $\cD'(\bR^d_+)$-valued predictable function defined on $(0,\tau]$ such that $x_d^{-1} D_du \in \bH_{p,\theta}^{\mu+1}(\tau)$. If $u$ is a solution to \eqref{maineq} with $u_0=0$, then we actually have $x_d^{-1} D_du \in \bH_{p,\theta}^{\gamma+1}(\tau)$ and $D_du \in L_p(C([0,\tau];H_{p,\theta}^{\gamma}))$. Moreover,
    \begin{align} \label{2511132025}
        &\left(\bE \sup_{s\leq \tau}\|D_du(s,\cdot)\|_{H_{p,\theta}^{\gamma}}^p\right)^{1/p} + \|x_d^{-1}D_du\|_{\bH_{p,\theta}^{\gamma+1}(\tau)} + \|D_xD_du\|_{\bH_{p,\theta}^{\gamma}(\tau)} \nonumber
        \\
        &\leq N \left( \|f\|_{\bH_{p,\theta}^{\gamma}(\tau)} + \|g\|_{{\bH}_{p,\theta}^{\gamma+1,(1)}(\tau,l_2)} \right),
    \end{align}
    where $N=N(d,\gamma,p,\theta,\delta_0,\delta_1,T)$.
\end{lemma}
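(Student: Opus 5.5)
The approach is to differentiate the equation in $x_d$ and treat $v:=D_du$ as the solution of a Dirichlet problem. The introduction already hints at this: since $x_d^{-1}D_du\in\bH^{\mu+1}_{p,\theta}(\tau)$, the function $v$ carries the weighted zero-trace information that the Dirichlet theory of \cite{Kry_halfspace} needs. Formally, $v$ satisfies
\begin{equation*}
dv=\left(a^{ij}D_{ij}v+D_df\right)dt+\left(\sigma^{ik}D_iv+D_dg^k\right)dw^k_t,\qquad v(0)=0,
\end{equation*}
in the sense of distributions on $\bR^d_+$. This holds because differentiating the tested identity is legitimate: take $\phi\in C_c^\infty(\bR^d_+)$ and replace it by $-D_d\phi$. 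The data belong to the right spaces. Indeed, $D_df\in\bH^{\gamma-1}_{p,\theta+p}(\tau)$, since by \eqref{equiv_der} and \eqref{eq5062159} we have $\|D_df\|_{H^{\gamma-1}_{p,\theta+p}}\approx\|x_dD_df\|_{H^{\gamma-1}_{p,\theta}}\le N\|f\|_{H^\gamma_{p,\theta}}$. Also $D_dg\in\bH^{\gamma}_{p,\theta}(\tau,l_2)$ because $g\in\bH^{\gamma+1,(1)}_{p,\theta}$, and $\bH^\gamma_{p,\theta}\subset\bH^{\gamma}_{p,\theta+p}$ fails in general, so I will keep the weight convention of \cite{Kry_halfspace}. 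There, solutions lie in $\frH^{\gamma+1}_{p,\theta}$ with $x_d^{-1}v\in\bH^{\gamma+1}_{p,\theta}$, the free term lies in $x_d\,\bH^{\gamma-1}_{p,\theta}$, and the noise term lies in $\bH^{\gamma}_{p,\theta}(l_2)$. Matching indices with $\gamma+1$ in place of the Dirichlet regularity, the data requirement becomes $x_dD_df\in\bH^{\gamma-1}_{p,\theta}$ and $D_dg\in\bH^{\gamma}_{p,\theta}(l_2)$, and both hold.

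The key steps are as follows. First, verify that $v$ lies in the Dirichlet solution class at the low regularity $\mu$. This uses the hypothesis $x_d^{-1}v\in\bH^{\mu+1}_{p,\theta}(\tau)$ together with the equation: the drift $a^{ij}D_{ij}v+D_df$ has to land in $x_d\bH^{\mu-1}_{p,\theta}$, and $D_{ij}v$ is controlled through \eqref{equiv_der} by $x_d^{-1}v\in\bH^{\mu+1}_{p,\theta}$. Second, apply the uniqueness/regularity result of \cite[Theorem 3.3]{Kry_halfspace} under the range \eqref{theta}, which is exactly the admissible range there. This upgrades $x_d^{-1}v$ from $\bH^{\mu+1}_{p,\theta}$ to $\bH^{\gamma+1}_{p,\theta}$ and gives
\begin{equation*}
\|x_d^{-1}v\|_{\bH^{\gamma+1}_{p,\theta}(\tau)}\le N\left(\|x_dD_df\|_{\bH^{\gamma-1}_{p,\theta}(\tau)}+\|D_dg\|_{\bH^{\gamma}_{p,\theta}(\tau,l_2)}\right),
\end{equation*}
whose right-hand side is bounded by that of \eqref{2511132025}. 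If the upgrade is not stated directly, I will obtain it by a bootstrap: solve the Dirichlet problem at level $\gamma$, then use uniqueness at level $\min(\mu,\gamma)=\mu$ to identify the two solutions. Third, $\|D_xD_du\|_{\bH^\gamma_{p,\theta}}\le N\|x_d^{-1}D_du\|_{\bH^{\gamma+1}_{p,\theta}}$ follows from \eqref{equiv_der} and \eqref{eq5062159}, since $D_xv=x_d^{-1}(x_dD_xv)$. Fourth, $D_du\in H^\gamma_{p,\theta}$ follows from Hardy's inequality, Lemma \ref{lem4161836}(iii), because $\|v\|_{H^{\gamma}_{p,\theta}}\le N\|x_d^{-1}v\|_{H^{\gamma}_{p,\theta-p}}$, and the trivial $\|x_d^{-1}v\|_{H^\gamma_{p,\theta}}$ bound gives $v\in\bH^{\gamma+1}_{p,\theta-p}$ with an $x_d$ factor. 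The sup-in-time estimate then follows from \eqref{eq01072253}, or from \eqref{eq2152119} applied to the equation for $v$ with $f$-part $a^{ij}D_{ij}v+D_df\in\bH^{\gamma}_{p,\theta}$-type data. More precisely, $D_{ij}v\in\bH^{\gamma-1}_{p,\theta}$ and $D_df\in\bH^{\gamma-1}_{p,\theta}$, so I expect the $\sup$ to be cleanest in $H^{\gamma-1}_{p,\theta}$. To reach $H^\gamma_{p,\theta}$ I will instead invoke the corresponding sup estimate from \cite{Kry_halfspace}, which yields $\sup_t\|x_d^{-1}v\|_{H^{\gamma}_{p,\theta+p}}$ type control, that is, $\sup_t\|v\|_{H^\gamma_{p,\theta}}$ via \eqref{eq5062159}.

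The main obstacle is the first step: justifying that $v=D_du$ genuinely belongs to the Dirichlet class of \cite{Kry_halfspace}, so that its uniqueness and regularity theorem applies, when only $x_d^{-1}v\in\bH^{\mu+1}_{p,\theta}$ is known. The worry is that the distributional equation for $v$ near the boundary might hide a boundary contribution. My first attempt will be to mollify in $x_d$ with the one-sided kernel of \eqref{eq9051117}, with support in $\{x_d<0\}$ so that $h^{(\varepsilon)}$ only samples the interior. Commuting this mollifier with $D_d$ and the coefficients, which are $x$-independent, produces smooth approximants satisfying the same equation with mollified data, and \eqref{eq9051117} gives uniform bounds. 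I will apply the Dirichlet estimate to these approximants and pass to the limit $\varepsilon\to0$.
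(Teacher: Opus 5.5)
Your proposal is essentially the paper's proof. The paper differentiates in $x_d$ and treats $v=D_du$ as a solution in the Dirichlet class of \cite{Kry_halfspace}, with $x_d^{-1}v\in\bH^{\mu+1}_{p,\theta}(\tau)$, $x_dD_df\in\bH^{\gamma-1}_{p,\theta}(\tau)$ and $D_dg\in\bH^{\gamma}_{p,\theta}(\tau,l_2)$. It then solves at level $\gamma$ by \cite[Theorem 3.3]{Kry_halfspace}, identifies the two solutions by uniqueness at level $\mu$, takes the $\sup_t$ bound from \cite[Theorem 2.11]{Kry_halfspace}, and controls $D_xD_du$ by \eqref{equiv_der}.

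Two small remarks. The one-sided mollification you flag as the ``main obstacle'' is not needed: membership of $v$ in that class is a direct check of weighted norms plus testing the equation against $-D_d\phi$ with $\phi\in C_c^\infty(\bR^d_+)$. Your Hardy-inequality aside in the fourth step is wrong: by \eqref{eq5062159}, $\|v\|_{H^\gamma_{p,\theta}}\approx\|x_d^{-1}v\|_{H^\gamma_{p,\theta+p}}$, which does not follow from $x_d^{-1}v\in H^{\gamma+1}_{p,\theta}$. It is superfluous, though, because Krylov's sup estimate, which you end up invoking, gives $D_du\in L_p(C([0,\tau];H^\gamma_{p,\theta}))$ directly.
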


\begin{proof}
Since by \eqref{equiv_der},
\begin{equation*}
    \|D_xD_du\|_{\bH_{p,\theta}^\gamma(\tau)} \leq N \|x_d^{-1}D_du\|_{\bH_{p,\theta}^{\gamma+1}(\tau)}
\end{equation*}
provided that $x_d^{-1}D_du \in \bH_{p,\theta}^{\gamma+1}(\tau)$, we only need to estimate
\begin{equation*}
    \left(\bE \sup_{s\leq \tau}\|D_du(s,\cdot)\|_{H_{p,\theta}^{\gamma}}^p\right)^{1/p} + \|x_d^{-1}D_du\|_{\bH_{p,\theta}^{\gamma+1}(\tau)}.
\end{equation*}
    Note that $v:=D_du \in \bH_{p,\theta-p}^{\mu+1}(\tau)$ satisfies
    \begin{equation} \label{eq4141844}
        dv = \left( a^{ij}(t)D_{ij} v + D_df \right) dt + \left( \sigma^{ik}(t) D_i v +  D_d g^k \right) dw_t^k, \quad t>0, x\in \bR^d_+
    \end{equation}
    with the initial data $v(0,x)=0$ in the sense of distributions. Roughly, one can regard that $v$ satisfies the equation with the Dirichlet boundary condition $v(t,x',0)=0$.
    We note that $v$, $D_df$, and $D_dg$ satisfy the conditions of \cite[Theorem 3.3]{Kry_halfspace}. Thus, using this theorem, there is a solution $w\in \bH_{p,\theta-p}^{\gamma+1}(\tau)\subset \bH_{p,\theta-p}^{\mu+1}(\tau)$ to \eqref{eq4141844}. Moreover, the uniqueness result in the space $\bH_{p,\theta-p}^{\mu+1}(\tau)$ implies that $v=w\in \bH_{p,\theta-p}^{\gamma+1}(\tau)$. By \cite[Theorems 2.11 and 3.3]{Kry_halfspace},
    \begin{align*}
        &\left(\bE \sup_{s\leq \tau}\|D_du(s,\cdot)\|_{H_{p,\theta}^{\gamma}}^p\right)^{1/p} + \|x_d^{-1}D_du\|_{\bH_{p,\theta}^{\gamma+1}(\tau)}
        \\
        &\leq N \left( \|x_dD_df\|_{\bH_{p,\theta}^{\gamma-1}(\tau)} + \|D_dg\|_{\bH_{p,\theta}^{\gamma}(\tau,l_2)} \right)
        \\
        &\leq N \left( \|f\|_{\bH_{p,\theta}^{\gamma}(\tau)} + \|g\|_{{\bH}_{p,\theta}^{\gamma+1,(1)}(\tau,l_2)} \right).
    \end{align*}
    For the last inequality, we used \eqref{equiv_der}. The proof is completed.
\end{proof}

\begin{lemma}[Estimation of $D_{x'}u$ when $\gamma\geq0$] \label{lem4071423}
Let $\gamma\geq0$, $p\in[2,\infty)$, $T\in(0,\infty)$, and $\tau\leq T$ be a bounded stopping time.
    Suppose that Assumption \ref{as_coeff} holds, and we assume that
    \begin{equation*}
        d-1+p\left( 1- \frac{1}{p(1-\delta_1)+\delta_1} \right)<\theta < d-1+p.
    \end{equation*}
    Assume that $f\in \bH_{p,\theta}^{\gamma}(\tau)$, $g\in {\bH}_{p,\theta}^{\gamma+1,(1)}(\tau,l_2)$, $u\in \bL_{p,\theta}(\tau)$, and $x_d^{-1}D_du \in \bL_{p,\theta}(\tau)$. If $u$ is a solution to \eqref{maineq} with $u_0=0$, then for $m\in\bN_0$ such that $m\leq \gamma$,
\begin{align} \label{eq2511141208}
    &\|D_{x'}^mu\|_{\bL_{p,\theta+mp}(\tau)} + \|D_{x'}^{m+1} u\|_{\bL_{p,\theta+mp}(\tau)} + \|D_{x'}^{m+2} u\|_{\bL_{p,\theta+mp}(\tau)} \nonumber
    \\
    &\leq N\left( \|f\|_{\bH_{p,\theta}^{\gamma}(\tau)} + \|g\|_{{\bH}_{p,\theta}^{\gamma+1,(1)}(\tau,l_2)} \right),
\end{align}
where $N=N(d,\gamma,p,\theta,\delta_0,\delta_1,T)$.
\end{lemma}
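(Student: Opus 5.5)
Since $a^{ij},\sigma^{ik}$ depend only on $(\omega,t)$, tangential differentiation commutes with the operator, and we can estimate $D_{x'}^m u$ by treating $x_d$ as a parameter. The plan is to fix $x_d>0$ and view $u(t,\cdot,x_d)$ as a function of $x'\in\bR^{d-1}$ satisfying a whole-space SPDE in $x'$. The remaining terms involving $D_d$, namely $a^{id}D_{id}u$, $a^{dd}D_{dd}u$ and $\sigma^{dk}D_du$, are moved into the free terms and controlled by the previous lemma (estimation of $D_du$ and $D_xD_du$).

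Concretely, for fixed $x_d$ the function $u(\cdot,\cdot,x_d)$ solves
\begin{equation*}
du=\Big(\sum_{i,j<d}a^{ij}D_{ij}u+F\Big)dt+\Big(\sum_{i<d}\sigma^{ik}D_iu+G^k\Big)dw^k_t,
\end{equation*}
where $F=f+2\sum_{i<d}a^{id}D_{id}u+a^{dd}D_{dd}u$ and $G^k=g^k+\sigma^{dk}D_du$. The reduced pair $(\sum_{i,j<d}a^{ij},\sum_{i<d}\sigma^{ik})$ inherits stochastic parabolicity from Assumption~\ref{as_coeff} by restricting to $\xi=(\xi',0)$. To make this rigorous in the distributional sense, I would mollify in $x_d$ with $h^{(\varepsilon)}$ from the preceding lemma (the one-sided kernel keeps the support inside $\bR^d_+$) and pass to the limit using \eqref{eq9051117}.

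I then apply Krylov's whole-space $L_p$ theory in $\bR^{d-1}$ \cite{Kry99} with zero initial data, with regularity index $m$. This gives, for each $x_d$,
\begin{equation*}
\bE\int_0^\tau\big(\|D_{x'}^{m}u\|^p_{W^2_p(\bR^{d-1})}\big)dt\le N\,\bE\int_0^\tau\big(\|F\|^p_{W^m_p(\bR^{d-1})}+\|G\|^p_{W^{m+1}_p(\bR^{d-1},l_2)}\big)dt,
\end{equation*}
with $N$ depending on $T$ (a lower-order term is absorbed by Gronwall on $[0,T]$). Multiplying by $x_d^{\theta+mp-d}$ and integrating in $x_d$ yields \eqref{eq2511141208}, provided the right-hand side is controlled in $L_p(x_d^{\theta+mp-d}dx_d)$. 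For $f$ and $g$ this holds because, for $m\le\gamma$, the tangential derivatives satisfy $\|x_d^mD_{x'}^mf\|_{L_{p,\theta}}\le N\|f\|_{H^\gamma_{p,\theta}}$ and $\|x_d^mD_{x'}^{m}D_{x'}g\|_{L_{p,\theta}}\le N\|g\|_{H^{\gamma+1,(1)}_{p,\theta}}$, using \eqref{equiv_der} and $\gamma\ge m$. The weight $x_d^{mp}$ is exactly what makes the case $m\ge1$ work.

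The \textbf{main obstacle} is the terms $D_{x'}^mD_{id}u$, $D_{x'}^mD_{dd}u$ and $D_{x'}^{m+1}(\sigma^{dk}D_du)$ in the weighted norm $\bL_{p,\theta+mp}$. Writing $v=D_du$, each of these is $x_d^m$ times at most $m+1$ derivatives of $v$, measured in $L_{p,\theta}$. Since $x_d^{-1}v\in\bH^{\gamma+1}_{p,\theta}(\tau)$, \eqref{eq5062159} and \eqref{equiv_der} give
\begin{equation*}
\|x_d^{m}D_x^{m+1}v\|_{\bL_{p,\theta}}\le N\|v\|_{\bH^{m+1}_{p,\theta-p}}\le N\|x_d^{-1}D_du\|_{\bH^{\gamma+1}_{p,\theta}},
\end{equation*}
using $m+1\le\gamma+1$. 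This last quantity is bounded by \eqref{2511132025} applied with $\mu=0$, which is admissible since $x_d^{-1}D_du\in\bL_{p,\theta}(\tau)$ by assumption. Lower derivatives of $v$ are handled similarly.

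If the rigorous justification that the lower-order norm $\|D_{x'}^mu\|_{\bL_{p,\theta+mp}}$ is finite a priori fails, I would first prove the estimate for the mollified functions $u^{(\varepsilon)}$, with constants uniform in $\varepsilon$ by \eqref{eq9051117}, and then let $\varepsilon\to0$ using Fatou's lemma.
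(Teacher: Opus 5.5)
Your outline matches the paper's proof: a forward one-sided mollification in $x_d$, freezing $x_d$, Krylov's whole-space theory in $(t,x')$ with the $D_d$-terms moved into the free terms, control of those terms by \eqref{2511132025}, weighted integration in $x_d$, and $\varepsilon\to0$. However, the central step does not go through as written.

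You apply Krylov's theory to $u(\cdot,\cdot,x_d)$ with regularity index $m$. That puts the inhomogeneous norms $\|F\|_{W^m_p(\bR^{d-1})}$ and $\|G\|_{W^{m+1}_p(\bR^{d-1},l_2)}$ on the right. After multiplying by $x_d^{\theta+mp-d}$ and integrating, these contain lower-order pieces such as $\int_{\bR^d_+}|f|^p x_d^{\theta+mp-d}\,dx=\|f\|^p_{L_{p,\theta+mp}}$. For $m\geq1$ this is not bounded by $\|f\|_{H^\gamma_{p,\theta}}$: take $f=\phi(\cdot/R)$ with $\phi\in C_c^\infty(\bR^d_+)$, and the ratio grows like $R^m$. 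Your verification only checks the top-order quantities $x_d^mD^m_{x'}f$, $x_d^mD^{m+1}_{x'}g$, $x_d^mD^{m+1}_xD_du$. So the claim that the right-hand side is controlled is false for the inequality you display.

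The repair uses the commutation you noted at the outset. $D^m_{x'}u$ (mollified) solves the same tangential equation with free terms $D^m_{x'}F$ and $D^m_{x'}G$. The zeroth-order $H^2_p(\bR^{d-1})$ estimate for it involves only $\|D^m_{x'}F\|_{L_p}$, $\|D^m_{x'}G\|_{L_p}$ and $\|D^{m+1}_{x'}G\|_{L_p}$. The paper does exactly this with $v=x_d^mD^m_{x'}u$, and with that change your top-order bounds close the argument.

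Three smaller corrections:
\begin{enumerate}
\item The previous lemma must be applied with $\mu=-1$, not $\mu=0$, since the hypothesis $x_d^{-1}D_du\in\bL_{p,\theta}(\tau)$ corresponds to $\mu+1=0$.
\item The bound \eqref{eq9051117} is proved only for $\theta\in(d-1,d-1+p)$, where $x_d^{\theta-d}$ is an $A_p$ weight. To use it at weight $\theta+mp$ you need the pointwise inequality $x_d^m|h^{(\varepsilon)}|\leq (x_d^m|h|)^{(\varepsilon)}$, which holds because $\text{supp}(\varphi)\subset\{x_d<0\}$ forces $y_d>x_d$ in the integral. Alternatively, put $x_d^m$ inside the mollification as the paper does.
\item Fatou's lemma is not the right tool for $\varepsilon\to0$: $D^{m+2}_{x'}u$ is a priori only a distribution, so no a.e.\ convergence is available. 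Use instead the uniform bound, weak compactness in the reflexive space $\bL_{p,\theta+mp}(\tau)$, and convergence in $\cD'(\bR^d_+)$ to identify the limit, as in the paper.
\end{enumerate}
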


\begin{proof}
Let $\varphi\in C_c^\infty(\bR)$ such that $\varphi\geq0$, $\int_{\bR} \varphi(x_d) dx_d=1$, and $\text{supp}(\varphi) \subset \{x_d<0\}$.
For $x_d>0$, $\varepsilon>0$, and any measurable function $h$ defined on $\bR^d_+$, we denote
\begin{equation*}
    h^{(\varepsilon)}(x) = \int_{0}^{\infty} \frac{1}{\varepsilon} \varphi\left(\frac{x_d-y_d}{\varepsilon}\right)h(x',y_d) dy_d.
\end{equation*}
Moreover, one can see that for any $x_d>0$ and $\varphi^\varepsilon(x_d):=\varepsilon^{-1}\varphi(x_d/\varepsilon)$ with $\phi\in C_c^\infty(\bR^{d-1})$, $y\to\varphi^{\varepsilon}(x_d-y_d)\phi(y') \in C_c^\infty(\bR^d_+)$, and thus
\begin{equation*}
    (h,\varphi^{\varepsilon}(x_d-\cdot)\phi(\cdot))_{\bR^d_+} = (h^{(\varepsilon)}(\cdot,x_d),\phi)_{\bR^{d-1}}.
\end{equation*}
In fact, this mollification can be extended to $\cD'(\bR^d_+)$. Moreover, if $h\in L_{p,\theta}$, then for each $x_d>0$ and $\varepsilon>0$,
\begin{align*}
    |(x_d^m D_{x'}^mh,\varphi^{\varepsilon}(x_d-\cdot)\phi(\cdot))_{\bR^d_+}| &\leq N(x_d,\varepsilon) \|h\|_{L_{p,\theta}} \|D_{x'}^m\phi\|_{L_{p'}(\bR^{d-1})}
    \\
    &\leq N(x_d,\varepsilon) \|h\|_{L_{p,\theta}} \|\phi\|_{H_{p'}^{m}(\bR^{d-1})},
\end{align*}
where $1/p+1/p'=1$. This yields $(x_d^mD_{x'}^mh)^{(\varepsilon)}(\cdot,x_d) \in H_p^{-m}(\bR^{d-1})$.
Hence, letting $v:=x_d^mD_{x'}^mu$, for a fixed $x_d>0$,
\begin{align} \label{eq4152213}
    dv^{(\varepsilon)} &= \left( \sum_{i,j<d} a^{ij} D_{ij} v^{(\varepsilon)} + (x_d^mD_{x'}^m f)^{(\varepsilon)} + \sum_{i=1}^d (a^{id}+a^{di}1_{i<d})(x_d^mD_{x'}^mD_{di}u)^{(\varepsilon)} \right) dt \nonumber
    \\
    &\, + \left( \sum_{i=1}^{d-1}\sigma^{ik} D_i v^{(\varepsilon)} + (x_d^mD_{x'}^mg^k)^{(\varepsilon)} + \sigma^{dk}(x_d^mD_{x'}^mD_du)^{(\varepsilon)} \right) dw_t^k,
\end{align}
with $v^{(\varepsilon)}(0,\cdot,x_d) = 0$ in the sense of distributions in $\bR^{d-1}$. Here, all functions in \eqref{eq4152213} are regarded as functions in the variables $(t,x')$.
By applying \cite[Theorems 3.7 and 4.10]{Kry99} with respect to $(t,x')$,
\begin{align} \label{eq4062152}
    &\|v^{(\varepsilon)}\|_{\bL_p(\bR^{d-1},\tau)}^p + \|D_{x'} v^{(\varepsilon)}\|_{\bL_p(\bR^{d-1},\tau)}^p + \|D_{x'}^2 v^{(\varepsilon)}\|_{\bL_p(\bR^{d-1},\tau)}^p \nonumber
    \\
    &\leq N \sum_{i=1}^{d} \|(x_d^mD_{x'}^mD_{di} u)^{(\varepsilon)}\|_{\bL_p(\bR^{d-1},\tau)}^p + N \|(x_d^mD_{x'}^mD_du)^{(\varepsilon)}\|_{\bH_p^{1}(\bR^{d-1},\tau)}^p \nonumber
    \\
    &\quad + N\|(x_d^mD_{x'}^mf)^{(\varepsilon)}\|_{\bL_p(\bR^{d-1},\tau)}^p + N\|(x_d^mD_{x'}^mg)^{(\varepsilon)}\|_{\bL_p(\bR^{d-1},\tau,l_2)}^p \nonumber
    \\
    &\quad+ N\|(x_d^mD_{x'}^{m+1}g)^{(\varepsilon)}\|_{\bL_p(\bR^{d-1},\tau,l_2)}^p.
\end{align}
Here, the spaces are defined by
\begin{equation*}
    \bL_{p}(\bR^{d-1},\tau) := L_p((0,\tau],\cP;L_p(\bR^{d-1})), \quad \bH_{p}^1(\bR^{d-1},\tau) := L_p((0,\tau],\cP;H_p^1(\bR^{d-1})),
\end{equation*}
and
\begin{equation*}
    \bL_p(\bR^{d-1},\tau,l_2) := L_p((0,\tau],\cP;L_p(\bR^{d-1},l_2)).
\end{equation*}
Due to \eqref{2511132025}, the original functions on the right-hand side of \eqref{eq4062152} before mollification belong to $\bL_{p,\theta}(\tau)$, for example, $x_d^mD_{x'}^mD_{di}u \in \bL_{p,\theta}(\tau)$. For $h\in \bL_{p,\theta}(\tau)$ and $x_d,\bar{x}_d\in[a,b]$ such that $0<a<b<\infty$, one can show that
\begin{equation*}
    \left\|h^{(\varepsilon)}(x_d) - h^{(\varepsilon)}(\bar{x}_d)\right\|_{\bL_{p}(\bR^{d-1},\tau)} \leq N(\varepsilon,a,b,\theta)|x_d-\bar{x}_d| \|h\|_{\bL_{p,\theta}(\tau)}.
\end{equation*}
Hence, by replacing $v^{(\varepsilon)}(x_d)$ with $v^{(\varepsilon)}(x_d)-v^{(\varepsilon)}(\bar{x}_d)$ in \eqref{eq4062152}, $x_d\to v^{(\varepsilon)}(x_d)$ is continuous with values in $\bL_{p}(\bR^{d-1},\tau)$. Similarly, $D_{x'}v^{(\varepsilon)}(x_d)$ and $D_{x'}^2v^{(\varepsilon)}(x_d)$ are also continuous. This allows us to multiply both sides of \eqref{eq4062152} by $x_d^{\theta-d}$ and integrate with respect to $x_d$.
Thus, by Fubini's theorem, \eqref{eq9051117}, and \eqref{2511132025},
\begin{align*}
    &\|v^{(\varepsilon)}\|_{\bL_{p,\theta}(\tau)} + \|D_{x'} v^{(\varepsilon)}\|_{\bL_{p,\theta}(\tau)} + \|D_{x'}^2 v^{(\varepsilon)}\|_{\bL_{p,\theta}(\tau)}
    \\
    &\leq N \|x_d^mD_{x'}^{m+1}D_d u\|_{\bL_{p,\theta}(\tau)} + N \|x_d^mD_{x'}^m D_{d} u\|_{\bL_{p,\theta}(\tau)} +  N \|x_d^mD_{x'}^m D_{d}^2 u\|_{\bL_{p,\theta}(\tau)}
     \\
     &\quad+ N\|x_d^mD_{x'}^mf\|_{\bL_{p,\theta}(\tau)} + N \|x_d^mD_{x'}^{m} g\|_{\bL_{p,\theta}(\tau,l_2)} + N \|x_d^mD_{x'}^{m+1} g\|_{\bL_{p,\theta}(\tau,l_2)}
     \\
     &\leq N\|x_d^{-1}D_du\|_{\bH_{p,\theta}^{m+1}(\tau)} + N\|D_du\|_{\bH_{p,\theta}^m(\tau)} + N\|D_d^2u\|_{\bH_{p,\theta}^m(\tau)}
     \\
     &\quad+ N \|f\|_{\bH_{p,\theta}^m(\tau)} + N \|g\|_{{\bH}_{p,\theta}^{m+1,(1)}(\tau,l_2)}
     \\
     &\leq N\left( \|f\|_{\bH_{p,\theta}^{\gamma}(\tau)} + \|g\|_{{\bH}_{p,\theta}^{\gamma+1,(1)}(\tau,l_2)} \right).
\end{align*}
Here, we emphasize that the right-hand side is independent of $\varepsilon$. By Lemma \ref{lem4161836}~$(vi)$, $\bL_{p,\theta}(\tau)$ is reflexive.  This implies that there is a sequence $\varepsilon_n\to0$ such that for each $k=0,1,2$, $D_{x'}^k v^{(\varepsilon_n)}$ converges weakly in $\bL_{p,\theta}(\tau)$ as $\varepsilon_n\to0$. Since for each $(t,\omega)$, $v^{(\varepsilon_n)}(t,\cdot)\to x_d^mD_{x'}^m u$ in $\cD'(\bR^d_+)$ as $\varepsilon_n\to0$, we have
\begin{align*}
    &\|x_d^mD_{x'}^mu\|_{\bL_{p,\theta}(\tau)} + \|x_d^mD_{x'}^{m+1} u\|_{\bL_{p,\theta}(\tau)} + \|x_d^mD_{x'}^{m+2} u\|_{\bL_{p,\theta}(\tau)} 
     \\
     &\leq N\left( \|f\|_{\bH_{p,\theta}^{\gamma}(\tau)} + \|g\|_{{\bH}_{p,\theta}^{\gamma+1,(1)}(\tau,l_2)} \right),
\end{align*}
which yields \eqref{eq2511141208}.
The lemma is proved.
\end{proof}

\begin{lemma}[A priori estimate when $\gamma\in\bN_0$] \label{lem4162028}
    Let the assumptions of Lemma \ref{lem4071423} be satisfied for $\gamma\in\bN_0$. Then we have
        \begin{align} \label{eq2511151744}
    \|u\|_{\frH_{neu}^{\gamma+2}(p,\theta,\tau)} &\leq N\left( \|f\|_{\bH_{p,\theta}^{\gamma}(\tau)} + \|g\|_{{\bH}_{p,\theta}^{\gamma+1,(1)}(\tau,l_2)} \right),
\end{align}
where $N=N(d,\gamma,p,\theta,\delta_0,\delta_1,T)$.
\end{lemma}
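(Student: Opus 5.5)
The norm in \eqref{def_norm} with $u_0=0$ consists of four terms: $\|u\|_{\bH^{\gamma+2,(2)}_{p,\theta}(\tau)}$, $\|x_d^{-1}D_du\|_{\bH^{\gamma+1}_{p,\theta}(\tau)}$, $\|\bD u\|_{\bH^{\gamma}_{p,\theta}(\tau)}$ and $\|\bS u\|_{\bH^{\gamma+1,(1)}_{p,\theta}(\tau,l_2)}$. The plan is to bound each by the right-hand side of \eqref{eq2511151744}.

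\textbf{The normal-derivative terms.} These follow directly from \eqref{2511132025}, applied with $\mu=0$; this is allowed because $x_d^{-1}D_du\in\bL_{p,\theta}(\tau)$ by assumption. It gives $x_d^{-1}D_du\in\bH^{\gamma+1}_{p,\theta}(\tau)$ and bounds $D_xD_du$ in $\bH^\gamma_{p,\theta}(\tau)$. By \eqref{equiv_der}, it also bounds $D_du$ in $\bH^{\gamma}_{p,\theta}(\tau)$, since
\[
\|D_du\|_{H^{\gamma}_{p,\theta}}\le N\|x_dD_du\|_{H^{\gamma}_{p,\theta-p}}\approx N\|x_d^{-1}D_du\|_{H^{\gamma}_{p,\theta+p}}\lesssim\|x_d^{-1}D_du\|_{H^{\gamma+1}_{p,\theta}},
\]
where the weights are rewritten using \eqref{eq5062159}. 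So $D_du$, $D_d^2u$ and $D_{x'}D_du$ are all controlled in $\bH^\gamma_{p,\theta}(\tau)$.

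\textbf{The tangential terms.} The remaining pieces of $\|u\|_{\bH^{\gamma+2,(2)}_{p,\theta}}$ are $u$, $D_{x'}u$ and $D_{x'}^2u$ in $\bH^\gamma_{p,\theta}(\tau)$. Since $\gamma\in\bN_0$, the norm of a function $h$ in $H^\gamma_{p,\theta}$ is equivalent to $\sum_{m\le\gamma}\|x_d^mD_x^mh\|_{L_{p,\theta}}$. I split $D_x^m$ into a pure tangential part $D_{x'}^m$ plus mixed parts containing at least one $D_d$.
\begin{itemize}
\item The mixed parts applied to $h=u,D_{x'}u,D_{x'}^2u$ have the form $x_d^mD_x^{m-1}D_{x'}^{j}D_du$ with $j\le2$. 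These are bounded by $\|D_{x'}^jD_du\|_{H^{m-1}_{p,\theta}}$ with $m-1\le\gamma-1$, hence by $\|D_xD_du\|_{H^\gamma_{p,\theta}}$ plus $\|D_du\|_{H^{\gamma}_{p,\theta}}$, which the previous step already controls. The case $j=2$ loses an $x_d$ factor relative to $D_xD_du\in H^\gamma_{p,\theta}$, but \eqref{equiv_der} lets me trade $x_dD_x$ for one order of regularity, and $m-1<\gamma$ leaves room.
\item The pure tangential parts are $\|x_d^mD_{x'}^{m+j}u\|_{\bL_{p,\theta}}=\|D_{x'}^{m+j}u\|_{\bL_{p,\theta+mp}}$ with $j\in\{0,1,2\}$ and $m\le\gamma$. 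These are exactly what Lemma \ref{lem4071423}, \eqref{eq2511141208}, controls.
\end{itemize}

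\textbf{The $\bD u$ and $\bS u$ terms.} Here $\bD u=a^{ij}D_{ij}u+f$ and $\bS u=\sigma^{ik}D_iu+g$. The coefficients depend only on $(\omega,t)$, so multiplying by them commutes with the spatial norms.
\begin{itemize}
\item $\|\bD u\|_{\bH^\gamma_{p,\theta}}\le N\|D_x^2u\|_{\bH^\gamma_{p,\theta}}+\|f\|_{\bH^\gamma_{p,\theta}}$.
\item $\|\bS u\|_{\bH^{\gamma+1,(1)}_{p,\theta}}$ requires $D_xu$ and $D_x^2u$ in $\bH^\gamma_{p,\theta}$, which is already in hand, together with the $g$ term.
\end{itemize}

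\textbf{Main obstacle.} The delicate part is the bookkeeping of the mixed derivatives in the tangential step. I must check that every mixed derivative, such as $x_d^{m}D_{x'}^{a}D_d^{b}u$ with $b\ge1$ and $a+b\le m+2$, is covered either by the weighted $H^{\gamma+1}_{p,\theta}$ norm of $x_d^{-1}D_du$ (iterating \eqref{equiv_der}) or by Lemma \ref{lem4071423}. I expect this to go through by induction on the number of $D_d$'s.
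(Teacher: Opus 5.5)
\textbf{There is a genuine gap, in the zeroth-order term $\|u\|_{\bH^\gamma_{p,\theta}(\tau)}$ when $\gamma\ge1$.} Your treatment of $D_xD_du$, of the tangential pieces $D_{x'}u$, $D_{x'}^2u$, and of $\bD u$, $\bS u$ is essentially the paper's.

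\begin{itemize}
\item The mixed part of $x_d^mD_x^mu$ is $x_d^mD_x^{m-1}D_du$. That is $m$ factors of $x_d$ against only $m-1$ derivatives of $D_du$, so it is controlled by $\|D_du\|_{H^{m-1}_{p,\theta+p}}\approx\|x_dD_du\|_{H^{m-1}_{p,\theta}}$. It is not controlled by $\|D_du\|_{H^{m-1}_{p,\theta}}$ as you claim; your bound drops one $x_d$ for every $j$.
\item For $j=1,2$ the loss is harmless. Indeed $x_d^mD_x^{m-1}D_{x'}^jD_du=x_d^mD_x^{m}(D_{x'}^{j-1}D_du)$ is bounded by $\|D_{x'}^{j-1}D_du\|_{H^m_{p,\theta}}$ with $m\le\gamma$.
\item For $j=0$ nothing absorbs the extra $x_d$. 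Neither \eqref{2511132025} nor \eqref{eq2511141208} gives $D_du$ with the heavier weight $\theta+p$. The condition $x_d^{-1}D_du\in H^{\gamma+1}_{p,\theta}$ means $D_du\in H^{\gamma+1}_{p,\theta-p}$, which is information near $x_d=0$, not as $x_d\to\infty$.
\end{itemize}

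No induction on the number of $D_d$'s can close this. Take $u=\chi(x_d/R)\sin(R^{-1/2}x_d)\psi(x'/R)$ with $R\gg1$. Every quantity in \eqref{2511132025} and \eqref{eq2511141208} (with $\gamma=1$) is $\lesssim\|u\|_{L_{p,\theta}}$, whereas $\|x_dD_du\|_{L_{p,\theta}}\approx R^{1/2}\|u\|_{L_{p,\theta}}$. Summing such blocks over dyadic $R$ with suitable amplitudes makes all those quantities finite while $u\notin H^1_{p,\theta}$. So no argument using only these spatial quantities can bound $\|u\|_{H^\gamma_{p,\theta}}$.

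\textbf{The missing idea is to use the equation for $u$ itself, as the paper does.} Once $D_xu$ and $D_x^2u$ are bounded in $\bH^\gamma_{p,\theta}(\tau)$, apply \eqref{eq2152119} with $u_0=0$, $\bD u=a^{ij}D_{ij}u+f$ and $\bS u=\sigma^{ik}D_iu+g$. This gives $\|u\|_{\bH^\gamma_{p,\theta}(\tau)}\le N(T)\bigl(\|D_xu\|_{\bH^\gamma_{p,\theta}(\tau)}+\|D_x^2u\|_{\bH^\gamma_{p,\theta}(\tau)}+\|f\|_{\bH^\gamma_{p,\theta}(\tau)}+\|g\|_{\bH^\gamma_{p,\theta}(\tau,l_2)}\bigr)$. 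The time integration in $u(t)=\int_0^t\bD u\,ds+\int_0^t(\bS u)^k\,dw^k_s$ supplies the control at large $x_d$ that spatial interpolation cannot.

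\textbf{Your bound for $D_du$ is also wrong.} The last step $\|x_d^{-1}D_du\|_{H^{\gamma}_{p,\theta+p}}\lesssim\|x_d^{-1}D_du\|_{H^{\gamma+1}_{p,\theta}}$ fails. For functions varying on scale $R$ and supported where $x_d\sim R$, the left side is about $R$ times the right side. Instead take $\|D_du\|_{\bH^\gamma_{p,\theta}(\tau)}\le T^{1/p}\bigl(\bE\sup_{s\le\tau}\|D_du(s,\cdot)\|^p_{H^\gamma_{p,\theta}}\bigr)^{1/p}$ directly from the supremum term of \eqref{2511132025}. Note that \eqref{2511132025} is applied there with $\mu=-1$, not $\mu=0$, since you only know $x_d^{-1}D_du\in\bL_{p,\theta}(\tau)$.
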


\begin{proof}
By \eqref{def_norm} and the assumptions $f\in \bH_{p,\theta}^{\gamma}(\tau)$ and $g\in {\bH}_{p,\theta}^{\gamma+1,(1)}(\tau,l_2)$, it suffices to estimate
\begin{equation*}
    \|u\|_{\bH^{\gamma+2,(2)}_{p,\theta}(\tau)} + \|x_d^{-1}D_du\|_{\bH_{p,\theta}^{\gamma+1}(\tau)}.
\end{equation*}
Since the estimation of $\|x_d^{-1}D_du\|_{\bH_{p,\theta}^{\gamma+1}(\tau)}$ is already obtained in \eqref{2511132025}, we only need to estimate $\|u\|_{\bH^{\gamma+2,(2)}_{p,\theta}(\tau)}$.

First, we consider $D_du$ and $D_xD_du$.
    Note that
        \begin{equation*}
        \|D_d u\|_{\bH_{p,\theta}^{\gamma}(\tau)} \leq N(T) \left(\bE \sup_{s\leq \tau}\|D_du(s,\cdot)\|_{H_{p,\theta}^{\gamma}}^p\right)^{1/p},
    \end{equation*}
    and by \eqref{equiv_der},
    \begin{equation*}
       \|D_xD_du\|_{\bH_{p,\theta}^{\gamma}(\tau)} \leq N \|x_d^{-1}D_du\|_{\bH_{p,\theta}^{\gamma+1}(\tau)}.
    \end{equation*}
    Thus, $D_du$ and $D_xD_du$ can be estimated by \eqref{2511132025}. For $D_{x'}u$, and $D_{x'}^2u$, by \eqref{equiv_der} and \eqref{eq2511141208},
    \begin{align*}
        &\|D_{x'}u\|_{\bH_{p,\theta}^{\gamma}(\tau)} + \|D_{x'}^2u\|_{\bH_{p,\theta}^{\gamma}(\tau)}
        \\
        &\leq N \|D_du\|_{\bH_{p,\theta}^{\gamma}(\tau)} + N \|D_xD_du\|_{\bH_{p,\theta}^{\gamma}(\tau)}
        \\
        &\quad+ N \sum_{m=0}^{\gamma} \left(   \|D_{x'}^mu\|_{\bL_{p,\theta+mp}(\tau)} + \|D_{x'}^{m+1} u\|_{\bL_{p,\theta+mp}(\tau)} + \|D_{x'}^{m+2} u\|_{\bL_{p,\theta+mp}(\tau)}\right)
    \\
    &\leq N\left( \|f\|_{\bH_{p,\theta}^{\gamma}(\tau)} + \|g\|_{\bH_{p,\theta}^{\gamma+1,(1)}(\tau,l_2)} \right).
    \end{align*}
For $u$, by \eqref{eq2152119} and
     \begin{equation*}
        \|u\|_{\bH_{p,\theta}^{\gamma}(\tau)}^p \leq N(T) \bE \sup_{s\leq \tau}\|u(s,\cdot)\|_{H_{p,\theta}^{\gamma}}^p,
    \end{equation*}
    we have
    \begin{align*}
        \|u\|_{\bH_{p,\theta}^{\gamma}(\tau)} &\leq N \left(\|D_xu\|_{\bH_{p,\theta}^{\gamma}(\tau)} + \|D_x^2u\|_{\bH_{p,\theta}^{\gamma}(\tau)}  + \|f\|_{\bH_{p,\theta}^{\gamma}(\tau)} + \|g\|_{\bH_{p,\theta}^{\gamma}(\tau,l_2)}\right).
    \end{align*}
The proof is completed.
\end{proof}

\section{Proof of the main theorems} \label{sec_proof}

In this section, we provide the proofs of the main theorems in Section \ref{sec_res}.

\subsection{Proof of Theorem \ref{thm_main}}

\begin{lemma} \label{lem4162045}
Suppose that $u_0=0$, and $f$ and $g$ are independent of $u$. Then Theorem \ref{thm_main} holds when $\gamma \in \bN_0$.
\end{lemma}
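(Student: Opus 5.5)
The argument follows the classical scheme of a priori estimate, method of continuity, and solvability for a model operator. The a priori estimate is Lemma \ref{lem4162028}, valid for every solution $u\in\frH_{neu}^{\gamma+2}(p,\theta,\tau)$ with $u_0=0$. Such a solution automatically satisfies $u\in\bL_{p,\theta}(\tau)$ and $x_d^{-1}D_du\in\bL_{p,\theta}(\tau)$, since $\gamma\geq0$. This estimate immediately gives uniqueness, so the remaining task is existence.

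For existence, consider the family of operators
\[
a_\lambda^{ij}=\lambda a^{ij}+(1-\lambda)\delta^{ij},\qquad \sigma_\lambda^{ik}=\lambda\sigma^{ik},\qquad \lambda\in[0,1].
\]
We must check that Assumption \ref{as_coeff} holds uniformly in $\lambda$ with the same $\delta_0,\delta_1$. If needed, we shrink $\delta_0$; the quantity $\bar a_\lambda-\delta_1a_\lambda$ is linear-plus-$\lambda^2$ in $\lambda$, with $\lambda^2\alpha\le\lambda\alpha$, so the inequalities persist. The constant in \eqref{eq2511151744} is then uniform in $\lambda$. The map $u\mapsto(\bD u-a^{ij}_\lambda D_{ij}u,\ \bS u-\sigma_\lambda^{ik}D_iu)$ sends $\frH_{neu}^{\gamma+2}(p,\theta,\tau)\cap\{u(0)=0\}$ into $\bH_{p,\theta}^{\gamma}(\tau)\times\bH_{p,\theta}^{\gamma+1,(1)}(\tau,l_2)$. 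Here we use $D_iu\in H^{\gamma+1,(1)}_{p,\theta}$, which follows from $D_xu,D_x^2u\in H^\gamma_{p,\theta}$. The method of continuity then reduces existence to the case $\lambda=0$, namely the equation \eqref{eq_simple}.

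For \eqref{eq_simple}, first take $f\in\bH_0^\infty(\tau)$ and $g\in\overline{\bH}_0^\infty(\tau,l_2)$. Lemmas \ref{lem4162023} and \ref{lem4162024} give a distributional solution $u$ with $u,\,x_d^{-1}D_du\in\bL_{p,\theta}(\tau)$. The key point is to upgrade this $u$ to membership in $\frH_{neu}^{\gamma+2}(p,\theta,\tau)$, after which Lemma \ref{lem4162028} applies.

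I expect this upgrade to be the main obstacle, since the kernel lemmas only control order zero. My plan is as follows.
\begin{itemize}
\item Since the data are smooth, compactly supported in $x$ and bounded, differentiate the representation formula. Tangential derivatives commute with $p_\cN$, so $D_{x'}^ku$ is again given by the same formulas with data $D_{x'}^kf$ and $D_{x'}^kg$. Lemmas \ref{lem4162023} and \ref{lem4162024} then give $D_{x'}^ku\in\bL_{p,\theta}$, and likewise for $x_d^{-1}D_dD_{x'}^ku$.
\item The normal derivative $D_du$ is a Dirichlet-type stochastic convolution with data $D_df$ and $D_dg$, as in the proofs of those lemmas. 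The Dirichlet theory of \cite{Kry_halfspace}, used exactly as in the proof of \eqref{2511132025}, yields $D_du\in\bH^{\gamma+1}_{p,\theta-p}(\tau)$ for any $\gamma$. In particular this gives $x_d^{-1}D_du\in\bH^{\gamma+1}_{p,\theta}$ and $D_xD_du\in\bH^\gamma_{p,\theta}$.
\item The remaining pure tangential second derivatives are controlled via the equation, or directly as in the first item.
\end{itemize}
This shows $u\in\frH_{neu}^{\gamma+2}(p,\theta,\tau)$ with $u_0=0$, $\bD u=\Delta u+f$ and $\bS u=g$. Lemma \ref{lem4162028} then gives the estimate \eqref{eq2511151744}.

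For general $f\in\bH^\gamma_{p,\theta}(\tau)$ and $g\in\bH^{\gamma+1,(1)}_{p,\theta}(\tau,l_2)$, approximate them by such smooth data using the density lemma for stochastic spaces. The estimate shows that the corresponding solutions form a Cauchy sequence in the Banach space $\frH_{neu}^{\gamma+2}(p,\theta,\tau)$. Its limit solves the equation in the sense of distributions, by the same argument as in the completeness lemma.
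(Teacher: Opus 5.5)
Your outline is the paper's own proof: uniqueness and the bound from Lemma \ref{lem4162028}, the method of continuity along $a^{ij}_\lambda,\sigma^{ik}_\lambda$, Lemmas \ref{lem4162023} and \ref{lem4162024} for the Laplacian with smooth data, and then density. Your check that Assumption \ref{as_coeff} survives along the homotopy is a detail the paper leaves implicit.

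Where you diverge is the ``upgrade'' step, and it comes from a misreading. Lemma \ref{lem4162028} is stated under the hypotheses of Lemma \ref{lem4071423}. Those hypotheses are only that $u$ solves the equation with $u_0=0$ and that $u,\ x_d^{-1}D_du\in\bL_{p,\theta}(\tau)$. So it is a regularity statement, not merely an a priori bound for elements already in $\frH_{neu}^{\gamma+2}(p,\theta,\tau)$. The paper applies it directly to the kernel solution to conclude $u\in\frH_{neu}^{\gamma+2}(p,\theta,\tau)$. The lifting of $D_du$ through uniqueness in the Dirichlet class, which is your second bullet, is already done in the proof of \eqref{2511132025}.

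This matters because your hand-made upgrade is not complete for $\gamma\geq1$. To have $D_{x'}^2u\in\bH^\gamma_{p,\theta}(\tau)$ you need $x_d^mD_{x'}^{m+2}u\in\bL_{p,\theta}(\tau)$ for $m\le\gamma$. That is an $L_p$ bound with weight exponent $\theta+mp$, and for $m\geq1$ this lies outside $(d-1,d-1+p)$. Lemmas \ref{lem4162023} and \ref{lem4161852} only work in that range ($A_p$ weights, \eqref{eq3102020} with $\gamma_0>-1$). Your first bullet therefore gives only the unweighted bounds $D_{x'}^ku\in\bL_{p,\theta}(\tau)$, and ``directly as in the first item'' does not suffice.

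Either of the following closes the gap:
\begin{itemize}
\item invoke Lemma \ref{lem4071423}, whose mollification in $x_d$ plus whole-space estimates in $x'$ are designed for exactly these weights $\theta+mp$; or
\item add a Gaussian-decay argument showing qualitative finiteness of these weighted norms for compactly supported data on a bounded time interval, leaving the bound to Lemma \ref{lem4162028}.
\end{itemize}
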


\begin{proof}
    Due to the method of continuity and \eqref{main_est}, which follows from \eqref{eq2511151744}, we focus on the existence result when $a^{ij}(t)D_{ij} = \Delta$ and $\sigma^{ik}(t)\equiv0$. Assume that $f \in \bH_0^\infty(\tau)$ and $g\in \overline{\bH}_0^\infty(\tau,l_2)$. Then combining Lemmas \ref{lem4162023} and \ref{lem4162024}, there is a solution $u$ to \eqref{maineq} such that $u, x_d^{-1}D_du\in \bL_{p,\theta}(\tau)$. By Lemma \ref{lem4162028}, we actually have $u\in \frH_{neu}^{\gamma+2}(p,\theta,\tau)$. For general $f$ and $g$, one can use the approximation argument. The proof is completed.
\end{proof}

\begin{lemma} \label{lem5141719}
Suppose that $u_0=0$, and $f$ and $g$ are independent of $u$. Then Theorem \ref{thm_main} holds for general $\gamma \geq0$.
\end{lemma}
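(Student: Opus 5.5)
The plan is to pass from integer $\gamma$ (Lemma \ref{lem4162045}) to arbitrary real $\gamma\geq0$ by complex interpolation of the solution operator. We do not redo the a priori estimates for fractional $\gamma$.

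Fix $\gamma\geq0$ and set $\gamma_1:=\lfloor\gamma\rfloor$ and $\gamma_2:=\gamma_1+1$. If $\gamma$ is an integer, there is nothing to prove, so assume $\gamma=\kappa\gamma_1+(1-\kappa)\gamma_2$ with $\kappa\in(0,1)$. For $i=1,2$, Lemma \ref{lem4162045} gives a bounded linear solution operator
\[
\cR:(f,g)\mapsto u,\quad \bH_{p,\theta}^{\gamma_i}(\tau)\times\bH_{p,\theta}^{\gamma_i+1,(1)}(\tau,l_2)\to \widetilde{\bH}_{p,\theta}^{\gamma_i+2,(2)}(\tau).
\]
Uniqueness in the larger space $\frH_{neu}^{\gamma_1+2}$ shows that the two operators coincide on the intersection of the data spaces. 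Since $u\mapsto(u,D_xu,D_x^2u,x_d^{-1}D_du)$ identifies $\widetilde{\bH}^{\gamma+2,(2)}_{p,\theta}(\tau)$ with a subspace of products of $\bH^{\gamma}_{p,\theta}$ and $\bH^{\gamma+1}_{p,\theta}$, it is enough to show that each component map $(f,g)\mapsto D_x^ku$ ($k\leq2$) and $(f,g)\mapsto x_d^{-1}D_du$ is bounded into $\bH_{p,\theta}^{\gamma}(\tau)$, respectively $\bH_{p,\theta}^{\gamma+1}(\tau)$.

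The interpolation step runs as follows. The data spaces interpolate correctly:
\[
[\bH_{p,\theta}^{\gamma_1}(\tau),\bH_{p,\theta}^{\gamma_2}(\tau)]_\kappa=\bH_{p,\theta}^{\gamma}(\tau)
\]
by \eqref{eq5071804} together with the standard fact that $[L_p(X_1),L_p(X_2)]_\kappa=L_p([X_1,X_2]_\kappa)$. Likewise,
\[
[\bH_{p,\theta}^{\gamma_1+1,(1)}(\tau,l_2),\bH_{p,\theta}^{\gamma_2+1,(1)}(\tau,l_2)]_\kappa=\bH_{p,\theta}^{\gamma+1,(1)}(\tau,l_2)
\]
by Lemma \ref{lem4161837}(iii) in its $l_2$-valued version; this is where $\theta\in(d-1,d-1+p)$ is used. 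The target spaces for the components are plain $\bH^{s}_{p,\theta}$ spaces, which interpolate by \eqref{eq5071804}. Complex interpolation of operators then gives
\[
\|D_x^ku\|_{\bH^\gamma_{p,\theta}(\tau)}+\|x_d^{-1}D_du\|_{\bH^{\gamma+1}_{p,\theta}(\tau)}\leq N(\|f\|_{\bH^\gamma_{p,\theta}(\tau)}+\|g\|_{\bH^{\gamma+1,(1)}_{p,\theta}(\tau,l_2)}),
\]
first for data in the intersection and then for all data by density. The latter uses Lemma \ref{lem4161836}(i) and the density lemma for $\overline{\bH}_0^\infty$.

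To finish, existence for general data follows by approximation: approximate the data by elements of the intersection (for instance $\bH^\infty_0$-type functions), obtain Cauchy solutions in $\frH_{neu}^{\gamma+2}$, and pass to the limit using completeness of $\frH_{neu}^{\gamma+2}(p,\theta,\tau)$. Since $u_0=0$, $\bD u=a^{ij}D_{ij}u+f$ and $\bS u=\sigma^{ik}D_iu+g$ are controlled by the above. Uniqueness follows because a solution in $\frH^{\gamma+2}_{neu}\subset\frH^{\gamma_1+2}_{neu}$ is unique by the integer case.

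The main obstacle is justifying the interpolation of the data space for $g$, that is, of $\bH^{\gamma+1,(1)}_{p,\theta}(\tau,l_2)$. This requires the $l_2$-valued analogue of Lemma \ref{lem4161837}(iii), whose proof needs the trace and extension machinery in vector-valued form. If that is problematic, I would try instead to reduce by the operator $\cL_{b,c}$: apply Lemma \ref{lem4161837}(ii) with a tangential/time-independent lift to shift $\gamma$. That fails to commute with the normal structure, however, so interpolation is the first thing I would try.
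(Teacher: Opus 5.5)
Your proposal is correct and follows the paper's proof. You interpolate the solution map between $\lfloor\gamma\rfloor$ and $\lfloor\gamma\rfloor+1$ using \eqref{eq5071804}, the $l_2$-valued form of \eqref{eq5112341} (which the paper also uses without comment) and $L_p$-interpolation, with uniqueness inherited from $\frH^{2}_{neu}$; the only differences are cosmetic. You work componentwise on the target, so only the easy inclusion is needed there, and your final approximation step is unnecessary, because the interpolated operator is the restriction of the $\lfloor\gamma\rfloor$-solution operator and so already yields a distributional solution.
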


\begin{proof}
Since $\frH_{neu}^{\gamma+2}(p,\theta,\tau) \subset \frH_{neu}^{2}(p,\theta,\tau)$, the uniqueness result follows from Lemma \ref{lem4162045}. Thus, we only prove the existence of solution and \eqref{main_est}.

When $\gamma\in\bN_0$, given $(f,g) \in \bH_{p,\theta}^\gamma(\tau)\times {\bH}_{p,\theta}^{\gamma+1,(1)}(\tau,l_2)$, we define $\cT$ to be the mapping $(f,g)\to (u,x_d^{-1}D_du)$, where $u\in \frH_{neu}^{\gamma+2}(p,\theta,\tau)$ is a solution to \eqref{maineq}. Then by Lemma \ref{lem4162045}, 
\begin{align} \label{eq5271335}
    \cT: \bH_{p,\theta}^\gamma(\tau)\times {\bH}_{p,\theta}^{\gamma+1,(1)}(\tau,l_2) \to \bH_{p,\theta}^{\gamma+2,(2)}(\tau) \times \bH_{p,\theta}^{\gamma+1}(\tau)
\end{align}
is a well defined linear operator. Here, we remark that the operator can be taken to be independent of $\gamma$ due to the uniqueness result. 

Let $\gamma \geq 0$ be non-integer. For $n := \lfloor \gamma \rfloor$, take $\kappa\in (0,1)$ so that $\gamma = \kappa n + (1-\kappa)(n+1)$. By \eqref{eq5071804}, \eqref{eq5112341}, and \cite[Theorem 2.2.6]{HNVW16},
\begin{eqnarray*}
&[\bH^{n}_{p,\theta}(\tau), \bH^{n+1}_{p,\theta}(\tau)]_{\kappa} = \bH^{\gamma}_{p,\theta}(\tau),&
\\
&[\bH^{n+1,(1)}_{p,\theta}(\tau,l_2), \bH^{n+2,(1)}_{p,\theta}(\tau,l_2)]_{\kappa} = \bH^{\gamma+1,(1)}_{p,\theta}(\tau,l_2)&
\\
&[\bH^{n+2,(2)}_{p,\theta}(\tau), \bH^{n+3,(2)}_{p,\theta}(\tau)]_{\kappa} = \bH^{\gamma+2,(2)}_{p,\theta}(\tau), \quad [\bH^{n+1}_{p,\theta}(\tau), \bH^{n+2}_{p,\theta}(\tau)]_{\kappa} = \bH^{\gamma+1}_{p,\theta}(\tau).&
\end{eqnarray*}
Now we use \eqref{eq5112341} and the complex interpolation of operators to conclude that $\cT$ extends to a linear operator for general $\gamma\geq0$.
Thus, to conclude \eqref{main_est} as well as the solvability result, one just needs to show that $u$ is a solution to \eqref{maineq} in the sense of distributions. Indeed, since
\begin{equation*}
    \bH_{p,\theta}^\gamma(\tau)\times {\bH}_{p,\theta}^{\gamma+1,(1)}(\tau,l_2) \subset \bH_{p,\theta}^n(\tau)\times {\bH}_{p,\theta}^{n+1,(1)}(\tau,l_2)
\end{equation*}
for $n\in\bN_0$ so that $n\leq \gamma$, the definition of $\cT$ in \eqref{eq5271335} with the case $\gamma=n$ yields that the first component of $\cT (f,g)$, say $u \in \bH_{p,\theta}^{n+2,(2)}(\tau)$, is a solution to \eqref{maineq} in the sense of distributions.
The lemma is proved.
\end{proof}

\begin{lemma} \label{lem4201603}
Suppose that $u_0=0$, and $f$ and $g$ are independent of $u$. Then Theorem \ref{thm_main} holds for $\gamma <0$.
\end{lemma}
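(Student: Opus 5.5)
We reduce the case $\gamma<0$ to the already established case of nonnegative regularity by conjugating with the degenerate operator $\cL_{b,c}$ of \eqref{eq5090021}. Fix $m\in\bN$ with $\gamma+2m\geq0$. By Lemma \ref{lem4161837} $(ii)$ (and Lemma \ref{lem4282020}), for suitable $b$ and large $c$ the operator $\cL:=\cL_{b,c}$ is an isomorphism $H^{\gamma+2,(n)}_{p,\theta}\to H^{\gamma,(n)}_{p,\theta}$ for $n=0,1,2$, also in the $l_2$-valued setting. The key algebraic point is that $\cL$ involves only $x$ and has $t$-independent coefficients. Hence it commutes with $a^{ij}(t)D_{ij}$ and $\sigma^{ik}(t)D_i$ up to lower-order commutators. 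It does not commute with them exactly, because of the factors $x_d^2$ and $x_dD_d$.

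\textbf{Uniqueness.} Let $u\in\frH_{neu}^{\gamma+2}(p,\theta,\tau)$ solve \eqref{maineq} with zero data. We would rather avoid commutators altogether, so we argue by duality. Alternatively, we observe that tangential and normal structure let us reduce uniqueness to the a priori estimate. The plan is to first prove the a priori estimate \eqref{main_est} for $\gamma<0$ directly, by repeating Section \ref{sec_apriori}:
\begin{itemize}
\item The normal-derivative estimate \eqref{2511132025} is already valid for every real $\gamma\geq\mu$, since it rests on \cite[Theorem 3.3]{Kry_halfspace}, which holds for all real regularity.
\item It remains to control $D_{x'}u$ and $D_{x'}^2u$ in $\bH^\gamma_{p,\theta}$.
\end{itemize}
For these, note that $D_{x'}$ commutes with the equation. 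We therefore apply $(1-\Delta_{x'})^{\gamma/2}$-type tangential lifting or, more simply, the isomorphism $\cL^{m}$. Since $D_{x'}$ commutes with $\cL$ and with the coefficients, $v:=\cL^{-m}u$ is a natural candidate. The commutator $[\cL,a^{ij}D_{ij}]$ contains terms like $x_dD_dD_{ij}$ and $D_{ij}$, which are of the same order in the weighted scale. This is precisely what makes direct conjugation problematic.

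\textbf{Existence (preferred approach).} We therefore plan to use approximation plus the a priori estimate. Take $f_n\in\bH_0^\infty(\tau)$ and $g_n\in\overline{\bH}_0^\infty(\tau,l_2)$ converging to $f,g$ in $\bH^\gamma_{p,\theta}(\tau)\times\bH^{\gamma+1,(1)}_{p,\theta}(\tau,l_2)$, using the density lemma for stochastic spaces. Lemma \ref{lem5141719} produces solutions $u_n\in\frH_{neu}^{2}(p,\theta,\tau)\subset\frH_{neu}^{\gamma+2}(p,\theta,\tau)$. If
\begin{equation*}
\|u\|_{\frH_{neu}^{\gamma+2}}\leq N\bigl(\|\bD u\|_{\bH^\gamma_{p,\theta}(\tau)}+\|\bS u\|_{\bH^{\gamma+1,(1)}_{p,\theta}(\tau,l_2)}\bigr)
\end{equation*}
holds for $\gamma<0$, then $(u_n)$ is Cauchy. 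Its limit solves the equation by completeness of $\frH_{neu}^{\gamma+2}$, and the same estimate gives uniqueness.

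\textbf{A priori estimate (main obstacle).} For the estimate, we would prove the tangential bounds via the mollification argument of Lemma \ref{lem4071423} applied to $(1-\Delta_{x'})^{-k}u$ with $2k\geq-\gamma$. This operator commutes with the equation and with the boundary condition, so $\tilde u:=(1-\Delta_{x'})^{-k}u$ has data of regularity $\gamma+2k\geq0$ in the tangential directions. The step I expect to be hardest is transferring weighted regularity between $u$ and $\tilde u$, because $H^\gamma_{p,\theta}$ norms are not tangential-only. The plan is to use the characterization \eqref{defH} dyadically in $x_d$. On each strip, the whole-space estimate of \cite[Theorem 4.10]{Kry99} holds in $H_p^{\gamma}$ for any real $\gamma$. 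Localizing with $\zeta(e^{-n}x_d)$ generates commutator terms of lower order. These are absorbed using the interpolation inequality (as in \eqref{eq5131329}) together with the already-controlled $D_du$ and $x_d^{-1}D_du$ from \eqref{2511132025}. Summing over $n$ with weights $e^{n\theta}$ then yields \eqref{main_est}.
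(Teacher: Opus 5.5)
\textbf{There is a genuine gap.} Your proposal rests on an a priori estimate in $\frH_{neu}^{\gamma+2}(p,\theta,\tau)$ for $\gamma<0$, bounded by $f,g$. (You wrote $\bD u,\bS u$, which is not what the Cauchy argument needs.) The only nontrivial part of that estimate, control of $D_{x'}u$ and $D_{x'}^2u$ in $\bH^\gamma_{p,\theta}(\tau)$, is never supplied, and both mechanisms you suggest fail.

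\emph{Tangential lifting.} The operator $(1-\Delta_{x'})^{-k}$ smooths only in $x'$. If $f=\phi(x')\psi(x_d)$ with $\psi$ a distribution of negative order, then $(1-\Delta_{x'})^{-k}f\notin\bL_{p,\theta}(\tau)$. So the slicing argument of Lemma \ref{lem4071423} cannot be run for $\tilde u$, since it relies on \eqref{eq9051117} for $L_{p,\theta}$-data uniformly in the $x_d$-mollification. Moreover, $(1-\Delta_{x'})^{-k}$ is not dilation invariant: after the rescaling in \eqref{defH} it becomes $(1-e^{-2m}\Delta_{x'})^{-k}$. Hence it does not shift the scale $H^\gamma_{p,\theta}$ and gives you no way back from $\tilde u$ to $u$.

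\emph{Dyadic localization.} This fails for a reason specific to the Neumann problem. The commutators of $\zeta(e^{-n}x_d)$ with the operators are $(a^{id}+a^{di})e^{-n}\zeta'D_iu+a^{dd}e^{-2n}\zeta''u$ and $\sigma^{dk}e^{-n}\zeta'u$. After summation with the weights $e^{n\theta}$ they produce $x_d^{-2}u$, $x_d^{-1}u$ and $x_d^{-1}D_{x'}u$ in $H^\gamma_{p,\theta}$-type norms. These are Dirichlet-type quantities and are infinite for a typical Neumann solution, e.g.\ $u=\phi(x')$ near $x_d=0$, since $\theta<d-1+p$. The estimate \eqref{2511132025} controls only the terms containing $D_du$. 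So these commutators are not lower order and cannot be absorbed.

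\textbf{The route you set aside is the one that works.} Your objection that $[\cL,a^{ij}D_{ij}]$ is of the same order is mistaken. Take $\cL=x_d^2\Delta-c$ and let $v\in\frH_{neu}^{\gamma+4}(p,\theta,\tau)$ be the solution with data $\cL^{-1}f$, $\cL^{-1}g$. Then $\tilde v=\cL v\in\frH_{neu}^{\gamma+2}(p,\theta,\tau)$ solves the equation with $f,g$ plus two extra terms:
\begin{itemize}
\item extra drift $-2x_d(a^{id}+a^{di})D_i\Delta v-2a^{dd}\Delta v\in\bH^{\gamma+1}_{p,\theta}(\tau)$;
\item extra noise $-2x_d\sigma^{dk}\Delta v\in\bH^{\gamma+2,(1)}_{p,\theta}(\tau,l_2)$, by \eqref{equiv_der}.
\end{itemize}
Both are one order better than the data spaces of the $\gamma$-problem. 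Subtract the solution $w\in\frH_{neu}^{\gamma+3}(p,\theta,\tau)$ of the problem with these data, giving $u=\tilde v-w$. Both auxiliary problems have regularity index at least $\gamma+1$. So induction on $n$ with $\gamma\in[-n,-n+1)$, starting from Lemma \ref{lem5141719}, yields existence and \eqref{main_est}.

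For uniqueness no commutator appears at all. Applying \eqref{2511132025} with zero data gives $D_du=0$, so $u$ solves an equation with only tangential derivatives, and these commute exactly with $\cL$. Then $\cL^{-m}u$ with $2m+\gamma\geq2$ solves the same equation at nonnegative regularity. Krylov's whole-space uniqueness, applied slice-wise after mollifying in $x_d$, gives $u=0$.
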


\begin{proof}
We first prove the existence result and the estimate \eqref{main_est}.
We proceed by induction on $n\in\bN_0$ such that $\gamma\in[-n,-n+1)$. When $n=0$, the case $\gamma\in[0,1)$ is already proved due to Lemma \ref{lem5141719}. Suppose that the lemma holds true for $n=0,1,\dots, m-1$ with $m\in\bN$. Then our goal is to prove the lemma when $n=m$.

By Lemmas \ref{lem4282020} and \ref{lem4161837} $(ii)$, there is $c=c(d,p,\theta,\gamma)>0$ such that
    \begin{equation*}
        \cL u:= x_d^2 \Delta u -cu
    \end{equation*}
    is a bounded isomorphism from $\bH_{p,\theta_0}^{\gamma_0+2,(n_0)}(\tau)$ onto $\bH_{p,\theta_0}^{\gamma_0,(n_0)}(\tau)$, for any choice of $\gamma_0 \in \{\gamma,\gamma+1,\gamma+2\}$, $\theta_0\in\{\theta,\theta-p\}$, and $n_0\in \{0,1,2\}$.
 Then we get
    \begin{equation*}
        \cL^{-1}f \in \bH_{p,\theta}^{\gamma+2}(\tau), \quad \cL^{-1}g \in \bH_{p,\theta}^{\gamma+3,(1)}(\tau,l_2).
    \end{equation*}
By the induction hypothesis, there is $v\in \frH_{neu}^{\gamma+4}(p,\theta,\tau)$, which is a solution to
\begin{equation*}
    dv= \left( a^{ij}D_{ij} v + \cL^{-1}f \right) dt + \left( \sigma^{ik} D_i v +  \cL^{-1}g^k \right) dw_t^k
\end{equation*}
with the zero initial data.
Then one can check that $\tilde{v}:=\cL v\in \frH_{neu}^{\gamma+2}(p,\theta,\tau)$ satisfies
\begin{align*}
    d\tilde{v} &= \left( a^{ij}D_{ij} \tilde{v} - 2x_d(a^{id}+a^{di})D_i\Delta v - 2a^{dd}\Delta v + f \right) dt 
    \\
    &\quad + \left( \sigma^{ik} D_i \tilde{v} -2x_d\sigma^{dk}\Delta v + g^k \right) dw_t^k
\end{align*}
with the zero initial condition, and
\begin{equation} \label{eq4170033}
    \|v\|_{\frH_{neu}^{\gamma+4}(p,\theta,\tau)} \approx \|\tilde{v}\|_{\frH_{neu}^{\gamma+2}(p,\theta,\tau)}.
\end{equation}
Since 
\begin{equation*}
    - 2x_d(a^{id}+a^{di})D_i\Delta v - 2a^{dd}\Delta v \in \bH_{p,\theta}^{\gamma+1}(\tau)
\end{equation*}
and
\begin{equation*}
    -2x_d\sigma^{dk}\Delta v \in{\bH}_{p,\theta}^{\gamma+2,(1)}(\tau,l_2),
\end{equation*}
there is $w\in \frH_{neu}^{\gamma+3}(p,\theta,\tau)$ such that
\begin{align*}
    dw &= \left( a^{ij}D_{ij} w - 2x_d(a^{id}+a^{di})D_i\Delta v - 2a^{dd}\Delta v \right) dt 
    \\
    &\quad + \left( \sigma^{ik} D_i w -2x_d\sigma^{dk}\Delta v \right) dw_t^k
\end{align*}
with the zero initial data $w(0,\cdot)=0$.
Now $u := \tilde{v}-w$ is the desired solution to \eqref{maineq} with $u_0=0$. Moreover, by \eqref{eq4170033},
\begin{align*}
    \|u\|_{\frH_{neu}^{\gamma+2}(p,\theta,\tau)} &\leq N \left(\|\tilde{v}\|_{\frH_{neu}^{\gamma+2}(p,\theta,\tau)} + \|w\|_{\frH_{neu}^{\gamma+3}(p,\theta,\tau)}\right)
    \\
    &\leq N\left( \| \cL^{-1}f\|_{\bH_{p,\theta}^{\gamma+2}(\tau)} + \|\cL^{-1}g\|_{{\bH}_{p,\theta}^{\gamma+3,(1)}(\tau,l_2)} \right)
    \\
    &\quad+ N \left(\| x_d(a^{id}+a^{di})D_i\Delta v + a^{dd}\Delta v\|_{\bH_{p,\theta}^{\gamma+1}(\tau)} + \|x_d\sigma^{dk}\Delta v\|_{{\bH}_{p,\theta}^{\gamma+2,(1)}(\tau,l_2)} \right)
    \\
    &\leq N\left( \|\cL^{-1}f\|_{\bH_{p,\theta}^{\gamma+2}(\tau)} + \|\cL^{-1}g\|_{{\bH}_{p,\theta}^{\gamma+3,(1)}(\tau,l_2)} \right)
    \\
    &\leq N\left(\|f\|_{\bH_{p,\theta}^{\gamma}(\tau)} + \|g\|_{{\bH}_{p,\theta}^{\gamma+1,(1)}(\tau,l_2)} \right).
\end{align*}
Hence, we obtain \eqref{main_est}.

Now we consider the uniqueness of solution. Let $u\in \frH_{neu}^{\gamma+2}(p,\theta,\tau)$ be a solution to \eqref{maineq} with $u_0=0, f=0$, and $g=0$. By \eqref{2511132025}, we get $D_du=0$, which implies that $u$ satisfies
\begin{equation*}
    du= \sum_{i,j<d} a^{ij}D_{ij}u dt + \sum_{i<d} \sigma^{ik} D_i u dw_t^k.
\end{equation*}
Let $m\in\bN$ such that $2m+\gamma\geq2$. We can choose $c=c(d,p,\theta,\gamma,m)>0$ so that the $m$-th power $\cL^m$ of the operator $\cL$ considered in the existence part is an isomorphism on the given space.
Since $\cL$ commutes with $D_{i}$ when $i<d$, $\cL^{-m}u$ satisfies
\begin{equation} \label{eq9071428}
    d(\cL^{-m}u)= \sum_{i,j<d} a^{ij}D_{ij}(\cL^{-m}u) dt + \sum_{i<d} \sigma^{ik} D_i (\cL^{-m}u) dw_t^k
\end{equation}
with $\cL^{-m}u(0,\cdot)=0$.
As in the proof of Lemma \ref{lem4071423}, if necessary, we may first mollify in the $x_d$-direction. Then for each fixed $x_d$, the mollified function can be viewed as satisfying \eqref{eq9071428} on $\bR^{d-1}$.
Since $2m+\gamma\geq 2$ and $\cL^{-m}u\in \frH_{neu}^{2m+\gamma}(p,\theta,\tau)$, we can apply the uniqueness result \cite[Theorem 4.10]{Kry99} to obtain that $\cL^{-m}u=0$, which leads to $u=0$.
The lemma is proved.
\end{proof}

We next turn to the initial value problem. 

\begin{lemma} \label{lem5141725}
    Let $\gamma\in\bR$, $p\in[2,\infty)$, $T\in(0,\infty)$, $\theta\in(d-1,d-1+p)$, and $\tau\leq T$ be a bounded stopping time. Then for any $u_0\in U_{p,\theta}^{\gamma+2}$, there is a unique solution $u\in \frH_{neu}^{\gamma+2}(p,\theta,\tau)$ to
    \begin{equation} \label{eq9060105}
        \begin{cases}
du= \Delta u  dt, \quad t>0, x\in \bR^d_+,
\\
D_{d}u|_{x_d=0}=0,
\\
u(0,x)=u_0.
\end{cases}
    \end{equation}
    Moreover, we have
    \begin{align} \label{eq5141723}
    \|u\|_{\frH_{neu}^{\gamma+2}(p,\theta,\tau)} &\leq N\|u_0\|_{U_{p,\theta}^{\gamma+2}},
    \end{align}
where $N=N(d,\gamma,p,\theta,T)$.
\end{lemma}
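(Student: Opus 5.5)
Uniqueness is immediate: the difference of two solutions solves \eqref{maineq} with $a^{ij}=\delta^{ij}$, $\sigma\equiv0$, $u_0=0$, $f=g=0$, so it vanishes by Lemmas \ref{lem4162045}, \ref{lem5141719}, \ref{lem4201603}. Existence and \eqref{eq5141723} are deterministic problems for fixed $\omega$, and I would obtain them by a trace-space argument. The interpolation result \cite[Theorem 2.2.10]{HNVW16} reduces everything to a nonrandom $u_0\in X:=(\widetilde H^{\gamma+2,(2)}_{p,\theta},H^\gamma_{p,\theta})_{1/p,p}$; I then take $p$-th moments and use $\tau\le T$. For such $u_0$, the characterization of real interpolation as a trace space (\cite{W95,LV20}) gives a function $w$ on $(0,\infty)$ with $w(0)=u_0$ and
\[
\|w\|_{L_p(\bR_+;\widetilde H^{\gamma+2,(2)}_{p,\theta})}+\|\partial_tw\|_{L_p(\bR_+;H^{\gamma}_{p,\theta})}\le N\|u_0\|_{X}.
\]
Multiplying by a cutoff $\chi(t)$ with $\chi=1$ near $0$ and $\chi=0$ for $t\ge T+1$, I may assume $w$ is supported in $[0,T+1]$. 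The function $w$ is $\cF_0$-measurable in $\omega$ when $u_0$ is, because it is obtained from $u_0$ by a fixed bounded linear operator. Hence it is predictable, and $w\in\frH^{\gamma+2}_{neu}(p,\theta,\tau)$ with $dw=\partial_twdt$.

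I look for the solution as $u=w+z$, where $z$ solves $dz=(\Delta z+f)dt$ with $f:=\Delta w-\partial_tw$, $z(0)=0$, and $g=0$. Since $D_x^2w\in\bH^\gamma_{p,\theta}(\tau)$, we have $f\in\bH^\gamma_{p,\theta}(\tau)$ with norm at most $N\|u_0\|_{U^{\gamma+2}_{p,\theta}}$. The zero-initial-data results, Lemmas \ref{lem4162045}, \ref{lem5141719}, \ref{lem4201603}, applied with $a^{ij}=\delta^{ij}$ and $\sigma=0$, so that $\delta_1=1$ and the whole range $\theta\in(d-1,d-1+p)$ is admissible, give $z\in\frH^{\gamma+2}_{neu}(p,\theta,\tau)$ with $\|z\|\le N\|f\|_{\bH^\gamma_{p,\theta}(\tau)}$. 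Then $u=w+z$ satisfies $du=\Delta u\,dt$ in the sense of distributions, $u(0)=u_0$, and the triangle inequality gives \eqref{eq5141723}. The Neumann condition is encoded in the space, since $x_d^{-1}D_dw$ and $x_d^{-1}D_dz$ both lie in $\bH^{\gamma+1}_{p,\theta}(\tau)$.

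The main obstacle is justifying the trace-space characterization for the pair $(\widetilde H^{\gamma+2,(2)}_{p,\theta},H^\gamma_{p,\theta})$, which is an abstract pair with continuous embedding. This is the standard fact that $(X_0,X_1)_{1/p,p}$ equals the trace space of $W^{1}_p(\bR_+;X_1)\cap L_p(\bR_+;X_0)$ for any interpolation couple, so no specific structure of the spaces is needed. Concretely, I would construct $w$ from the $K$-functional: choose a near-optimal decomposition $u_0=a(t)+b(t)$ with $a(t)\in X_0$ and $b(t)\in X_1$ at scale $t$, and set
\[
w(t)=\frac1t\int_t^{2t}a(s)\,ds,
\]
for which the required bounds are the textbook estimates. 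If that step causes trouble, I would instead try to show that $\Delta$ with the weighted Neumann condition generates an analytic semigroup on $H^\gamma_{p,\theta}$ with domain $\widetilde H^{\gamma+2,(2)}_{p,\theta}$, and take $w(t)=e^{t\Delta}u_0$. That route is heavier, so it is only the fallback.
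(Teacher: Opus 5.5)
Your proposal is correct and follows the paper's proof. The paper also takes an extension $v\in\frH_{neu}^{\gamma+2}(p,\theta,\tau)$ with $dv=F\,dt$ and $v(0)=u_0$; it cites \cite[Theorem 1.6]{CLSW26} at the point where you invoke the trace characterization of $(X_0,X_1)_{1/p,p}$. It then solves the zero-initial-data problem with forcing $\Delta v-F$ via Lemmas \ref{lem5141719} and \ref{lem4201603}, adds the two functions, and obtains uniqueness from the same zero-data lemmas.

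One small correction: the $K$-functional construction with near-optimal decompositions is not linear in $u_0$, so your linearity argument for $\cF_0$-measurability of $w$ does not apply as stated. Either use a genuinely linear extension operator, or first treat simple $\cF_0$-measurable $u_0$ and pass to the limit using uniqueness, linearity of the solution map, and \eqref{eq5141723}.
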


\begin{proof}
The uniqueness follows directly from Lemmas \ref{lem5141719} and \ref{lem4201603}. Thus, we only need to prove the existence of a solution.

    By the extension theorem (see e.g. \cite[Theorem 1.6]{CLSW26}), there is $v\in \frH_{neu}^{\gamma+2}(p,\theta,\tau)$ such that $dv=F dt$ with $v(0,\cdot)=u_0$, where $F\in \bH_{p,\theta}^{\gamma}(\tau)$. Moreover, we have
    \begin{equation} \label{eq5141721}
         \|v\|_{\frH_{neu}^{\gamma+2}(p,\theta,\tau)} \leq N\|u_0\|_{U_{p,\theta}^{\gamma+2}}.
    \end{equation}
    We remark that although the result in \cite[Theorem 1.6]{CLSW26} is proved for deterministic spaces, the same proof applies to the stochastic setting as well. By Lemmas \ref{lem5141719} and \ref{lem4201603}, there is $w\in \frH_{neu}^{\gamma+2}(p,\theta,\tau)$ satisfying
    \begin{equation*}
        dw=(\Delta w + \Delta v-F)dt
    \end{equation*}
    with $w(0,\cdot)=0$. Moreover, by \eqref{eq5141721},
    \begin{align*}
        \|w\|_{\frH_{neu}^{\gamma+2}(p,\theta,\tau)} \leq N \| \Delta v-F\|_{\bH_{p,\theta}^{\gamma}(\tau)} \leq N \|v\|_{\frH_{neu}^{\gamma+2}(p,\theta,\tau)} \leq N\|u_0\|_{U_{p,\theta}^{\gamma+2}}.
    \end{align*}
    Notice that $u:=v+w$ is a solution to \eqref{eq9060105} satisfying \eqref{eq5141723}.
    The lemma is proved.
\end{proof}

\begin{proof}[Proof of Theorem \ref{thm_main}]

\textbf{1.} Linear case: $f$ and $g$ are independent of $u$.

This case can be deduced by combining Lemmas \ref{lem4162045}-\ref{lem5141725}.

\textbf{2.} Semi-linear case when $\tau=T$ is a constant.

We first prove the existence result. By the linear case, we can define 
$u^0 \in \frH_{neu}^{\gamma+2}(p,\theta,T)$ as the solution to \eqref{maineq} with the inhomogeneous terms $f(0)$ and $g(0)$, which are independent of $u$.
Similarly, for $n\geq0$, we define $u^{n+1}$ inductively as the solution to
\begin{equation*}
du^{n+1}=\left(a^{ij}(t)D_{ij}u^{n+1}+f(u^n)\right)dt + \left(\sigma^{ik}(t) D_iu^{n+1} + g^k(u^n) \right) dw_t^k 
\end{equation*}
with the initial condition $u^{n+1}(0,\cdot)=u_0$.
Thus, $(u^{n+1}-u^n)(0,\cdot)=0$ and
\begin{align*}
d(u^{n+1}-u^n)&=\left(a^{ij}(t)D_{ij}(u^{n+1}-u^n)+ f(u^{n})-f(u^{n-1})\right)dt 
\\
&\quad+ \sum_{k=1}^\infty\left(\sigma^{ik}(t)D_i(u^{n+1}-u^{n}) + g^k(u^n)-g^k(u^{n-1})\right)dw_t^k.
\end{align*}
Here, we denote $u^{-1}:=0$.
Then by the linear case and Assumption \ref{as_semi}, for each $t\leq T$,
\begin{align} \label{eq4171440}
&\|u^{n+1} - u^n\|^p_{\frH_{neu}^{\gamma+2}(p,\theta,t)} \nonumber
\\
&\leq N\left( \|f(u^n)-f(u^{n-1})\|^p_{\bH_{p,\theta}^{\gamma}(t)}+ \| g(u^n)-g(u^{n-1})\|^p_{{\bH}_{p,\theta}^{\gamma+1,(1)}(t,l_2)} \right) \nonumber
\\
&\leq N \varepsilon \left(\|x_d^{-1}D_d(u^n-u^{n-1})\|^p_{\bH_{p,\theta}^{\gamma+1}(t)} + \|D_x^2(u^n-u^{n-1})\|^p_{\bH_{p,\theta}^{\gamma}(t)} \right) \nonumber
\\
&\quad+ N_\varepsilon \|u^n-u^{n-1}\|^p_{\bH_{p,\theta}^{\gamma}(t)} \nonumber
\\
&\leq N\varepsilon \|u^n-u^{n-1}\|^p_{\frH_{neu}^{\gamma+2}(p,\theta,t)} + N_\varepsilon \left[\int_0^t \|u^n-u^{n-1}\|_{\frH_{neu}^{\gamma+2}(p,\theta,s)}^p ds\right],
\end{align}
where $N_\varepsilon$ depends also on $\varepsilon$, but it is independent of $n$.
For the last inequality above, we used
\begin{align*}
\|u\|_{\bH_{p,\theta}^{\gamma}(t)}^p \leq \int_0^t \left( \bE \sup_{r\leq s} \|u(r,\cdot)\|_{H_{p,\theta}^{\gamma}}^p \right) ds \leq N \int_0^t \|u\|_{\frH_{neu}^{\gamma+2}(p,\theta,s)}^p ds,
\end{align*}
which follows from \eqref{eq01072253}.
By iterating this procedure in $n$, for any $t\leq T$,
\begin{align*}
&\|u^{n+1}-u^n\|^p_{\frH_{neu}^{\gamma+2}(p,\theta,t)} \leq (N\varepsilon)^n \|u^1-u^0\|^p_{\frH_{neu}^{\gamma+2}(p,\theta,t)} 
\\
&\quad+ \sum_{k=1}^n \binom{n}{k} (N\varepsilon)^{n-k} N_\varepsilon^k \left[\int_0^t \frac{(t-s)^{k-1}}{(k-1)!} \|u^1-u^0\|_{\frH_{neu}^{\gamma+2}(p,\theta,s)}^p ds\right],
\end{align*}
where $\binom{n}{k}$ denotes the standard binomial coefficient.
Hence, we get
\begin{align*}
\|u^{n+1}-u^{n}\|^p_{\frH_{neu}^{\gamma+2}(p,\theta,T)}  &\leq \left((2N\varepsilon)^n \sup_{k\geq0} \frac{(N_\varepsilon T/\varepsilon)^{k}}{k!} \right) \|u^1-u^0\|_{\frH_{neu}^{\gamma+2}(p,\theta,T)}^p.
\end{align*}
By taking a sufficiently small $\varepsilon>0$ such that $2N\varepsilon<1/2$, since the supremum above is finite,
\begin{equation*}
    \sum_{n=1}^\infty \|u^{n+1}-u^{n}\|_{\frH_{neu}^{\gamma+2}(p,\theta,T)} < \infty.
\end{equation*}
Thus, we conclude that $u^n$ is a Cauchy sequence in $\frH_{neu}^{\gamma+2}(p,\theta,T)$, and its limit, say $u$, is a solution to \eqref{maineq}.

Next, we prove the uniqueness of solution.
 Suppose that $u$ and $v$ are solutions to \eqref{maineq}. Then $w:=u-v$ satisfies
\begin{equation*}
dw=\left( a^{ij}(t)D_{ij}w + f(u)-f(v) \right) dt + \left( \sigma^{ik}(t)D_iw + g^k(u) - g^k(v) \right) dw_t^k
\end{equation*}
with the zero initial condition.
Similar to \eqref{eq4171440}, one can obtain that for $\varepsilon>0$ and $t\leq T$,
\begin{align*}
    \|w\|^p_{\frH_{neu}^{\gamma+2}(p,\theta,t)} \leq N\varepsilon \|w\|^p_{\frH_{neu}^{\gamma+2}(p,\theta,t)} + N_\varepsilon \int_0^t \|w\|_{\frH_{neu}^{\gamma+2}(p,\theta,s)}^p ds.
\end{align*}
 By taking sufficiently small $\varepsilon>0$ and applying Gronwall's lemma, we get $w=0$, which proves the uniqueness result.  

Lastly, we deal with estimate \eqref{main_est}. 
Let $u\in \frH_{neu}^{\gamma+2}(p,\theta,T)$ be the solution to \eqref{maineq}. We also consider $u^0\in \frH_{neu}^{\gamma+2}(p,\theta,T)$, which is a solution to \eqref{maineq} with the inhomogeneous terms $f(0)$ and $g(0)$. As in \eqref{eq4171440}, for any $t\leq T$,
\begin{align*}
    \|u-u^{0}\|^p_{\frH_{neu}^{\gamma+2}(p,\theta,t)} &\leq N \left( \|f(u)-f(0)\|_{\bH_{p,\theta}^{\gamma}(t)}^p + \|g(u)-g(0)\|_{{\bH}_{p,\theta}^{\gamma+1,(1)}(t,l_2)}^p \right) \nonumber
\\
&\leq N\varepsilon \|u\|^p_{\frH_{neu}^{\gamma+2}(p,\theta,t)} + N_\varepsilon \int_0^t \|u\|_{\frH_{neu}^{\gamma+2}(p,\theta,s)}^p ds.
\end{align*}
Due to the linear case,
\begin{align*}
    \|u^{0}\|^p_{\frH_{neu}^{\gamma+2}(p,\theta,t)} \leq N\left(\|u_0\|_{U_{p,\theta}^{\gamma+2}}^p + \|f(0)\|_{\bH_{p,\theta}^\gamma(T)}^p + \|g(0)\|_{\bH_{p,\theta}^{\gamma+1,(1)}(T,l_2)}^p \right),
\end{align*}
which yields that
\begin{align*}
\|u\|^p_{\frH_{neu}^{\gamma+2}(p,\theta,t)} 
&\leq  N \int_0^t \|u\|^p_{\frH_{neu}^{\gamma+2}(p,\theta,s)} ds
\\
&\quad+ N \|u_0\|_{U_{p,\theta}^{\gamma+2}}^p + N \|f(0)\|_{\bH_{p,\theta}^\gamma(T)}^p + N \|g(0)\|_{\bH_{p,\theta}^{\gamma+1,(1)}(T,l_2)}^p,
\end{align*}
for any $t\leq T$.
Again by Gronwall's lemma, we obtain the desired estimate.

\textbf{3.} Semi-linear case for general $\tau\leq T$. 

We first consider the existence result and estimate \eqref{main_est}. 
Let
\begin{equation*}
    \bar{f}(t,x,u):=1_{t\leq \tau}f(t,x,u), \quad \bar{g}(t,x,u):=1_{t\leq \tau}g(t,x,u).
\end{equation*}
Then one can easily check that $\bar{f}$ and $\bar{g}$ still satisfy Assumption \ref{as_semi} with $\tau=T$. Thus, by the case $\tau=T$, there exists a solution $u\in \frH_{neu}^{\gamma+2}(p,\theta,T)$ to equation \eqref{maineq} with $\bar{f}$, $\bar{g}$, and $T$, in place of $f$, $g$, and $\tau$, respectively. Since $\bar{f}(u)=f(u)$ and $\bar{g}(u)=g(u)$ for $t\leq \tau$, we see that $u\in \frH_{neu}^{\gamma+2}(p,\theta,T)$ is a solution to equation \eqref{maineq}.
Thus, the restriction of $u$ to $[0,\tau]$ belongs to $\frH_{neu}^{\gamma+2}(p,\theta,\tau)$ and satisfies \eqref{maineq} on $(0,\tau]$.
Moreover, the estimate easily follows from the relation $\|u\|_{ \frH_{neu}^{\gamma+2}(p,\theta,\tau)} \leq  \|u\|_{\frH_{neu}^{\gamma+2}(p,\theta,T)}$.

Next we prove the uniqueness result. 
Suppose that $u\in \frH_{neu}^{\gamma+2}(p,\theta,\tau)$ is a solution to \eqref{maineq}.
 By the case when $\tau=T$, there is a solution $\tilde{u}\in \frH_{neu}^{\gamma+2}(p,\theta,T)$ to
 \begin{equation} \label{eq4171718}
     d\tilde{u}=\left(a^{ij}(t)D_{ij}\tilde{u}+\bar{f}(u)\right)dt + \left( \sigma^{ik}(t)D_i\tilde{u} + \bar{g}^k(u)\right) dw_t^k
 \end{equation}
 with the initial condition $\tilde{u}(0,\cdot)=u_0$.
Here, we remark that we introduced $\bar{f}(u)$ and $\bar{g}(u)$, not $\bar{f}(\tilde{u})$ and $\bar{g}(\tilde{u})$. Since $\tilde{u}-u$ satisfies
\begin{equation*}
    d(\tilde{u}-u)=\left(a^{ij}(t)D_{ij}(\tilde{u}-u) \right)dt + \left( \sigma^{ik}(t)D_i(\tilde{u}-u) \right) dw_t^k
\end{equation*}
with the zero initial condition in $(0,\tau]$, the uniqueness result of the linear case leads to $u=\tilde{u}$ in $(0,\tau]$. Therefore, in equation \eqref{eq4171718}, we can replace $\bar{f}(u)$ and $\bar{g}(u)$ by $\bar{f}(\tilde{u})$ and $\bar{g}(\tilde{u})$, respectively. That is, $\tilde{u}$ is a solution to the semi-linear equation \eqref{maineq} in $\frH_{neu}^{\gamma+2}(p,\theta,T)$. Therefore, the uniqueness result for general $\tau$ follows from the case $\tau=T$.
The proof of the theorem is completed.
\end{proof}

\subsection{Proof of Sobolev-H\"older embedding results} \label{sec_embed}

In this section, we aim to prove Theorems \ref{thm_bdry} and \ref{thm_int}.

\begin{proof}[Proof of Theorem \ref{thm_bdry}]
We may assume that $\gamma\leq0$.
    Let $du = Fdt + G^k dw_t^k$,
    where $F\in \bH_{p,\theta}^{\gamma}(\tau)$, $G\in \bH_{p,\theta}^{\gamma+1,(1)}(\tau,l_2)$. For a distribution $v\in \cD'(\bR^d_+)$, we write $v_R:=I_Rv$. Then obviously,
    \begin{equation*}
        du_R=F_R dt + G_R^kdw_t^k,
    \end{equation*}
    with $u_R(0,\cdot) = u_{0R}$. Note that by \eqref{defH}, the operator $I_{2R}$ is bounded from $H_{p,\theta}^{\gamma',(n)}$ to $H_{p,\theta_0}^{\gamma',(n)}$ if $\gamma'\leq n$. This, \eqref{eq5121127}, and the relation $I_R v=I_R I_{2R} v$ lead to
    \begin{eqnarray*}
        u_R \in F_{p,2}^{\gamma+2}(\bR^d_+,x_d^{\theta_0-d}), \quad F_R \in F_{p,2}^{\gamma}(\bR^d_+,x_d^{\theta_0-d}), \quad G_R \in F_{p,2}^{\gamma+1}(\bR^d_+,x_d^{\theta_0-d}, l_2),
    \end{eqnarray*}
    for each $t\leq \tau$, where $F_{p,2}^{\gamma+1}(\bR^d_+,x_d^{\theta_0-d}, l_2)$ is the natural extension to the $l_2$-valued function space. For the initial data $u_{0R}$, using \eqref{eq5121127}, the real interpolation of operators (see e.g. \cite[Theorem C.3.3]{HNVW16}), and
    \begin{align*}
        (\widetilde{H}_{p,\theta}^{\gamma+2,(2)},H_{p,\theta}^{\gamma})_{1/p,p} \subset ({H}_{p,\theta}^{\gamma+2,(2)},H_{p,\theta}^{\gamma})_{1/p,p},
    \end{align*}
    one can deduce that for each $\omega\in \Omega$,
    \begin{align*}
        u_{0R} \in (F_{p,2}^{\gamma+2}(\bR^d_+,x_d^{\theta_0-d}),F_{p,2}^{\gamma}(\bR^d_+,x_d^{\theta_0-d}))_{1/p,p}.
    \end{align*}
By \cite[Proposition 5.6]{LMV18}, there is an extension operator $\cE$ from $F_{p,2}^{\gamma'}(\bR^d_+,x_d^{\theta_0-d})$ to $F_{p,2}^{\gamma'}(\bR^d,|x_d|^{\theta_0-d})$. We note that if $\gamma'$ belongs to a certain range, for example, $\gamma'\in[\gamma,\gamma+2]$, $\cE$ can be chosen as a universal operator independent of $\gamma'$. In particular, due to \cite[(5.1)]{LMV18},
\begin{equation*}
    d(\cE u_R)=(\cE F_R) dt + (\cE G_R)^kdw_t^k, \quad \cE u_R(0,\cdot) = \cE u_{0R},
\end{equation*}
where for $t\leq \tau$,
        \begin{eqnarray*}
        &\cE u_R \in F_{p,2}^{\gamma+2}(\bR^d,|x_d|^{\theta_0-d}), \quad \cE F_R \in F_{p,2}^{\gamma}(\bR^d,|x_d|^{\theta_0-d}),&
        \\
        &\cE G_R \in F_{p,2}^{\gamma+1}(\bR^d,|x_d|^{\theta_0-d}, l_2).&
    \end{eqnarray*}
    Moreover, by the real interpolation of operators and \cite[Proposition 6.1]{MV12},
    \begin{align*}
        \cE u_{0R} \in (F_{p,2}^{\gamma+2}(\bR^d,|x_d|^{\theta_0-d}),F_{p,2}^{\gamma}(\bR^d,|x_d|^{\theta_0-d}))_{1/p,p} = B_{p,p}^{\gamma+2-2/p}(\bR^d,|x_d|^{\theta_0-d}).
    \end{align*}
    By \cite[Theorem 2.3]{R25}, when $\theta_0>d$, the weighted spaces are embedded into the corresponding unweighted spaces:
\begin{eqnarray*}
    &\cE u_R \in L_p((0,\tau],\cP;H_{p}^{\gamma+2-\frac{\theta_0-d}{p}}(\bR^d)), \quad \cE u_{0R} \in L_p(\Omega,\cF_0;B_{p,p}^{\gamma+2-\frac{2}{p}-\frac{\theta_0-d}{p}}(\bR^d)),&
    \\
    &\cE F_R \in L_p((0,\tau],\cP;H_{p}^{\gamma-\frac{\theta_0-d}{p}}(\bR^d)), \quad \cE G_R \in L_p((0,\tau],\cP;H_{p}^{\gamma+1-\frac{\theta_0-d}{p}}(\bR^d,l_2)).&
\end{eqnarray*}
When $\theta_0=d$, these relations follow immediately.
Thus, by \cite[Theorem 7.2]{Kry99},
\begin{align*}
    \bE \|\cE u_R\|_{C^{\alpha-1/p}\left([0,\tau];H_p^{\gamma+2-\frac{\theta_0-d}{p}-2\beta}(\bR^d)\right)}^p <\infty.
\end{align*}
We note that the conclusion of \cite{Kry99} remains valid when the initial data space is replaced by the corresponding optimal Besov trace space.
It remains to apply the standard Sobolev-H\"older embedding theorem in space.
The proof of the first part, \eqref{eq5251945}, is completed.

Now we consider the global estimate \eqref{eq5271711}.
Let us denote
\begin{equation*}
    du=Fdt+G^kdw_t^k,
\end{equation*}
where $F\in \bL_{p,\theta}(\tau)$ and $G\in {\bH}_{p,\theta}^{1,(1)}(\tau,l_2)$. Unlike in the proof of \eqref{eq5251945}, using Lemma \ref{lem5141224}, we obtain the following directly without using the operator $I_R$:
        \begin{eqnarray*}
        &\cE u \in F_{p,2}^{2}(\bR^d,|x_d|^{\theta-d}), \quad \cE F \in F_{p,2}^{0}(\bR^d,|x_d|^{\theta-d}),&
        \\
        &\cE G \in F_{p,2}^{1}(\bR^d,|x_d|^{\theta-d}, l_2), \quad \cE u_{0} \in B_{p,p}^{2-2/p}(\bR^d,|x_d|^{\theta-d}).&
    \end{eqnarray*}
Thus, one just needs to repeat the above argument with $\gamma=0$. The theorem is proved.
\end{proof}

Lastly, we deal with the interior regularity result.

\begin{proof}[Proof of Theorem \ref{thm_int}]
Let $du = Fdt + G^k dw_t^k$, 
    where $F\in \bH_{p,\theta}^{\gamma}(\tau)$, $G\in \bH_{p,\theta}^{\gamma+1,(1)}(\tau,l_2)$. For $\chi\in C_c^\infty(\bR^d_+)$, we have
    \begin{equation*}
        d(\chi u) = \chi Fdt + \chi G^k dw_t^k, \quad \chi u(0,\cdot)= \chi u_0,
    \end{equation*}
    and by \cite[Remark 1.12]{Khalf}, one can find that
    \begin{eqnarray*}
        &\chi u\in \bH_{p}^{\gamma+2}(\tau), \quad \chi F \in \bH_{p}^{\gamma}(\tau), \quad \chi G \in \bH_{p}^{\gamma+1}(\tau,l_2),
    \end{eqnarray*}
    where $\bH_{p}^{\gamma}(\tau) := L_p((0,\tau], \cP; H_p^{\gamma})$ and $\bH_{p}^{\gamma}(\tau,l_2) := L_p((0,\tau], \cP; H_p^{\gamma}(l_2))$. In other words, the functions can be viewed as elements of $\cD'(\bR^d)$. For the initial data, using
    \begin{equation*}
         (\widetilde{\fH}_{p,\theta}^{\gamma+2,(2)},\fH_{p,\theta}^{\gamma})_{1/p,p} \subset ({\fH}_{p,\theta}^{\gamma+2,(2)},\fH_{p,\theta}^{\gamma})_{1/p,p},
    \end{equation*}
    we have
    \begin{align*}
        \chi u_0 \in ({\fH}_{p}^{\gamma+2},\fH_{p}^{\gamma})_{1/p,p} = \fB_{p,p}^{\gamma+2-2/p},
    \end{align*}
    where $\fH_{p}^{\gamma}:= L_p(\Omega,\cF_0;H_{p}^{\gamma})$ and $\fB_{p,p}^{\gamma}:= L_p(\Omega,\cF_0;B_{p,p}^{\gamma})$. Thus, one can apply \cite[Theorem 7.2]{Kry99} to obtain that
    \begin{equation*}
        \bE \|\chi u\|_{C^{\alpha-1/p}\left([0,\tau];H_p^{\gamma+2-2\beta}(\bR^d)\right)}^p <\infty.
    \end{equation*}
    Now \eqref{eq5251946} follows from the standard Sobolev-H\"older embedding theorem in space. The theorem is proved.
\end{proof}

\appendix

\section*{Acknowledgement} \pdfbookmark{Acknowledgement}{Acknowledgement}
 
The author is grateful to Sungbae Jeong for finding several typos in the earlier version of the article.




\end{document}